\RequirePackage{ifpdf} 
\ifpdf
\documentclass[10pt,pdftex]{amsart}
\else
\documentclass[10pt,dvips]{amsart}
\fi

\ifpdf
\fi
\usepackage[english]{babel}

\usepackage{geometry}
\usepackage{amsmath, amssymb, cancel}

\usepackage{graphics}
\usepackage{subfigure}
\usepackage{graphicx}
\usepackage{epsfig}
\usepackage{amsmath,amssymb,amsthm}
\usepackage{epstopdf}

\usepackage{framed} 
\usepackage{booktabs}

\newtheorem{thm}{Theorem}[section]
\newtheorem{cor}[thm]{Corollary}
\newtheorem{lem}[thm]{Lemma}

\newtheorem{rem}[thm]{Remark}

\usepackage{amsaddr}

\numberwithin{equation}{section}

\usepackage[
  hidelinks,
  bookmarks,
  backref=false,
  plainpages=false
]{hyperref}

\usepackage{placeins}

\begin{document}
\newcommand{\BX}{{\bf X}}
\newcommand{\cv}{{\cal V}}
\newcommand{\cW}{{\cal W}}
\newcommand{\co}{{\cal O}}

\renewcommand{\theequation}{\thesection.\arabic{equation}}
\def\@eqnnum{{\reset@font\rm (\theequation)}}

\def\abstract{
\advance \rightskip by 10mm
\advance \leftskip by 10mm
\vspace{-0.8em}
\noindent
\small{\bf Abstract.}
}
\def\endabstract{\par\normalsize\rm}

\def\Xint#1{\mathchoice
{\XXint\displaystyle\textstyle{#1}}%
{\XXint\textstyle\scriptstyle{#1}}%
{\XXint\scriptstyle\scriptscriptstyle{#1}}%
{\XXint\scriptscriptstyle\scriptscriptstyle{#1}}%
\!\int}
\def\XXint#1#2#3{{\setbox0=\hbox{$#1{#2#3}{\int}$}
\vcenter{\hbox{$#2#3$}}\kern-.5\wd0}}
\def\ddashint{\Xint=}
\def\dashint{\Xint-}

\def\a{\alpha}
\def\b{\beta}
\def\d{\delta}\def\D{\Delta}
\def\e{\epsilon}
\def\g{\gamma}\def\G{\Gamma}
\def\k{\kappa}
\def\lam{\lambda}\def\Lam{\Lambda}
\renewcommand\o{\omega}\renewcommand\O{\Omega}
\def\s{\sigma}\def\S{\Sigma}
\renewcommand\t{\theta}\def\vt{\vartheta}
\newcommand{\vphi}{\varphi}
\def\z{\zeta}

\newcommand{\tsigma}{\tilde{\s}}
\newcommand{\tbsigma}{\tilde{\bsigma}}
\def\te{\tilde{\e}}
\def\tu{\tilde{u}}

\newcommand{\bchi}{\mbox{\boldmath$\chi$}}
\newcommand{\bdelta}{\mbox{\boldmath$\delta$}}
\newcommand{\bepsilon}{\mbox{\boldmath$\epsilon$}}
\newcommand{\bfeta}{\mbox{\boldmath$\eta$}}
\newcommand{\bgamma}{\mbox{\boldmath$\gamma$}}
\newcommand{\bomega}{\mbox{\boldmath$\omega$}}
\newcommand{\bvphi}{\mbox{\boldmath$\varphi$}}
\newcommand{\bphi}{\mbox{\boldmath$\phi$}}
\newcommand{\bPhi}{\mbox{\boldmath$\Phi$}}
\newcommand{\bpsi}{\mbox{\boldmath$\psi$}}
\newcommand{\bPsi}{\mbox{\boldmath$\Psi$}}
\newcommand{\bsigma}{\mbox{\boldmath$\sigma$}}
\newcommand{\btau}{\mbox{\boldmath$\tau$}}
\newcommand{\bxi}{\mbox{\boldmath$\xi$}}
\newcommand{\brho}{\mbox{\boldmath$\rho$}}
\newcommand{\bbeta}{\mbox{\boldmath$\beta$}}
\newcommand{\bzeta}{\mbox{\boldmath$\zeta$}}

\def\bk{\boldsymbol{\kappa}}
\def\bmu{\boldsymbol\mu}
\def\bxi{\boldsymbol{\xi}}
\def\bz{\boldsymbol{\zeta}}

\def\ba{{\bf a}}
\def\bb{{\bf b}}
\def\bc{{\bf c}}
\def\be{{\bf e}}
\def\bff{{\bf f}}
\def\bg{{\bf g}}
\def\bn{{\bf n}}
\def\bp{{\bf p}}
\def\bq{{\bf q}}
\def\bs{{\bf s}}
\def\bt{{\bf t}}
\def\bu{{\bf u}}
\def\bv{{\bf v}}
\def\bw{{\bf w}}
\def\bx{{\bf x}}
\def\by{{\bf y}}
\def\bzz{{\bf z}}

\def\bD{{\bf D}}
\def\bE{{\bf E}}
\def\bF{{\bf F}}
\def\bH{{\bf H}}
\def\bJ{{\bf J}}
\def\bV{{\bf V}}
\def\bU{{\bf U}}
\def\bW{{\bf W}}
\def\bX{{\bf X}}
\def\bY{{\bf Y}}

\def\cA{{\cal A}}
\def\cC{{\cal C}}
\def\cD{{\cal D}}
\def\cE{{\cal E}}
\def\cF{{\cal F}}
\def\cG{{\cal G}}
\def\cI{{\cal I}}
\def\cJ{{\cal J}}
\def\cK{{\cal K}}
\def\cL{{\cal L}}
\def\cO{{\cal O}}
\def\cP{{\cal P}}
\def\cQ{{\cal Q}}
\def\cR{{\cal R}}
\def\cS{{\cal \Sigma}}
\def\cT{{\cal T}}
\def\cU{{\cal U}}
\def\cV{{\cal V}}

\def\scT{{_\cT}}
\def\sD{{_D}}
\def\sE{{_E}}
\def\sF{{_F}}
\def\sFz{{_{F_z}}}
\def\sK{{_K}}
\def\sI{{_I}}
\def\sb{{_b}}
\def\sN{{_N}}

\def\curl{{{\bf curl} \ }}
\def\rot{{\mbox{rot}\ }}
\def\BPI{{\bf \Pi}}

\def\cth{\cT_h}
\def\ctH{\cT_H}

\def\tJ{\tilde{\J}}

\def\hK{\widehat{K}}
\def\hx{\widehat{x}}
\def\hy{\widehat{y}}
\def\bhv{\widehat{\bv}}

\def\l{\ell}
\def\bl{\boldsymbol{\ell}}
\def\col{\colon}
\def\f12{\frac12}
\def\dfrac{\displaystyle\frac}
\def\dint{\displaystyle\int}
\def\nab{\nabla}
\def\p{\partial}
\def\sm{\setminus}
\def\dsum{\displaystyle\sum}
\newcommand{\pp}[2]{\frac{\partial {#1}}{\partial {#2}}}
\def\bzero{{\bf 0}}

\def\divv{\nab\cdot}
\def\divx{\nab_x\cdot}
\def\divtx{\nab_{t,x}\cdot}
\def\nabx{\nab_x}

\newcommand{\grad}{\nabla}
\newcommand{\curlt}{{\nabla \times}}
\newcommand{\gperp}{\nabla^{\perp}}
\newcommand{\gradt}{\nabla\cdot}

\def\forallqq{\quad\forall\,}
\def\aph{A^{1/2}}
\def\amh{A^{-1/2}}

\def\osc{{\rm osc \, }}

\def\Im{{\rm Im}}
\newcommand{\tr}{{\rm tr}}
\def\divvr{{\rm div}}
\def\curllr{{\rm curl}}
\def\curll{{\rm curl}}
\def\curl{{\bf curl}}
\newcommand{\bgrad}{{\bf grad}}
\newcommand\diam{\mathrm{diam\,}}
\renewcommand\Im{\mathrm{Im\,}}
\def\Span{\mbox{Span}}
\def\supp{\mbox{supp\,}}
\newcommand{\trace}{{\rm trace}}

\newcommand{\tri}{|\!|\!|}
\newcommand{\ljump}{\lbrack\!\lbrack}
\newcommand{\rjump}{\rbrack\!\rbrack}
\newcommand{\bdm}{\begin{displaymath}}
\newcommand{\edm}{\end{displaymath}}
\newcommand{\beq}{\begin{equation}}
\newcommand{\eeq}{\end{equation}}
\newcommand{\beqa}{\begin{eqnarray}}
\newcommand{\eeqa}{\end{eqnarray}}
\newcommand{\beqas}{\begin{eqnarray*}}
\newcommand{\eeqas}{\end{eqnarray*}}
\newcommand{\ul}{\underline}
\newcommand{\wh}{\widehat}
\newcommand{\la}{\langle}
\newcommand{\ra}{\rangle}

\newcommand{\Lt}{L^2(\Omega)}
\newcommand{\Lts}{L^2(\Omega)^2}
\newcommand{\Ltc}{L^2(\Omega)^3}
\newcommand{\Ho}{H^1(\Omega)}
\newcommand{\Hoh}{H^1(\wh{\Omega})}
\newcommand{\Hoi}{H^1(\Omega_i)}
\newcommand{\Hos}{H^1(\Omega)^2}
\newcommand{\Hoc}{H^1(\Omega)^3}
\newcommand{\Hoch}{H^1(\wh{\Omega})^3}
\newcommand{\Hoci}{H^1(\Omega_i)^3}
\newcommand{\Hoz}{H^1_0(\Omega)}
\newcommand{\Ht}{H^2(\Omega)}
\newcommand{\Hti}{H^2(\Omega_i)}
\newcommand{\Hts}{H^2(\Omega)^2}
\newcommand{\Htc}{H^2(\Omega)^3}
\newcommand{\Htz}{H^0(\Omega)}
\newcommand{\Hh}{H^{1/2}(\Gamma)}
\newcommand{\Hhi}{H^{1/2}(\Gamma_i)}
\newcommand{\Hmh}{H^{-1/2}(\Gamma)}
\newcommand{\Hdiv}{H(\divvr;\,\Omega)}
\newcommand{\Hdivh}{H(\divv;\,\wh \Omega)}
\newcommand{\hcurl}{H(\curl\,A;\,\Omega)}
\newcommand{\Hcurl}{H(\curll\,A;\,\Omega)}
\newcommand{\Hcrl}{H(\curll\,;\,\Omega)}
\newcommand{\hcrl}{H(\curl\,;\,\Omega)}
\newcommand{\Hcrlh}{H(\curll\,;\,\wh\Omega)}
\newcommand{\hcrlh}{H(\curl\,;\,\wh\Omega)}
\newcommand{\Wdiv}{\BW_0(\mbox{\divv}\,;\,\Omega)}
\newcommand{\Wcurl}{\BW_0(\mbox{\curl}\,A;\,\Omega)}
\newcommand{\WcrossV}{\BW \times V}

\def\grad{{\nabla}}

\def\calS{{\cal S}}
\def\calT{{\cal T}}
\def\cA{{\mathcal A}}
\def\cB{{\cal B}}
\def\cD{{\mathcal{D}}}

\def\cH{{\cal H}}
\def\ba{{\mathbf{a}}}
\def\bbz{{\mathbf{z}}}

\def\beps{{\mathbf{\epsilon}}}

\def\bq{{\mathbf{q}}}

\def\br{{\mathbf{r}}}
\def\bg{\mathbf{g}}

\def\cM{{\mathcal{M}}}
\def\cN{{\mathcal{N}}}
\def\cT{{\mathcal{T}}}
\def\cE{{\mathcal{E}}}
\def\cP{{\mathcal{P}}}
\def\cF{{\mathcal{F}}}

\def\cB{{\mathcal{B}}}
\def\cG{{\mathcal{G}}}

\def\cL{{\mathcal{L}}}
\def\cJ{{\mathcal{J}}}
\def\cK{{\mathcal{K}}}
\def\cV{{\mathcal{V}}}
\def\cW{{\mathcal{W}}}
\def\bP{{\mathbf{P}}}
\def\bS{{\mathbf{S}}}
\def\bQ{{\mathbf{Q}}}

\def\bRT{{\mathbf{RT}}}
\def\bBDM{{\mathbf{BDM}}}

\def\bSigma{{\mathbf{\Sigma}}}

\def\R{\mathbb{R}}
\newcommand{\lJump}{[\![}
\newcommand{\rJump}{]\!]}
\newcommand{\jump}[1]{[\![ #1]\!]}

\newcommand{\sd}{\bsigma^{\Delta}}
\newcommand{\rd}{\brho^{\Delta}}

\newcommand{\eps}{\epsilon}
\newcommand{\dual}[2]{\left\langle #1,#2\right\rangle}

\def\ma{{\mathtt a}}
\def\mb{{\mathtt b}}
\def\mc{{\mathtt c}}
\def\md{{\mathtt d}}

\def\eg{{\mathtt {eg}}}
\def\full{{\mathtt {full}}}

\newcommand{\LS}{{\mathtt{LS}}}
\newcommand{\NLS}{{\mathtt{NLS}}}
\newcommand{\NJ}{{\mathtt{NJ}}}
\newcommand{\JJ}{{\mathtt{J}}}

\title [Goal-Oriented LSFEM]{Goal-Oriented Error Estimation for Least-Squares Finite Element Methods via Physically Meaningful Adjoint PDEs}
\author[Y. Wu and S. Zhang]{Yueyao Wu and Shun Zhang}
\address{Department of Mathematics, City University of Hong Kong, Kowloon Tong, Hong Kong, China}
\email{wyueyao2-c@my.cityu.edu.hk, shun.zhang@cityu.edu.hk}
\thanks{This work was supported in part by
Research Grants Council of the Hong Kong SAR, China, under the GRF Grant Project No. CityU 11316222, CityU 11305025}
\date{\today}

\keywords{goal-oriented error estimation, quantity of interest,
least-squares finite element methods, first-order system least squares,
a posteriori error estimation, adjoint problem}

\maketitle

\begin{abstract}
We develop a goal-oriented error-estimation framework for first-order system least-squares (FOSLS) finite element methods based on the physical PDE adjoint rather than the adjoint induced by the least-squares formulation. For a prescribed output, the physical adjoint, identified here explicitly from the differential operator, its mixed boundary conditions, and the output, admits its own first-order flux system and hence a native, built-in least-squares estimator. The least-squares-induced adjoint, by contrast, restores Galerkin orthogonality in the least-squares inner product but corresponds to no simple boundary value problem, and does not inherit the native residual estimator of the physical first-order adjoint system. Our error identities rest on a primal--dual corrected functional and are \emph{formulation independent}: they follow only from the continuous primal and physical-adjoint equations, hold for arbitrary conforming approximations, and require no Galerkin orthogonality. The analysis covers linear second-order elliptic problems with mixed boundary conditions and quantities of interest involving volume, gradient, Neumann-boundary, and weighted Dirichlet-boundary-flux terms, the last determining the essential datum of the physical adjoint. We construct two corrected approximations of the output, one from the primal and adjoint potentials and one also using the fluxes, and show that both satisfy product-type error estimates in the primal and adjoint errors, hence converge at the combined primal--adjoint order. The native least-squares functionals then yield computable a~posteriori bounds, and their exact elementwise decomposition gives a balanced goal-oriented marking indicator whose global sum equals exactly the product of the two estimators. Numerical experiments confirm the predicted uniform-refinement rates and demonstrate robust product-rate behavior under adaptive refinement.
\end{abstract}

\section{Introduction}
\label{introduction}

In many applications the primary computational objective is not to
approximate the solution of a partial differential equation in a global
norm, but to evaluate a prescribed scalar output, a \emph{quantity of
interest} (QoI): a weighted average, a localized measurement, a boundary
trace, or a normal flux through part of the boundary. Goal-oriented error
estimation seeks both to assess the error in such an output and to steer
adaptive refinement toward the regions that most affect it. The adjoint,
or dual, problem is the classical instrument for this purpose: it encodes
the sensitivity of the output to local residuals and underlies the
duality-based and dual-weighted-residual a~posteriori theories
\cite{GS:02,PG:00,BR:01,BR:03}, as well as the convergence
analysis of goal-oriented adaptive finite element methods
\cite{MS:09,FPV:16}.

First-order system least-squares (FOSLS) finite element methods
\cite{CLMM:94,CMM:97,Jiang:98,BG:09,CS:04,
Ku:07,LSQoI:14,CFZ:15} recast
a high-order problem as a first-order system and
minimize the $L^2$ residual of that system. Under a norm equivalence
between the least-squares functional and the natural 
norm \cite{CLMM:94,CMM:97,CFZ:15,Zhang:23,Ku:07},
the value of the functional at the computed solution is a fully
computable, reliable, and efficient a~posteriori error estimator. This
\emph{built-in} estimator, which requires neither residual reconstruction
nor generic interpolation constants, is one of the principal attractions
of least-squares and, more generally, minimum-residual methods. For a series of problems, the built-in least-squares functional error estimators have been studied, for example, \cite{LZ:18,LZ:19,QZ:20}. For a
recent review and comparison of adaptive least-squares schemes we refer
to \cite{Bringmann:23}. The discontinuous Petrov--Galerkin (DPG) method
\cite{DPG:acta2025} is a closely related minimum-residual
method with an analogous built-in estimator \cite{CDG:14}.

Combining goal-oriented estimation with a least-squares method forces a
choice between two different notions of adjoint. The first is the
\emph{physical}, or PDE, adjoint: the boundary value problem determined
by the differential operator, its boundary conditions, and the output,
obtained directly from Green's formula. The second is a
\emph{formulation-induced} adjoint, obtained by transposing the bilinear
form of a particular discretization; for a least-squares formulation this
is the adjoint of the symmetric normal equations, an operator of the form
$L^{*}L$. The two are generally different, and the distinction is
decisive here. The physical adjoint carries the physical meaning of the
output, admits its own first-order flux system, and therefore possesses
its own built-in least-squares estimator. A formulation-induced adjoint,
by contrast, restores Galerkin orthogonality in the metric of the
formulation but corresponds to no simple boundary value problem. Related
constructions, differing among themselves, arise in the FOSLL$^{*}$
($LL^{*}$) methods \cite{CFZ:15} and in the DPG-star method
\cite{DGK:00}; none of them inherits the native residual estimator
associated with the physical first-order adjoint system.

This distinction is visible in existing approaches to QoI-oriented
least-squares and DPG methods. Chaudhry et~al.\ \cite{LSQoI:14}
enrich the least-squares functional with a QoI-tracking term and use an
auxiliary formulation-induced dual, solved in an enriched space, to
supply a computable proxy for the target value. Keith, Astaneh, and
Demkowicz \cite{KAD:19} develop goal-oriented DPG through a dual method
sharing the coefficient matrix of the primal DPG method but carrying a
different load, which restores orthogonality but requires a separately
constructed error estimator. In both cases the dual
actually used is the formulation-induced one rather than the physical
adjoint, and the native minimum-residual estimator is unavailable for it.
The question we address is whether one can retain the physical adjoint, and with it a genuine built-in estimator for \emph{both} the primal and the dual problems, while still obtaining a computable goal-oriented bound.

The obstacle is that a least-squares finite element solution is
orthogonal in the inner product induced by the least-squares functional
but is not the Galerkin projection with respect to the weak bilinear form
of the PDE; consequently the
standard goal-oriented identity, which relies on Galerkin orthogonality,
does not apply when the physical adjoint is used. We remove this obstacle
by means of a primal--dual corrected-functional value. The resulting
error identities are \emph{formulation independent}: they follow only
from the continuous primal and physical-adjoint equations, hold for
arbitrary conforming approximations, and require no Galerkin orthogonality
of the numerical method. The abstract primal--dual corrected functional
originates with Giles and S\"uli \cite{GS:02} (see also Pierce and
Giles \cite{PG:00}); our contribution is to realize it through
the \emph{physical} PDE adjoint of the mixed-boundary elliptic problem,
which we construct explicitly in Section~\ref{sec:primal-adjoint} and
combine with least-squares approximations of both the primal and the
adjoint problems. The least-squares method itself enters only in the
second part of the paper, as the discretization that renders the primal
and adjoint errors computable.

We carry out the analysis for linear second-order elliptic problems with
mixed Dirichlet--Neumann boundary conditions and quantities of interest
containing volume, gradient, Neumann-boundary, and weighted
Dirichlet-boundary-flux terms. The adjoint differential equation, its
mixed boundary conditions, and its first-order flux system are identified
directly from the primal problem and the physical output; in particular,
the weight appearing in the Dirichlet-boundary flux measurement becomes
the essential (Dirichlet) datum of the physical adjoint. Both the primal
and the adjoint problems are then written as first-order systems, which,
once discretized, yield two native least-squares functionals whose
residuals are equivalent to the corresponding flux--potential errors.

Within this framework we introduce two corrected approximations of the
output. The first, a potential-only corrected value, uses only the primal
and adjoint potentials. The second, a flux--potential corrected value,
additionally uses a flux approximation, which may be taken from the
least-squares method itself or from any other discretization, and which is
required only to be square integrable. For arbitrary conforming
approximations both satisfy exact error identities whose right-hand sides
are bilinear pairings of the primal and adjoint errors; the resulting
bounds are therefore products of the two, and the corrected outputs
converge at the sum of the primal and adjoint approximation orders. These
identities, like the framework itself, are independent of how the
underlying approximations are produced.

The least-squares method makes this framework computable. Because
the native primal and adjoint least-squares functionals control the
corresponding flux--potential errors, denoting their square roots by
$\eta_p^{\mathrm{LS}}$ and $\eta_d^{\mathrm{LS}}$, we obtain the
goal-oriented bounds
\[
|\gamma-\gamma_i|
\le C\,\eta_p^{\mathrm{LS}}\eta_d^{\mathrm{LS}},
\qquad i=0,1,
\]
computed entirely from the residuals of the physical primal and adjoint
first-order systems, and requiring neither Galerkin orthogonality for the
weak formulation nor a least-squares-induced adjoint. In contrast to the
existing approaches above, the physical adjoint here supplies a genuine
built-in estimator for the dual problem as well as the primal one.

The exact elementwise decomposition of each least-squares functional
supplies local information for adaptivity. A direct elementwise product of
the primal and adjoint indicators neither decomposes the global product
additively nor detects an element where only one of the two local
residuals is large. We therefore use a weighted goal-oriented marking of
the type introduced in \cite{BET:11}, here formed from the
built-in primal and adjoint least-squares functionals: the local primal
contributions are weighted by the global adjoint estimator and the local
adjoint contributions by the global primal estimator. With this balancing
the element indicators sum exactly to the product of the two global
estimators, so the resulting D\"orfler marking involves no unknown
estimator-equivalence constants in its relative primal--adjoint scaling.

The main contributions of this paper are as follows.
\begin{itemize}
\item We realize the Giles--S\"uli corrected-functional framework through
the physical PDE adjoint and separate this continuous construction from
its least-squares realization. This permits arbitrary conforming
approximations to be inserted into the corrected identities before the
least-squares estimators are invoked.
\item For mixed-boundary elliptic problems and quantities of interest
involving volume, gradient, Neumann-boundary, and Dirichlet-boundary flux
terms, we identify the physical adjoint problem and its first-order flux
formulation directly from the PDE and the output; the Dirichlet-boundary
flux weight becomes the essential datum of the physical adjoint.
\item We construct a potential-only and a flux--potential corrected
quantity and prove product-type error identities in the primal and
adjoint errors; both hold for arbitrary conforming approximations of the
potentials, the flux entering the second value being an arbitrary
square-integrable field, and both converge at the combined primal--adjoint
order.
\item Realizing the framework with FOSLS, we obtain computable
goal-oriented a~posteriori bounds in which the physical adjoint supplies a
built-in estimator for \emph{both} the primal and the dual problem, and we
derive the exact elementwise contributions and a balanced marking strategy
whose global sum equals the product of the two estimators.
\end{itemize}

The remainder of the paper is organized as follows.
Section~\ref{sec:model} introduces the primal boundary value problem, its
first-order formulation, and the class of quantities of interest.
Section~\ref{sec:primal-adjoint} derives the physical adjoint problem and
establishes the primal--adjoint representation of the output.
Section~\ref{sec:qoi-error-identities} introduces the two corrected
quantities and proves their product error identities; these three
sections are independent of the discretization.
Section~\ref{sec:LS-estimators} develops the primal and adjoint
least-squares estimators and the resulting goal-oriented bounds.
Section~\ref{sec:LSFEM} presents the least-squares finite element
discretizations and the discrete corrected outputs.
Section~\ref{sec:adaptive-algorithm} introduces the balanced adaptive
marking strategy. Numerical experiments are reported in
Section~\ref{sec:numerical}, and concluding remarks in
Section~\ref{sec:conclusions}.

\section{Model Elliptic Problem and Quantities of Interest}
\label{sec:model}

Let $\O\subset\mathbb R^d$, $d\ge2$, be a bounded Lipschitz domain with boundary decomposition $\partial\O=\overline{\G_D}\cup\overline{\G_N}$, where $\G_D$ and $\G_N$ are disjoint relatively open subsets of $\partial\O$ and $|\G_D|>0$.

We consider the second-order elliptic boundary value problem
\begin{equation}\label{eq_primal}
\begin{aligned}
-\gradt(A\nabla u)+\bb\cdot\nabla u+cu&=f_1+\gradt\bff_2 &&\text{in }\O,\\
u&=\phi_p &&\text{on }\G_D,\\
-(A\nabla u+\bff_2)\cdot\bn&=\psi_p &&\text{on }\G_N.
\end{aligned}
\end{equation}
Here $f_1\in L^2(\O)$, $\bff_2\in L^2(\O)^d$, $\phi_p\in H^{1/2}(\G_D)$, and $\psi_p\in L^2(\G_N)$. The space $H^{1/2}(\G_D)$ is understood as the restriction of $H^{1/2}(\partial\O)$ to $\G_D$, equipped with the quotient norm; in particular, every $\phi_p\in H^{1/2}(\G_D)$ admits an $H^1(\O)$ lifting.

The diffusion coefficient $A\in L^\infty(\O;\mathbb R^{d\times d})$ is symmetric and uniformly positive definite: there exist constants $0<a_0\le a_1<\infty$ such that
\begin{equation}\label{A_uniform_ellipticity}
a_0|\xi|^2\le \xi^TA(x)\xi\le a_1|\xi|^2
\qquad\forall\xi\in\mathbb R^d,\ \text{for a.e. }x\in\O.
\end{equation}
The lower-order coefficients satisfy $\bb\in L^\infty(\O)^d$ and $c\in L^\infty(\O)$. Define $H_D^1(\O):=\{v\in H^1(\O):v=0\text{ on }\G_D\}$. Since $|\G_D|>0$, the Poincar\'e inequality holds on $H_D^1(\O)$. Throughout, we write $\|v\|_1:=\|v\|_{H^1(\O)}$.

\subsection{The primal weak problem}
\label{subsec:primal-weak}

Choose a lifting $u_p\in H^1(\O)$ with $u_p=\phi_p$ on $\G_D$ and define
\begin{equation}\label{Xp}
X_p:=\{w\in H^1(\O):w=\phi_p\text{ on }\G_D\}=u_p+H_D^1(\O).
\end{equation}
For $t,s\in H^1(\O)$, define
\begin{equation}\label{def_a}
a(t,s):=(A\nabla t,\nabla s)+(\bb\cdot\nabla t+ct,s),
\end{equation}
and
\begin{equation}\label{def_ell}
\ell(s):=(f_1,s)-(\bff_2,\nabla s)-\langle\psi_p,s\rangle_{\G_N}.
\end{equation}
The boundary term is the $L^2(\G_N)$ pairing, and the trace theorem gives $\ell\in H^1(\O)'$. The coefficient assumptions imply
\begin{equation}\label{continuity_a}
|a(t,s)|\le C_{\rm con}\|t\|_1\|s\|_1
\qquad\forall t,s\in H^1(\O).
\end{equation}

Throughout, we assume the inf--sup condition
\begin{equation}\label{infsup}
0<\beta
=
\inf_{0\ne w_0\in H_D^1(\O)}
\sup_{0\ne v_0\in H_D^1(\O)}
\frac{a(w_0,v_0)}{\|w_0\|_1\|v_0\|_1}
=
\inf_{0\ne v_0\in H_D^1(\O)}
\sup_{0\ne w_0\in H_D^1(\O)}
\frac{a(w_0,v_0)}{\|w_0\|_1\|v_0\|_1}.
\end{equation}
Equivalently, the operator $\mathcal A:H_D^1(\O)\to H_D^1(\O)'$, defined by $\langle\mathcal Aw_0,v_0\rangle:=a(w_0,v_0)$, is an isomorphism.

The weak formulation of \eqref{eq_primal} is: find $u\in X_p$ such that
\begin{equation}\label{weak}
a(u,s_0)=\ell(s_0)
\qquad\forall s_0\in H_D^1(\O).
\end{equation}
Equivalently, writing $u=u_p+u_0$ with $u_0\in H_D^1(\O)$, we have
\begin{equation}\label{homo}
a(u_0,s_0)=\ell(s_0)-a(u_p,s_0)
\qquad\forall s_0\in H_D^1(\O).
\end{equation}

\begin{thm}\label{thm:primal_wellposed}
Under \eqref{infsup}, the weak problem \eqref{weak} has a unique solution $u\in X_p$.
\end{thm}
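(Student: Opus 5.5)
The plan is to reduce \eqref{weak} to the homogeneous problem \eqref{homo} and apply the isomorphism property of $\mathcal A$ stated right after \eqref{infsup}. Fix the lifting $u_p\in H^1(\O)$ of $\phi_p$. Every $w\in X_p$ is of the form $w=u_p+w_0$ with $w_0\in H_D^1(\O)$, so $u\in X_p$ solves \eqref{weak} if and only if $u_0=u-u_p\in H_D^1(\O)$ solves \eqref{homo}.

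\emph{Existence.} Define the functional $F(s_0):=\ell(s_0)-a(u_p,s_0)$ on $H_D^1(\O)$. By the trace theorem, $\ell\in H^1(\O)'$. By \eqref{continuity_a}, $|a(u_p,s_0)|\le C_{\rm con}\|u_p\|_1\|s_0\|_1$. Hence $F\in H_D^1(\O)'$. The inf--sup condition \eqref{infsup}, in the form of both the $\inf$--$\sup$ and the transposed $\inf$--$\sup$, is exactly the Banach--Ne\v{c}as--Babu\v{s}ka condition. It makes $\mathcal A:H_D^1(\O)\to H_D^1(\O)'$ an isomorphism: the first condition gives injectivity and closed range, and the second gives density of the range, since a $v_0$ annihilating the range satisfies $a(w_0,v_0)=0$ for all $w_0$, which forces $v_0=0$. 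So there is $u_0=\mathcal A^{-1}F\in H_D^1(\O)$, and $u:=u_p+u_0\in X_p$ solves \eqref{weak}.

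\emph{Uniqueness.} If $u,\tilde u\in X_p$ both solve \eqref{weak}, then $w_0:=u-\tilde u\in H_D^1(\O)$, because both have trace $\phi_p$ on $\G_D$. Subtracting the two equations gives $a(w_0,s_0)=0$ for all $s_0\in H_D^1(\O)$. The first inf--sup inequality then yields $\beta\|w_0\|_1\le\sup_{s_0}a(w_0,s_0)/\|s_0\|_1=0$, so $u=\tilde u$.

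The solution also does not depend on the choice of lifting. Any two liftings differ by an element of $H_D^1(\O)$, so they define the same affine space $X_p$, and \eqref{weak} is posed on $X_p$ itself.

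The only potentially delicate point is surjectivity of $\mathcal A$, which requires the second (transposed) inf--sup condition. Since the paper states the isomorphism equivalence directly after \eqref{infsup}, I would simply invoke it rather than reprove it. As a by-product, the argument gives the stability bound $\|u\|_1\le\|u_p\|_1+\beta^{-1}\bigl(\|\ell\|_{H^1(\O)'}+C_{\rm con}\|u_p\|_1\bigr)$.
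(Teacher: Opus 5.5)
Your proposal is correct and follows the same route as the paper, which proves the theorem in one line by applying the Babu\v{s}ka--Ne\v{c}as theorem to the homogeneous reformulation \eqref{homo}. You have simply written out the details that one-line proof leaves implicit: that $F\in H_D^1(\O)'$, that the transposed inf--sup condition gives surjectivity, and that the first inf--sup condition gives uniqueness.
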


\begin{proof}
The result follows from the Babu\v{s}ka--Ne\v{c}as theorem applied to \eqref{homo}.
\end{proof}

\subsection{The primal first-order system}
\label{subsec:primal-fos}

For the weak solution $u\in X_p$, define the physical flux $\bsigma:=-A\nabla u-\bff_2$. Then $\bsigma\in L^2(\O)^d$, and the first equation of \eqref{eq_primal} yields $\gradt\bsigma=f_1-\bb\cdot\nabla u-cu\in L^2(\O)$; hence $\bsigma\in H(\divvr;\O)$. The associated first-order system is
\begin{equation}\label{fos_primal}
\begin{aligned}
\gradt\bsigma+\bb\cdot\nabla u+cu&=f_1 &&\text{in }\O,\\
\bsigma+A\nabla u&=-\bff_2 &&\text{in }\O,\\
u&=\phi_p &&\text{on }\G_D,\\
\bsigma\cdot\bn&=\psi_p &&\text{on }\G_N.
\end{aligned}
\end{equation}
The Neumann condition is understood in the normal-trace sense, namely,
\[
\langle\bsigma\cdot\bn,s_0\rangle_{\partial\O}
=
\langle\psi_p,s_0\rangle_{\G_N}
\qquad\forall s_0\in H_D^1(\O).
\]

\begin{lem}\label{lem:primal_weak_fos}
Let $u\in X_p$ and $\bsigma:=-A\nabla u-\bff_2$. Then $u$ satisfies \eqref{weak} if and only if $(\bsigma,u)$ satisfies \eqref{fos_primal}.
\end{lem}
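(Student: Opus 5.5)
The plan is to prove both implications using the distributional definition of divergence and the Green formula for $H(\divvr;\O)\times H^1(\O)$:
\[
(\gradt\btau,s)+(\btau,\nabla s)=\langle\btau\cdot\bn,s\rangle_{\partial\O},
\qquad \btau\in H(\divvr;\O),\ s\in H^1(\O).
\]
Since $u\in X_p$ is given and $\bsigma:=-A\nabla u-\bff_2$ by definition, the second and third equations of \eqref{fos_primal} hold automatically. So everything reduces to the divergence equation together with the Neumann condition.

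\textbf{($\Rightarrow$)} Assume $u$ satisfies \eqref{weak}.
\begin{itemize}
\item First test with $s_0\in C_c^\infty(\O)\subset H_D^1(\O)$. Since $a(u,s_0)=-(\bsigma,\nabla s_0)-(\bff_2,\nabla s_0)+(\bb\cdot\nabla u+cu,s_0)$, the $\bff_2$ terms cancel against those in $\ell$. This gives $-(\bsigma,\nabla s_0)=(f_1-\bb\cdot\nabla u-cu,s_0)$ for all such $s_0$.
\item Hence, distributionally, $\gradt\bsigma=f_1-\bb\cdot\nabla u-cu\in L^2(\O)$, so $\bsigma\in H(\divvr;\O)$ and the first equation holds.
\item Now take a general $s_0\in H_D^1(\O)$. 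Apply the Green formula to $-(\bsigma,\nabla s_0)$ and use the first equation to cancel the volume terms. What remains is $\langle\bsigma\cdot\bn,s_0\rangle_{\partial\O}=\langle\psi_p,s_0\rangle_{\G_N}$, which is exactly the normal-trace form of the Neumann condition.
\end{itemize}

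\textbf{($\Leftarrow$)} Assume $(\bsigma,u)$ satisfies \eqref{fos_primal}, with $\bsigma\in H(\divvr;\O)$ implicit in the divergence equation. For $s_0\in H_D^1(\O)$, multiply the first equation by $s_0$ and integrate. Apply the Green formula to $(\gradt\bsigma,s_0)$ and substitute the Neumann condition in its normal-trace form. Finally insert $\bsigma=-A\nabla u-\bff_2$; rearranging gives $a(u,s_0)=\ell(s_0)$.

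The only delicate point is the meaning of the normal trace. The trace $\bsigma\cdot\bn$ lives in $H^{-1/2}(\partial\O)$, while $\psi_p\in L^2(\G_N)$, and the boundary condition is only meaningful when tested against traces of $H_D^1(\O)$ functions. The paper's stated weak interpretation of the Neumann condition is precisely what the Green formula produces, so no extra regularity is needed. I would note explicitly that $H_D^1(\O)$ functions have traces vanishing on $\G_D$, so only the $\G_N$ part of the pairing contributes.
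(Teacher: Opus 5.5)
Your proposal is correct and takes essentially the same route as the paper. In the forward direction you identify $\gradt\bsigma=f_1-\bb\cdot\nabla u-cu$, which the paper takes from an earlier observation and you derive explicitly by testing with compactly supported functions, as the paper does in the adjoint lemma. You then apply Green's formula for general $s_0\in H_D^1(\O)$ and compare with \eqref{weak} to get the Neumann condition in the normal-trace sense; the converse runs the same Green identity backwards.
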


\begin{proof}
Suppose that $u$ satisfies \eqref{weak}. As observed above, $\bsigma\in H(\divvr;\O)$ and $\gradt\bsigma+\bb\cdot\nabla u+cu=f_1$. The constitutive equation and the Dirichlet condition follow from the definition of $\bsigma$ and $u\in X_p$. For every $s_0\in H_D^1(\O)$, Green's formula gives
\[
a(u,s_0)
=
(\gradt\bsigma+\bb\cdot\nabla u+cu,s_0)
-(\bff_2,\nabla s_0)
-\langle\bsigma\cdot\bn,s_0\rangle_{\partial\O}.
\]
Comparison with \eqref{weak} yields $\bsigma\cdot\bn=\psi_p$ on $\G_N$ in the normal-trace sense.

Conversely, if $(\bsigma,u)$ satisfies \eqref{fos_primal}, then for every $s_0\in H_D^1(\O)$,
\[
a(u,s_0)
=
(f_1,s_0)-(\bff_2,\nabla s_0)-\langle\psi_p,s_0\rangle_{\G_N}
=
\ell(s_0),
\]
where Green's formula and the normal boundary condition have been used. Hence $u$ satisfies \eqref{weak}.
\end{proof}

Define
\begin{equation}\label{HNdiv}
H_N(\divvr;\O):=\{\btau\in H(\divvr;\O):\btau\cdot\bn=0\text{ on }\G_N\}.
\end{equation}
Assume that $\psi_p$ admits an $H(\divvr;\O)$ lifting $\bsigma_p$, i.e., $\bsigma_p\cdot\bn=\psi_p$ on $\G_N$, and define
\begin{equation}\label{Sigma_p}
\bSigma_p:=\bsigma_p+H_N(\divvr;\O)
=
\{\btau\in H(\divvr;\O):\btau\cdot\bn=\psi_p\text{ on }\G_N\}.
\end{equation}
Then $(\bsigma,u)\in\bSigma_p\times X_p$.

\subsection{General linear quantities of interest}
\label{subsec:primal-qoi}

Let $g_1\in L^2(\O)$, $\bg_2\in L^2(\O)^d$, $\psi_d\in L^2(\G_N)$, and $\phi_d\in H^{1/2}(\G_D)$. We consider quantities of interest involving volume and gradient observations, Neumann-boundary traces, and weighted Dirichlet-boundary fluxes, and define
\begin{equation}\label{cJp}
\cJ_p(w)
:=
(g_1,w)-(\bg_2,\nabla w)-\langle\psi_d,w\rangle_{\G_N}
-\langle(A\nabla w+\bff_2)\cdot\bn,\phi_d\rangle_{\G_D},
\end{equation}
whenever the final pairing is well defined; for the exact solution it is understood by duality through any 
 $H^1(\Omega)$ lifting of $\phi_d$, and is independent of the lifting by \eqref{weak}. In the present analysis, $\cJ_p$ is evaluated only at the exact primal solution; the corrected quantities introduced below are defined through 
$\ell$, $a(\cdot,\cdot)$, and the adjoint form  $m$ introduced in Section~\ref{sec:primal-adjoint},
and hence require no conforming flux for an arbitrary conforming potential approximation.

For the exact solution, $\bsigma=-A\nabla u-\bff_2$ gives
\[
-\langle(A\nabla u+\bff_2)\cdot\bn,\phi_d\rangle_{\G_D}
=
\langle\bsigma\cdot\bn,\phi_d\rangle_{\G_D},
\]
so \eqref{cJp} contains a weighted Dirichlet-boundary flux measurement. We define the scalar quantity of interest by 
\beq
\gamma:=\cJ_p(u).
\eeq 
As shown in the next section, the weight $\phi_d$ becomes the essential boundary datum of the physically meaningful adjoint problem.

\section{Physically Meaningful Primal--Adjoint PDEs}
\label{sec:primal-adjoint}

 The primal--dual framework of Giles and S\"uli~\cite{GS:02}
provides the algebraic structure underlying the corrected quantities
introduced below, but in an abstract form: the adjoint enters only
through a transposed bilinear form and an abstract functional. Our aim in
this section is to make this structure concrete and physical. For the
mixed-boundary elliptic problem \eqref{eq_primal} and the general
quantity of interest \eqref{cJp}, we determine explicitly the adjoint
differential operator, its Dirichlet and Neumann boundary conditions, and
its first-order flux system, directly from the primal PDE and the
physical output, so that the ``adjoint'' is itself a boundary value
problem one can write down, interpret, and discretize, rather than an
operator-level object tied to a particular formulation. In particular,
the weight $\phi_d$ of the Dirichlet-boundary flux measurement is
identified as the essential datum of the adjoint. This explicit,
PDE-level realization is what makes the adjoint physically meaningful and
what later allows a native least-squares estimator to be attached to it.

\subsection{The primal--dual framework}
\label{subsec:GS-framework}

Let $U$ and $V$ be Hilbert spaces, let $U_0\subset U$ and $V_0\subset V$ be closed subspaces, and define the affine spaces $U_p:=p+U_0$ and $V_d:=d+V_0$ for fixed $p\in U$ and $d\in V$. Let $B:U\times V\to\mathbb R$ be a bounded bilinear form, with $m\in U'$ and $\ell\in V'$. The primal measurement problem is to find $(\gamma_p,u)\in\mathbb R\times U_p$ such that
\begin{equation}\label{GS_primal}
\gamma_p=m(u)+\ell(v)-B(u,v)\qquad\forall v\in V_d,
\end{equation}
whereas the dual measurement problem is to find $(\gamma_d,z)\in\mathbb R\times V_d$ such that
\begin{equation}\label{GS_dual}
\gamma_d=m(w)+\ell(z)-B(w,z)\qquad\forall w\in U_p.
\end{equation}

For the elliptic problem of Section~\ref{sec:model}, we take $U=V=H^1(\O)$, $U_0=V_0=H_D^1(\O)$, $U_p=X_p$, and $B(\cdot,\cdot)=a(\cdot,\cdot)$, with $X_p$, $a$, and $\ell$ defined in \eqref{Xp}, \eqref{def_a}, and \eqref{def_ell}. It remains to identify the dual affine space $X_d$ and the functional $m$ from the physical quantity of interest $\cJ_p$.

\subsection{Identification of the adjoint data}
\label{subsec:adjoint-data}

We first identify the essential boundary data of the physical adjoint from
the boundary-flux contribution in the primal quantity of interest.
Let $z_d\in H^1(\O)$ be a function whose trace on $\G_D$ will be specified
below. Since
\[
A\nabla u+\bff_2=-\bsigma\in H(\divvr;\O),
\]
Green's formula gives
\begin{align}
a(u,z_d)
&=(A\nabla u,\nabla z_d)+(\bb\cdot\nabla u+cu,z_d)
=(A\nabla u+\bff_2,\nabla z_d)
  +(\bb\cdot\nabla u+cu,z_d)
  -(\bff_2,\nabla z_d)
\nonumber\\
&=\bigl(-\divvr(A\nabla u+\bff_2)
        +\bb\cdot\nabla u+cu,z_d\bigr)
  -(\bff_2,\nabla z_d)
  +\langle(A\nabla u+\bff_2)\cdot\bn,z_d\rangle_{\G_D}
  +\langle(A\nabla u+\bff_2)\cdot\bn,z_d\rangle_{\G_N}.
\label{aprimal}
\end{align}
Using the primal equation and the Neumann boundary condition
\[
-(A\nabla u+\bff_2)\cdot\bn=\psi_p
\qquad\text{on }\G_N,
\]
we obtain
\begin{align}
a(u,z_d)
&=(f_1,z_d)-(\bff_2,\nabla z_d)
  -\langle\psi_p,z_d\rangle_{\G_N}
  +\langle(A\nabla u+\bff_2)\cdot\bn,z_d\rangle_{\G_D}
=\ell(z_d)
  +\langle(A\nabla u+\bff_2)\cdot\bn,z_d\rangle_{\G_D}.
\label{aprimal_boundary}
\end{align}
Consequently,
\begin{equation}\label{ell_minus_a}
\ell(z_d)-a(u,z_d)
=
-\langle(A\nabla u+\bff_2)\cdot\bn,z_d\rangle_{\G_D}.
\end{equation}
The primal quantity of interest contains the Dirichlet-boundary flux term
\[
-\langle(A\nabla u+\bff_2)\cdot\bn,\phi_d\rangle_{\G_D}.
\]
Comparison with \eqref{ell_minus_a} therefore identifies the essential
boundary data of the physical adjoint:
\begin{equation}\label{dual_essential_data}
z_d=\phi_d
\qquad\text{on }\G_D.
\end{equation}
Thus, $z_d$ is chosen as an arbitrary $H^1(\O)$ lifting of $\phi_d$.

We next define
\begin{equation}\label{def_m}
m(w)
:=
(g_1,w)-(\bg_2,\nabla w)
-\langle\psi_d,w\rangle_{\G_N},
\qquad w\in H^1(\O).
\end{equation}
With the choice \eqref{dual_essential_data}, the primal quantity of
interest admits the representation
\begin{equation}\label{Jp_representation}
\cJ_p(u)
=
m(u)+\ell(z_d)-a(u,z_d).
\end{equation}
Define the adjoint affine space
\begin{equation}\label{Xd}
X_d:=\{v\in H^1(\O):v=\phi_d\text{ on }\G_D\}=z_d+H_D^1(\O).
\end{equation}
For any $v\in X_d$, the primal weak equation gives $\ell(v-z_d)=a(u,v-z_d)$; therefore,
\begin{equation}\label{primal_measurement_identity}
\cJ_p(u)=m(u)+\ell(v)-a(u,v)\qquad\forall v\in X_d.
\end{equation}
Thus the primal measurement problem is to find $(\gamma_p,u)\in\mathbb R\times X_p$ such that
\begin{equation}\label{problemP}
\gamma_p=m(u)+\ell(v)-a(u,v)\qquad\forall v\in X_d.
\end{equation}

\begin{thm}\label{thm:primal_QoI_equivalence}
The problem \eqref{problemP} has a unique solution $(\gamma_p,u)\in\mathbb R\times X_p$. Moreover, $u$ is the solution of \eqref{weak} and
\begin{equation}\label{gamma_p_identity}
\gamma_p=\cJ_p(u)=\gamma.
\end{equation}
\end{thm}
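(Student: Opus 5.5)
The plan is to prove existence by exhibiting the solution, and uniqueness by extracting the weak equation \eqref{weak} from \eqref{problemP} through differences of test functions.

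\emph{Existence.} Let $u\in X_p$ be the unique solution of \eqref{weak} from Theorem~\ref{thm:primal_wellposed}, and set $\gamma_p:=\cJ_p(u)$. By \eqref{primal_measurement_identity}, $m(u)+\ell(v)-a(u,v)=\cJ_p(u)$ for every $v\in X_d$. Hence $(\gamma_p,u)$ solves \eqref{problemP}, and $\gamma_p=\cJ_p(u)=\gamma$. That identity was itself obtained from \eqref{Jp_representation}: the flux term in \eqref{cJp} is replaced through \eqref{ell_minus_a} with the lifting $z_d$ of $\phi_d$, and the weak equation is used for $v-z_d\in H_D^1(\O)$. We take it as established.

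\emph{Uniqueness.} Suppose $(\tilde\gamma,\tilde u)\in\mathbb R\times X_p$ solves \eqref{problemP}. Fix $z_d\in X_d$ and an arbitrary $s_0\in H_D^1(\O)$. Then $z_d+s_0\in X_d$ as well. Subtracting the two instances of \eqref{problemP}, with $v=z_d+s_0$ and $v=z_d$, cancels $\tilde\gamma$ and $m(\tilde u)$ and leaves $\ell(s_0)-a(\tilde u,s_0)=0$ by linearity. So $\tilde u\in X_p$ satisfies \eqref{weak}, and Theorem~\ref{thm:primal_wellposed} gives $\tilde u=u$. Putting $v=z_d$ back into \eqref{problemP} gives $\tilde\gamma=m(u)+\ell(z_d)-a(u,z_d)$, which is $\cJ_p(u)$ by \eqref{Jp_representation}. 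This is the value from the existence step, so the solution is unique and satisfies \eqref{gamma_p_identity}.

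\emph{Main obstacle.} The only delicate step is the meaning of the Dirichlet-flux pairing in \eqref{cJp} for $\cJ_p(u)$, and its independence of the lifting. This is settled by \eqref{ell_minus_a}: the right-hand side of \eqref{Jp_representation} does not change when $z_d$ is replaced by $z_d+s_0$ with $s_0\in H_D^1(\O)$, precisely because of \eqref{weak}. I would note this explicitly and otherwise rely on the earlier identities.
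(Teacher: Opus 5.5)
Your proof is correct and follows the paper's argument. You subtract \eqref{problemP} at $v=z_d$ from its instance at a general $v\in X_d$ to recover \eqref{weak}, conclude $\tilde u=u$ by Theorem~\ref{thm:primal_wellposed}, and identify $\gamma_p$ through \eqref{Jp_representation}; your explicit existence step via \eqref{primal_measurement_identity} just spells out what the paper leaves implicit.
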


\begin{proof}
Subtracting \eqref{problemP} with $v=z_d$ from the same identity for arbitrary $v\in X_d$ gives $a(u,v-z_d)=\ell(v-z_d)$. Since $v-z_d$ is an arbitrary function in $H_D^1(\O)$, $u$ satisfies \eqref{weak} and is unique by Theorem~\ref{thm:primal_wellposed}. The scalar $\gamma_p$ is then uniquely determined, and \eqref{gamma_p_identity} follows from \eqref{Jp_representation}.
\end{proof}

\subsection{The adjoint problem and its first-order system}
\label{subsec:adjoint-problem}

The dual measurement problem associated with \eqref{problemP} is to find $(\gamma_d,z)\in\mathbb R\times X_d$ such that
\begin{equation}\label{problemD}
\gamma_d=m(w)+\ell(z)-a(w,z)\qquad\forall w\in X_p.
\end{equation}
Taking differences in $X_p$ shows that $z$ is characterized by the adjoint weak problem
\begin{equation}\label{weak_dual}
a(s_0,z)=m(s_0)\qquad\forall s_0\in H_D^1(\O).
\end{equation}
Since $z=\phi_d$ on $\G_D$, integration by parts in \eqref{weak_dual} gives the adjoint boundary value problem
\begin{equation}\label{eq_dual}
\begin{aligned}
-\gradt(A\nabla z+\bb z)+cz&=g_1+\gradt\bg_2 &&\text{in }\O,\\
z&=\phi_d &&\text{on }\G_D,\\
-(A\nabla z+\bb z+\bg_2)\cdot\bn&=\psi_d &&\text{on }\G_N.
\end{aligned}
\end{equation}

Define the adjoint flux $\br:=-(A\nabla z+\bb z+\bg_2)$. The associated first-order system is
\begin{equation}\label{fos_dual}
\begin{aligned}
\gradt\br+cz&=g_1 &&\text{in }\O,\\
\br+A\nabla z+\bb z&=-\bg_2 &&\text{in }\O,\\
z&=\phi_d &&\text{on }\G_D,\\
\br\cdot\bn&=\psi_d &&\text{on }\G_N.
\end{aligned}
\end{equation}
The Neumann condition is understood in the normal-trace sense, namely, $\langle\br\cdot\bn,s_0\rangle_{\partial\O}=\langle\psi_d,s_0\rangle_{\G_N}$ for all $s_0\in H_D^1(\O)$.

\begin{lem}\label{lem:dual_weak_fos}
Let $z\in X_d$ and $\br:=-(A\nabla z+\bb z+\bg_2)$. Then $z$ satisfies \eqref{weak_dual} if and only if $(\br,z)$ satisfies \eqref{fos_dual}.
\end{lem}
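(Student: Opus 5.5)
The argument mirrors the proof of Lemma~\ref{lem:primal_weak_fos}, with the roles of the convection term moved into the flux. The key identity is the adjoint Green's formula. For every $s_0\in H_D^1(\O)$, whenever $\br\in H(\divvr;\O)$,
\[
a(s_0,z)=(A\nabla z+\bb z,\nabla s_0)+(cz,s_0)
=(\gradt\br+cz,s_0)+(\bg_2,\nabla s_0)...
\]
More precisely, since $A$ is symmetric we have $(A\nabla s_0,\nabla z)+(\bb\cdot\nabla s_0,z)=(A\nabla z+\bb z,\nabla s_0)=-(\br+\bg_2,\nabla s_0)$. Green's formula then gives
\[
a(s_0,z)=(\gradt\br+cz,s_0)-(\bg_2,\nabla s_0)-\langle\br\cdot\bn,s_0\rangle_{\partial\O},
\]
where the boundary pairing is the normal trace of $H(\divvr;\O)$ against $H^{1/2}$ traces that vanish on $\G_D$.

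\emph{Forward direction.} Assume $z$ satisfies \eqref{weak_dual}.
\begin{enumerate}
\item First show $\br\in H(\divvr;\O)$. Test \eqref{weak_dual} with $s_0\in C_c^\infty(\O)\subset H_D^1(\O)$; this yields, in the distributional sense, $\gradt\br=g_1-cz$. The right-hand side lies in $L^2(\O)$ because $z\in H^1$ and $c\in L^\infty$. Together with $\br\in L^2(\O)^d$ (which uses $\bb\in L^\infty$), this gives $\br\in H(\divvr;\O)$ and the first equation of \eqref{fos_dual}.
\item The constitutive equation holds by definition of $\br$, and the Dirichlet condition holds because $z\in X_d$.
\item For the Neumann condition, insert the identity above into \eqref{weak_dual} and use $\gradt\br+cz=g_1$. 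Comparing with $m(s_0)$ from \eqref{def_m} leaves $\langle\br\cdot\bn,s_0\rangle_{\partial\O}=\langle\psi_d,s_0\rangle_{\G_N}$ for all $s_0\in H_D^1(\O)$. This is exactly the normal-trace sense of the last condition in \eqref{fos_dual}.
\end{enumerate}

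\emph{Converse.} Assume $(\br,z)$ satisfies \eqref{fos_dual}, with $\br\in H(\divvr;\O)$ implicit in the first equation. The same identity, combined with the first equation and the Neumann condition, gives $a(s_0,z)=(g_1,s_0)-(\bg_2,\nabla s_0)-\langle\psi_d,s_0\rangle_{\G_N}=m(s_0)$. This is \eqref{weak_dual}.

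\emph{Main obstacle.} The only delicate point is the first step of the forward direction: deriving $\gradt\br\in L^2$ from the weak equation. Here one must keep the term $\bb z$ inside the flux rather than expanding $\gradt(\bb z)$, which need not exist since $\bb$ is only $L^\infty$. The justification of the boundary pairing is handled exactly as in Lemma~\ref{lem:primal_weak_fos}.
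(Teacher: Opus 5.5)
Your proposal is correct and follows the paper's proof essentially step for step. You test with compactly supported $s_0$ to get $\gradt\br=g_1-cz\in L^2(\O)$, use the Green identity $a(s_0,z)=(\gradt\br+cz,s_0)-(\bg_2,\nabla s_0)-\langle\br\cdot\bn,s_0\rangle_{\partial\O}$ to read off the Neumann condition in the normal-trace sense, and apply the same identity for the converse. Delete the first, abandoned display: there the $\bg_2$ term has the wrong sign and the boundary term is missing, whereas your ``more precisely'' version is correct.
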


\begin{proof}
Suppose that $z$ satisfies \eqref{weak_dual}. Then $\br\in L^2(\O)^d$, and testing with compactly supported functions gives $\gradt\br=g_1-cz\in L^2(\O)$; hence $\br\in H(\divvr;\O)$ and the first equation of \eqref{fos_dual} holds. The constitutive and Dirichlet conditions follow from the definition of $\br$ and $z\in X_d$. For every $s_0\in H_D^1(\O)$, Green's formula gives
\[
a(s_0,z)=(\gradt\br+cz,s_0)-(\bg_2,\nabla s_0)-\langle\br\cdot\bn,s_0\rangle_{\partial\O},
\]
and comparison with \eqref{weak_dual} yields $\br\cdot\bn=\psi_d$ on $\G_N$ in the normal-trace sense.

Conversely, if $(\br,z)$ satisfies \eqref{fos_dual}, then for every $s_0\in H_D^1(\O)$,
\[
a(s_0,z)=(g_1,s_0)-(\bg_2,\nabla s_0)-\langle\psi_d,s_0\rangle_{\G_N}=m(s_0),
\]
so $z$ satisfies \eqref{weak_dual}.
\end{proof}

\begin{thm}\label{thm:dual_wellposed}
Under \eqref{infsup}, the adjoint weak problem \eqref{weak_dual} has a unique
solution $z\in X_d$.
\end{thm}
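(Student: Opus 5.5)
The plan is to mirror the proof of Theorem~\ref{thm:primal_wellposed}: reduce to a homogeneous problem and apply the Babu\v{s}ka--Ne\v{c}as theorem. Here the adjoint problem is posed with the roles of the arguments of $a(\cdot,\cdot)$ interchanged.

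Fix the lifting $z_d\in H^1(\O)$ of $\phi_d$ from \eqref{dual_essential_data}. Write $z=z_d+z_0$ with $z_0\in H_D^1(\O)$. Then \eqref{weak_dual} is equivalent to finding $z_0\in H_D^1(\O)$ such that
\[
a(s_0,z_0)=m(s_0)-a(s_0,z_d)\qquad\forall s_0\in H_D^1(\O).
\]
The right-hand side is a bounded linear functional on $H_D^1(\O)$:
\begin{itemize}
\item $m\in H^1(\O)'$ because $g_1\in L^2(\O)$, $\bg_2\in L^2(\O)^d$, $\psi_d\in L^2(\G_N)$, and the trace theorem controls the boundary pairing;
\item $s_0\mapsto a(s_0,z_d)$ is bounded by \eqref{continuity_a}, with norm at most $C_{\rm con}\|z_d\|_1$.
\end{itemize}

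Next consider the transposed form $a^*(v_0,w_0):=a(w_0,v_0)$ on $H_D^1(\O)\times H_D^1(\O)$. It is bounded by \eqref{continuity_a}. Assumption \eqref{infsup} states the inf--sup condition in both orders, with the same constant $\beta$. So $a^*$ satisfies
\[
\inf_{v_0}\sup_{w_0}\frac{a^*(v_0,w_0)}{\|v_0\|_1\|w_0\|_1}\ge\beta>0,
\]
together with the nondegeneracy condition: if $a(w_0,v_0)=0$ for all $v_0$, then $w_0=0$. Equivalently, $\mathcal A$ is an isomorphism, hence so is its Banach adjoint $\mathcal A':H_D^1(\O)\to H_D^1(\O)'$, which is exactly the operator of the transposed problem. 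The Babu\v{s}ka--Ne\v{c}as theorem then gives a unique $z_0$, with $\|z_0\|_1\le\beta^{-1}\bigl(\|m\|+C_{\rm con}\|z_d\|_1\bigr)$.

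It remains to check that $z=z_d+z_0$ does not depend on the choice of lifting. Suppose $z$ and $\tilde z$ are two solutions in $X_d$. Then $z-\tilde z\in H_D^1(\O)$ and $a(s_0,z-\tilde z)=0$ for all $s_0\in H_D^1(\O)$. The second inf--sup inequality in \eqref{infsup} forces $z-\tilde z=0$, so the solution is unique in $X_d$.

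No step is a genuine obstacle. The only point needing care is that \eqref{infsup} is symmetric in its two arguments, so both inf--sup conditions required for the transposed form are already assumed.
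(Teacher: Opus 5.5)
Your proposal is correct and takes the same route as the paper: lift by $z_d$, reduce to a homogeneous problem on $H_D^1(\O)$, and conclude from the second inf--sup condition in \eqref{infsup} (Babu\v{s}ka--Ne\v{c}as applied to the transposed form). Your extra checks on the boundedness of the right-hand side and on uniqueness in $X_d$ spell out what the paper's one-line proof leaves implicit.
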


\begin{proof}
After writing $z=z_d+z_0$ with $z_0\in H_D^1(\O)$, the assertion follows
directly from the second inf--sup condition in \eqref{infsup}.
\end{proof}

Assume that $\psi_d$ admits an $H(\divvr;\O)$ lifting $\br_d$, i.e., $\br_d\cdot\bn=\psi_d$ on $\G_N$, and define
\begin{equation}\label{Sigma_d}
\bSigma_d:=\br_d+H_N(\divvr;\O)
=\{\brho\in H(\divvr;\O):\brho\cdot\bn=\psi_d\text{ on }\G_N\}.
\end{equation}
Then $(\br,z)\in\bSigma_d\times X_d$.

\subsection{The dual quantity of interest}
\label{subsec:dual-qoi}

Interchanging the primal and adjoint data suggests the dual physical functional
\begin{equation}\label{cJd}
\cJ_d(v):=(f_1,v)-(\bff_2,\nabla v)-\langle\psi_p,v\rangle_{\G_N}
-\langle(A\nabla v+\bb v+\bg_2)\cdot\bn,\phi_p\rangle_{\G_D},
\end{equation}
whenever the final pairing is well defined, understood as in \eqref{cJp}. In the present analysis, $\cJ_d$ is evaluated only at the exact adjoint solution; the corrected quantities introduced below require no conforming flux for an arbitrary conforming adjoint approximation. Since $\br=-(A\nabla z+\bb z+\bg_2)$,
\begin{equation}\label{Jd_exact}
\cJ_d(z)=\ell(z)+\langle\br\cdot\bn,\phi_p\rangle_{\G_D}.
\end{equation}

\begin{thm}\label{thm:dual_QoI_equivalence}
The dual measurement problem \eqref{problemD} has a unique solution $(\gamma_d,z)\in\mathbb R\times X_d$. Moreover, $z$ is the solution of \eqref{weak_dual} and
\begin{equation}\label{gamma_d_identity}
\gamma_d=\cJ_d(z).
\end{equation}
\end{thm}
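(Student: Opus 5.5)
The argument follows the proof of Theorem~\ref{thm:primal_QoI_equivalence}, with the roles of the primal and adjoint spaces exchanged. The plan has three steps: reduce \eqref{problemD} to \eqref{weak_dual}, invoke Theorem~\ref{thm:dual_wellposed} for uniqueness, and then identify $\gamma_d$ through a Green's formula on the adjoint flux.

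\textbf{Reduction to the adjoint weak problem.} Suppose $(\gamma_d,z)$ solves \eqref{problemD}. Take $w=u_p$ in \eqref{problemD} and subtract it from the identity for an arbitrary $w\in X_p$. Since $m$ is linear and $a(\cdot,z)$ is linear in its first argument, this gives
\[
m(w-u_p)-a(w-u_p,z)=0 .
\]
As $w-u_p$ ranges over all of $H_D^1(\O)$, $z\in X_d$ solves \eqref{weak_dual}. Theorem~\ref{thm:dual_wellposed} then shows that $z$ is unique, and $\gamma_d=m(u_p)+\ell(z)-a(u_p,z)$ is determined by $z$.

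\textbf{Existence.} Conversely, let $z$ be the solution of \eqref{weak_dual} and set $\gamma_d:=m(u_p)+\ell(z)-a(u_p,z)$. For any $w\in X_p$ we have $w-u_p\in H_D^1(\O)$, so by \eqref{weak_dual}
\[
m(w)+\ell(z)-a(w,z)=\gamma_d+m(w-u_p)-a(w-u_p,z)=\gamma_d .
\]
Hence $(\gamma_d,z)$ solves \eqref{problemD}.

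\textbf{Identification of $\gamma_d$.} By \eqref{Jd_exact} it suffices to show
\[
m(w)-a(w,z)=\langle\br\cdot\bn,\phi_p\rangle_{\G_D}
\]
for some, hence every, $w\in X_p$. Lemma~\ref{lem:dual_weak_fos} gives $\br\in H(\divvr;\O)$ with $\gradt\br=g_1-cz$ and $\br\cdot\bn=\psi_d$ on $\G_N$. Writing $A\nabla z=-\br-\bb z-\bg_2$, the adjoint Green formula from the proof of Lemma~\ref{lem:dual_weak_fos}, now applied with a general $w\in H^1(\O)$ in place of $s_0$, reads
\[
a(w,z)=(\gradt\br+cz,w)-(\bg_2,\nabla w)-\langle\br\cdot\bn,w\rangle_{\partial\O}.
\]
The terms $(\bb z,\nabla w)$ cancel after this substitution, and this cancellation should be checked explicitly. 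Inserting the first-order system \eqref{fos_dual} gives
\[
a(w,z)=m(w)-\langle\br\cdot\bn,w\rangle_{\G_D},
\]
where the Neumann part has been absorbed into $m$. Taking $w\in X_p$, so that $w=\phi_p$ on $\G_D$, yields the required identity and therefore $\gamma_d=\cJ_d(z)$.

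\textbf{Main obstacle.} The delicate step is the rigorous meaning of the boundary pairings. The normal trace $\br\cdot\bn$ lies only in $H^{-1/2}(\partial\O)$, so $\langle\br\cdot\bn,\phi_p\rangle_{\G_D}$ must be interpreted, as for \eqref{cJp}, as the global pairing on $\partial\O$ against the trace of the lifting $u_p$, minus the $\G_N$ contribution $\langle\psi_d,u_p\rangle_{\G_N}$. The independence of this quantity from the chosen lifting is exactly what \eqref{weak_dual} guarantees. In the proof, the identity will be stated with this convention, mirroring the interpretation given for $\cJ_p$.
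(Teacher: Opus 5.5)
Your proposal is correct and follows the paper's proof. You take differences in \eqref{problemD} to reduce to \eqref{weak_dual}, invoke Theorem~\ref{thm:dual_wellposed} for uniqueness, and use Green's formula with the adjoint first-order system to get $a(w,z)=m(w)-\langle\br\cdot\bn,\phi_p\rangle_{\G_D}$, hence $\gamma_d=\cJ_d(z)$ via \eqref{Jd_exact}; your explicit existence step and your convention for the $\G_D$ boundary pairing only spell out details the paper leaves implicit.
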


\begin{proof}
Taking differences in \eqref{problemD} shows that $a(s_0,z)=m(s_0)$ for all $s_0\in H_D^1(\O)$; hence $z$ is the unique solution of \eqref{weak_dual} by Theorem~\ref{thm:dual_wellposed}, and $\gamma_d$ is uniquely determined. For any $w\in X_p$, Green's formula, $\gradt\br+cz=g_1$, $\br\cdot\bn=\psi_d$ on $\G_N$, and $w=\phi_p$ on $\G_D$ give
\[
a(w,z)=m(w)-\langle\br\cdot\bn,\phi_p\rangle_{\G_D}.
\]
Therefore,
\[
\gamma_d=m(w)+\ell(z)-a(w,z)
=\ell(z)+\langle\br\cdot\bn,\phi_p\rangle_{\G_D}
=\cJ_d(z),
\]
where \eqref{Jd_exact} was used.
\end{proof}

\subsection{Primal--adjoint equivalence}
\label{subsec:primal-adjoint-equivalence}

The following result is the analogue of the primal--dual equivalence theorem
of Giles and S\"uli \cite[Theorem~2.5]{GS:02}, written here for the
present mixed-boundary setting and the corresponding primal and adjoint
quantities of interest.

\begin{thm}[Primal--dual equivalence]\label{thm:primal_adjoint_equivalence}
Let $u\in X_p$ and $z\in X_d$ solve \eqref{weak} and
\eqref{weak_dual}, respectively. Then
\begin{equation}\label{primal_adjoint_equivalence}
\gamma=\gamma_p=\cJ_p(u)=\cJ_d(z)=\gamma_d,
\end{equation}
and their common value is
\begin{equation}\label{basic_gamma_representation}
\gamma=m(u)+\ell(z)-a(u,z).
\end{equation}
\end{thm}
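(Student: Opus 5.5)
The proof assembles the two measurement problems, which are already characterized by Theorems~\ref{thm:primal_QoI_equivalence} and \ref{thm:dual_QoI_equivalence}. The only new ingredient is that both scalar values equal the single number $m(u)+\ell(z)-a(u,z)$.

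First, we invoke Theorem~\ref{thm:primal_QoI_equivalence}. It gives that the solution of \eqref{problemP} has potential $u$ and that $\gamma_p=\cJ_p(u)=\gamma$. Since \eqref{problemP} holds for every $v\in X_d$, and $z\in X_d$, we may choose $v=z$. This yields
\[
\gamma_p=m(u)+\ell(z)-a(u,z).
\]
If one prefers to argue directly, the same identity follows from \eqref{primal_measurement_identity} with $v=z$.

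Second, we invoke Theorem~\ref{thm:dual_QoI_equivalence}. It gives that the dual measurement problem \eqref{problemD} is solved with potential $z$ and $\gamma_d=\cJ_d(z)$. Since \eqref{problemD} holds for every $w\in X_p$, and $u\in X_p$, we choose $w=u$. This gives
\[
\gamma_d=m(u)+\ell(z)-a(u,z).
\]
The right-hand sides of the two displays coincide. Therefore $\gamma_p=\gamma_d$, and their common value is \eqref{basic_gamma_representation}. Chaining the equalities gives the full string
\[
\gamma=\gamma_p=\cJ_p(u)=\cJ_d(z)=\gamma_d.
\]

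The only point requiring care is that the dual identity $\gamma_d=m(w)+\ell(z)-a(w,z)$ holds for \emph{all} $w\in X_p$, and is not merely defined through some particular $w$. In the proof of Theorem~\ref{thm:dual_QoI_equivalence} this was established by Green's formula using $\gradt\br+cz=g_1$, $\br\cdot\bn=\psi_d$ on $\G_N$, and $w=\phi_p$ on $\G_D$. That argument shows the expression is independent of $w\in X_p$, so the substitution $w=u$ is legitimate. The corresponding independence on the primal side is \eqref{primal_measurement_identity}.

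With both facts taken from the earlier theorems, no further computation is needed.
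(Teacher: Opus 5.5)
Your proposal is correct and is essentially the paper's proof, which takes $v=z$ in \eqref{primal_measurement_identity} and $w=u$ in \eqref{problemD} and chains the results with Theorems~\ref{thm:primal_QoI_equivalence} and \ref{thm:dual_QoI_equivalence}. One small simplification: you do not need Green's formula to see that $m(w)+\ell(z)-a(w,z)$ is independent of $w\in X_p$, because differences of elements of $X_p$ lie in $H_D^1(\O)$ and \eqref{weak_dual} gives this directly.
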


\begin{proof}
Taking $v=z$ in \eqref{primal_measurement_identity} and $w=u$ in
\eqref{problemD}, the identities follow immediately.
\end{proof}

Thus, $\cJ_p(u)$ and $\cJ_d(z)$ are equivalent PDE representations of the same scalar output; the terms ``primal'' and ``adjoint'' only reflect the chosen direction of the construction.

\section{QoI Error Identities for General Conforming Approximations}
\label{sec:qoi-error-identities}

This section contains the core of the formulation-independent part of the
paper. Using the physically meaningful primal and adjoint problems of
Section~\ref{sec:primal-adjoint}, we construct two computable
\emph{corrected} approximations of the quantity of interest
\(\gamma=\cJ_p(u)=\cJ_d(z)\): a potential-only value $\gamma_0$
built from potential approximations, and a flux--potential value
$\gamma_1$ that additionally uses flux approximations. For each we prove an exact error identity,
Theorems~\ref{thm:gamma0_error} and \ref{thm:gamma1_error}, expressing
the output error as a bilinear pairing of the primal and adjoint
approximation errors, and hence a product-type bound. The decisive
feature is that these identities follow \emph{only} from the continuous
primal and adjoint equations: they hold for \emph{arbitrary} conforming
approximations of the potentials, require no Galerkin
orthogonality with respect to the physical bilinear form
$a(\cdot,\cdot)$, and make no reference to how the approximations are
produced. The least-squares method is not used anywhere in this section;
it enters only in Section~\ref{sec:LS-estimators}, where its built-in
functionals turn these identities into computable a~posteriori bounds. The
potential-only identity reformulates, in the present mixed-boundary
setting, the continuous error-correction framework of Giles and
S\"uli~\cite[Section~4]{GS:02}; the flux--potential corrected
quantity $\gamma_1$ and its product identity are, to our knowledge, new.

Let $u\in X_p$ and $z\in X_d$ denote the solutions of the primal and adjoint weak problems, respectively. Thus,
\begin{equation}\label{primal_weak_again}
a(u,s_0)=\ell(s_0)\qquad\forall s_0\in H_D^1(\O),
\end{equation}
and
\begin{equation}\label{dual_weak_again}
a(s_0,z)=m(s_0)\qquad\forall s_0\in H_D^1(\O).
\end{equation}
For arbitrary conforming approximations $(w,v)\in X_p\times X_d$, we have $u-w,z-v\in H_D^1(\O)$. Consequently, \eqref{primal_weak_again} and \eqref{dual_weak_again} imply
\begin{equation}\label{basic_error_identities}
a(u-w,z)=m(u-w),\qquad a(u,z-v)=\ell(z-v).
\end{equation}
These identities follow from the exact primal and adjoint problems, not from Galerkin orthogonality, and hold for every conforming pair $(w,v)\in X_p\times X_d$.

\subsection{A potential-only corrected quantity}
\label{subsec:gamma0}

The corrected-functional construction in this subsection follows the
general primal--dual error-correction framework of Giles and S\"uli
\cite[Section~4]{GS:02}.  We reformulate their continuous
error identity in the present mixed-boundary setting, using the
physical adjoint and the functionals introduced above.

For $(w,v)\in X_p\times X_d$, define the potential-only corrected quantity
\begin{equation}\label{def_gamma0}
\gamma_0(w,v):=m(w)+\ell(v)-a(w,v).
\end{equation}
By \eqref{basic_gamma_representation}, $\gamma_0(u,z)=\gamma$.
The following identity is the counterpart of
\cite[(4.1)]{GS:02} in the present notation.

\begin{thm}\label{thm:gamma0_error}
For every $(w,v)\in X_p\times X_d$,
\begin{equation}\label{first_error_identity}
\gamma-\gamma_0(w,v)=a(u-w,z-v).
\end{equation}
Consequently,
\begin{equation}\label{first_error_bound}
|\gamma-\gamma_0(w,v)|
\le C_{\rm con}\|u-w\|_1\|z-v\|_1.
\end{equation}
\end{thm}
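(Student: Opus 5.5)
The argument is purely algebraic and uses only the representation \eqref{basic_gamma_representation} together with the two identities \eqref{basic_error_identities}; no Galerkin orthogonality enters.

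First, by Theorem~\ref{thm:primal_adjoint_equivalence} I write $\gamma=m(u)+\ell(z)-a(u,z)$. Subtracting the definition \eqref{def_gamma0} of $\gamma_0(w,v)$ and using linearity of $m$ and $\ell$ gives
\[
\gamma-\gamma_0(w,v)=m(u-w)+\ell(z-v)-a(u,z)+a(w,v).
\]

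Second, I expand the target pairing bilinearly:
\[
a(u-w,z-v)=a(u,z)-a(u,v)-a(w,z)+a(w,v).
\]
I then rewrite it by adding and subtracting $a(u,z)$:
\[
a(u-w,z-v)=a(u-w,z)+a(u,z-v)-a(u,z)+a(w,v).
\]
Since $(w,v)\in X_p\times X_d$, both $u-w$ and $z-v$ lie in $H_D^1(\O)$. Hence \eqref{basic_error_identities} applies and gives $a(u-w,z)=m(u-w)$ and $a(u,z-v)=\ell(z-v)$. Substituting these, the right-hand side becomes exactly the expression obtained in the first step, which proves \eqref{first_error_identity}.

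Finally, the bound \eqref{first_error_bound} follows at once from the continuity estimate \eqref{continuity_a} applied with $t=u-w$ and $s=z-v$.

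There is no genuine obstacle. The only point needing care is to confirm that the test functions $u-w$ and $z-v$ in \eqref{basic_error_identities} indeed belong to $H_D^1(\O)$. This holds because $X_p$ and $X_d$ are affine translates of $H_D^1(\O)$ sharing the Dirichlet data of $u$ and $z$, respectively.
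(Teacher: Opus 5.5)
Your proposal is correct and follows essentially the same route as the paper's proof. Like the paper, you subtract \eqref{def_gamma0} from \eqref{basic_gamma_representation}, use the weak equations tested with $u-w,\,z-v\in H_D^1(\O)$ to replace $m(u-w)$ and $\ell(z-v)$ by $a(u-w,z)$ and $a(u,z-v)$, recombine bilinearly into $a(u-w,z-v)$, and then apply \eqref{continuity_a}.
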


\begin{proof}
Using \eqref{basic_gamma_representation}, \eqref{def_gamma0}, and the
primal and adjoint weak equations,
\[
m(u-w)=a(u-w,z),
\qquad
\ell(z-v)=a(u,z-v).
\]
Hence,
\[
\gamma-\gamma_0(w,v)
=
a(u-w,z)+a(u,z-v)-a(u,z)+a(w,v)
=
a(u-w,z-v),
\]
and \eqref{first_error_bound} follows from \eqref{continuity_a}.
\end{proof}

As in the general argument of Giles and S\"uli
\cite[Section~4]{GS:02}, the identity
\eqref{first_error_identity} is entirely continuous: it uses only the
exact primal and adjoint equations and does not require Galerkin
orthogonality of the approximations $w$ and $v$.

\subsection{A flux--potential corrected quantity}
\label{subsec:gamma1}
The potential-only quantity $\gamma_0$ uses only the potential
approximations $w,v$, and its error bound \eqref{first_error_bound}
involves only the $H^1$ potential errors. When flux approximations are
also at hand, from a first-order system least-squares method, a mixed
method, or a flux recovery applied to any other discretization, it is
natural to let the computed fluxes contribute to the output directly.
In this subsection we construct such a value, $\gamma_1$, and prove that
it too satisfies an exact product-type error identity
(Theorem~\ref{thm:gamma1_error}). The construction is purely continuous:
it uses no Galerkin orthogonality, and, as the proof shows, it places
no conformity requirement whatsoever on the flux approximations, which
enter only through $L^2$ pairings. Accordingly, throughout this
subsection
\[
w\in X_p,\qquad v\in X_d,\qquad \btau,\brho\in L^2(\O)^d,
\]
and the functionals \eqref{def_tilde_ell}--\eqref{def_tilde_a} below are
defined on $L^2(\O)^d\times H^1(\O)$. The least-squares method is invoked
only in Section~\ref{sec:LS-estimators}, to make the resulting bound
computable.
%

The exact constitutive equations imply
\[
\nabla u=-A^{-1}(\bsigma+\bff_2),
\qquad
\nabla z=-A^{-1}(\br+\bb z+\bg_2).
\]
Motivated by these identities, define
\begin{align}
\widetilde\ell(\brho,v)
&:=(f_1,v)
  +\bigl(\bff_2,A^{-1}(\brho+\bb v+\bg_2)\bigr)
  -\langle\psi_p,v\rangle_{\G_N},
\label{def_tilde_ell}\\
\widetilde m(\btau,w)
&:=(g_1,w)
  +\bigl(A^{-1}(\btau+\bff_2),\bg_2\bigr)
  -\langle\psi_d,w\rangle_{\G_N},
\label{def_tilde_m}\\
\widetilde a\bigl((\btau,w),(\brho,v)\bigr)
&:=\bigl(A^{-1}(\btau+\bff_2),\brho+\bg_2\bigr)
  +(cw,v).
\label{def_tilde_a}
\end{align}
Equivalently,
\begin{align}
\widetilde\ell(\brho,v)
&=\ell(v)
  +\bigl(\brho+\bg_2+A\nabla v+\bb v,A^{-1}\bff_2\bigr),
\label{tilde_ell_relation}\\
\widetilde m(\btau,w)
&=m(w)
  +\bigl(\btau+\bff_2+A\nabla w,A^{-1}\bg_2\bigr).
\label{tilde_m_relation}
\end{align}
For the exact pairs,
\[
\widetilde\ell(\br,z)=\ell(z),\qquad
\widetilde m(\bsigma,u)=m(u),\qquad
\widetilde a\bigl((\bsigma,u),(\br,z)\bigr)=a(u,z).
\]
The most direct flux--potential analogue of $\gamma_0$ is therefore
\begin{equation}\label{def_tilde_gamma1}
\widetilde\gamma_1(w,\btau,v,\brho)
:=
\widetilde m(\btau,w)+\widetilde\ell(\brho,v)
-\widetilde a\bigl((\btau,w),(\brho,v)\bigr).
\end{equation}

Although $\widetilde\gamma_1$ is consistent, it does not yet have a pure
product-error representation. Set
\[
E_p:=\bsigma-\btau,\qquad e_p:=u-w,\qquad
E_d:=\br-\brho,\qquad e_d:=z-v.
\]
A direct expansion, carried out in Appendix~\ref{app:expansion}, gives
\begin{align}
\gamma-\widetilde\gamma_1(w,\btau,v,\brho)
={}&
\bigl(A^{-1}E_p,E_d\bigr)+(ce_p,e_d)
+\bigl(\btau+A\nabla w+\bff_2,A^{-1}E_d\bigr)
+\bigl(A^{-1}E_p,\brho+\bg_2+A\nabla v+\bb v\bigr)
\nonumber\\
&
+\bigl(\btau+A\nabla w+\bff_2,A^{-1}\brho\bigr)
+\bigl(A^{-1}\btau,\brho+\bg_2+A\nabla v+\bb v\bigr).
\label{tilde_gamma1_error_expansion}
\end{align}
The first four terms are products of primal and adjoint approximation
errors. The last two are first-order defect terms: the first pairs the
primal constitutive defect with the approximate adjoint flux, while the
second pairs the adjoint constitutive defect with the approximate primal
flux. Since both terms are computable, we incorporate them into
the corrected quantity and define
\begin{align}
\gamma_1(w,\btau,v,\brho)
:={}&
\widetilde\gamma_1(w,\btau,v,\brho)
+\bigl(\btau+A\nabla w+\bff_2,A^{-1}\brho\bigr)
+\bigl(A^{-1}\btau,\brho+\bg_2+A\nabla v+\bb v\bigr).
\label{def_gamma1}
\end{align}
Thus, the two additional terms are precisely the computable defects
identified by the error expansion of $\widetilde\gamma_1$. Their
inclusion cancels the remaining first-order contributions and leaves
only products of primal and adjoint errors. In particular,
\[
\gamma_1(u,\bsigma,z,\br)=\gamma.
\]

\begin{thm}\label{thm:gamma1_error}
Let $w\in X_p$ and $v\in X_d$, and let $\btau,\brho\in L^2(\O)^d$ be
arbitrary. Then the corrected quantity \eqref{def_gamma1} satisfies
\begin{align}
\gamma-\gamma_1(w,\btau,v,\brho)
={}&
\bigl(A^{-1}(\bsigma-\btau),\br-\brho\bigr)
+\bigl(c(u-w),z-v\bigr)
\nonumber\\
&+\bigl(\btau+A\nabla w+\bff_2,
        A^{-1}(\br-\brho)\bigr)
+\bigl(A^{-1}(\bsigma-\btau),
        \brho+\bg_2+A\nabla v+\bb v\bigr).
\label{second_error_identity}
\end{align}
Equivalently,
\begin{align}
\gamma-\gamma_1(w,\btau,v,\brho)
={}&
-\bigl(A^{-1}(\bsigma-\btau),\br-\brho\bigr)
+\bigl(c(u-w),z-v\bigr)
\nonumber\\
&-\bigl(\nabla(u-w),\br-\brho\bigr)
-\bigl(\bsigma-\btau,
       \nabla(z-v)+A^{-1}\bb(z-v)\bigr).
\label{second_error_identity_2}
\end{align}
Consequently,
\begin{equation}\label{second_error_bound}
|\gamma-\gamma_1(w,\btau,v,\brho)|
\le C
\bigl(\|\bsigma-\btau\|_0+\|u-w\|_1\bigr)
\bigl(\|\br-\brho\|_0+\|z-v\|_1\bigr),
\end{equation}
with $C$ depending only on $a_0^{-1}$, $\|\bb\|_{0,\infty}$ and
$\|c\|_{0,\infty}$.
\end{thm}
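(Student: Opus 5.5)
The plan is to obtain \eqref{second_error_identity} directly from the expansion \eqref{tilde_gamma1_error_expansion} and the definition \eqref{def_gamma1}, then derive \eqref{second_error_identity_2} by substituting the exact constitutive relations, and finally read off \eqref{second_error_bound} from the Cauchy--Schwarz inequality. Since $\gamma_1=\widetilde\gamma_1+D_p+D_d$, with
\[
D_p:=\bigl(\btau+A\nabla w+\bff_2,A^{-1}\brho\bigr),
\qquad
D_d:=\bigl(A^{-1}\btau,\brho+\bg_2+A\nabla v+\bb v\bigr),
\]
we have $\gamma-\gamma_1=(\gamma-\widetilde\gamma_1)-D_p-D_d$. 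The last two terms of \eqref{tilde_gamma1_error_expansion} are exactly $D_p$ and $D_d$, so they cancel and \eqref{second_error_identity} remains.

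The step I regard as the real work is checking \eqref{tilde_gamma1_error_expansion} itself rather than simply citing the appendix. I would do this by writing
\[
\gamma=\widetilde\gamma_1(u,\bsigma,z,\br)=\widetilde m(\bsigma,u)+\widetilde\ell(\br,z)-\widetilde a\bigl((\bsigma,u),(\br,z)\bigr)
\]
and subtracting $\widetilde\gamma_1(w,\btau,v,\brho)$. Each of $\widetilde m$ and $\widetilde\ell$ is affine in its arguments, and $\widetilde a$ is affine in each slot. Expanding $\widetilde a$ with $\bsigma=\btau+E_p$, $\br=\brho+E_d$, $u=w+e_p$, $z=v+e_d$ produces the product term $(A^{-1}E_p,E_d)+(ce_p,e_d)$ together with cross terms that are linear in the errors. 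These linear terms must then be combined with the differences $\widetilde m(\bsigma,u)-\widetilde m(\btau,w)$ and $\widetilde\ell(\br,z)-\widetilde\ell(\brho,v)$.

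To close this step I must use the exact equations, and not Galerkin orthogonality. Specifically, by \eqref{basic_error_identities} together with Green's formula in the form used in Lemmas~\ref{lem:primal_weak_fos} and \ref{lem:dual_weak_fos}, I can replace $m(e_p)$ by $a(e_p,z)$ and $\ell(e_d)$ by $a(u,e_d)$. I then rewrite $a(\cdot,\cdot)$ through the fluxes, using $\nabla u=-A^{-1}(\bsigma+\bff_2)$ and $\nabla z=-A^{-1}(\br+\bb z+\bg_2)$. Bookkeeping of the $\bff_2$, $\bg_2$ and $\bb$ terms is the main obstacle. I would organize it via the relations \eqref{tilde_ell_relation}--\eqref{tilde_m_relation}, so that every leftover term appears either as a constitutive defect paired with an error or as one of $D_p$, $D_d$.

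For \eqref{second_error_identity_2}, note that $\bsigma=-A\nabla u-\bff_2$ gives
\[
\btau+A\nabla w+\bff_2=-E_p-A\nabla e_p,
\]
and similarly
\[
\brho+\bg_2+A\nabla v+\bb v=-E_d-A\nabla e_d-\bb e_d.
\]
Inserting both into \eqref{second_error_identity}, the three flux terms collapse to
\[
-(A^{-1}E_p,E_d)-(\nabla e_p,E_d)-(E_p,\nabla e_d+A^{-1}\bb e_d),
\]
which together with $(ce_p,e_d)$ is \eqref{second_error_identity_2}. Finally, applying Cauchy--Schwarz termwise to \eqref{second_error_identity_2}, with $\|A^{-1}\|\le a_0^{-1}$ from \eqref{A_uniform_ellipticity} and the $L^\infty$ bounds on $\bb$ and $c$, each term is bounded by a primal error norm times an adjoint error norm. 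This gives \eqref{second_error_bound} with $C$ depending only on $a_0^{-1}$, $\|\bb\|_{0,\infty}$ and $\|c\|_{0,\infty}$.
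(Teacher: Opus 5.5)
Your proposal is correct and follows the paper's proof essentially step for step. You verify the expansion \eqref{tilde_gamma1_error_expansion} as in Appendix~\ref{app:expansion}, from the exact weak equations tested with $e_p,e_d\in H_D^1(\O)$ and the exact constitutive relations; the definition \eqref{def_gamma1} then cancels the two defect terms, substituting $\btau+A\nabla w+\bff_2=-E_p-A\nabla e_p$ and its adjoint analogue yields \eqref{second_error_identity_2}, and Cauchy--Schwarz gives \eqref{second_error_bound}. One small remark: Green's formula is not needed for the replacements $m(e_p)=a(e_p,z)$ and $\ell(e_d)=a(u,e_d)$, since these follow directly from \eqref{basic_error_identities}.
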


\begin{proof}
The expansion \eqref{tilde_gamma1_error_expansion} of
$\gamma-\widetilde\gamma_1$, proved in Appendix~\ref{app:expansion},
uses only the exact constitutive relations
$\bsigma+A\nabla u+\bff_2=0$ and $\br+A\nabla z+\bb z+\bg_2=0$,
together with the primal and adjoint weak equations
\eqref{primal_weak_again}--\eqref{dual_weak_again} tested with
$u-w$ and $z-v$; these test functions lie in $H_D^1(\O)$ precisely
because $w\in X_p$ and $v\in X_d$. No further property of
$\btau,\brho$ is used. The definition \eqref{def_gamma1} of $\gamma_1$
cancels the last two terms of
\eqref{tilde_gamma1_error_expansion}, which gives
\eqref{second_error_identity}.

For \eqref{second_error_identity_2}, the same constitutive relations give
\[
\btau+A\nabla w+\bff_2
=-(\bsigma-\btau)-A\nabla(u-w),
\qquad
\brho+\bg_2+A\nabla v+\bb v
=-(\br-\brho)-A\nabla(z-v)-\bb(z-v).
\]
Substituting these into the third and fourth terms of
\eqref{second_error_identity} and using the symmetry of $A^{-1}$ yields
\eqref{second_error_identity_2}. Finally, \eqref{second_error_bound}
follows from the Cauchy--Schwarz inequality together with
\eqref{A_uniform_ellipticity} and the boundedness of $\bb$ and $c$.
\end{proof}

\begin{rem}\label{rem:gamma1_hypotheses}
The hypotheses of Theorem~\ref{thm:gamma1_error} are those actually used
in its proof. The fluxes occur in
\eqref{def_tilde_ell}--\eqref{def_gamma1} only through $L^2$ pairings:
neither $\gradt\btau$, $\gradt\brho$ nor their normal traces appear
there or anywhere in the argument, so square integrability is all that is
required of $\btau$ and $\brho$. The essential conditions $w=\phi_p$ and
$v=\phi_d$ on $\G_D$, by contrast, cannot be dispensed with: they place
$u-w$ and $z-v$ in $H_D^1(\O)$, which is what allows the primal and
adjoint weak equations to be tested with these functions in
Appendix~\ref{app:expansion}.

The stronger memberships $\btau\in\bSigma_p$ and $\brho\in\bSigma_d$
enter only in Section~\ref{sec:LS-estimators}, where the right-hand side
of \eqref{second_error_bound} is bounded by the least-squares
functionals: these involve $\gradt\btau$ and $\gradt\brho$, and the error
identities \eqref{primal_LS_error_identity},
\eqref{dual_LS_error_identity} use the Neumann conditions. The corrected
value $\gamma_1$ and its error identity are thus available under weaker
hypotheses than the built-in bound of
Theorem~\ref{thm:LS_QoI_estimates}.
\end{rem}

\subsection{Product-order convergence of corrected QoI values}
\label{subsec:product-order}

We now collect the approximation consequences of
Theorems~\ref{thm:gamma0_error} and \ref{thm:gamma1_error}. Let
$X_p^h\subset X_p$ and $X_d^H\subset X_d$ be conforming affine
approximation spaces and define
\[
\epsilon_{p,0}(h):=\inf_{w_h\in X_p^h}\|u-w_h\|_1,
\qquad
\epsilon_{d,0}(H):=\inf_{v_H\in X_d^H}\|z-v_H\|_1.
\]
For conforming flux--potential spaces
\[
\bSigma_p^h\times X_p^h\subset\bSigma_p\times X_p,
\qquad
\bSigma_d^H\times X_d^H\subset\bSigma_d\times X_d,
\]
set
\[
\epsilon_{p,1}(h):=
\inf_{(\btau_h,w_h)\in\bSigma_p^h\times X_p^h}
\bigl(\|\bsigma-\btau_h\|_0+\|u-w_h\|_1\bigr),
\]
and
\[
\epsilon_{d,1}(H):=
\inf_{(\brho_H,v_H)\in\bSigma_d^H\times X_d^H}
\bigl(\|\br-\brho_H\|_0+\|z-v_H\|_1\bigr).
\]

\begin{cor}\label{cor:qoi_rate}
Assume first that $u_h\in X_p^h$ and $z_H\in X_d^H$ satisfy
\[
\|u-u_h\|_1\le C\epsilon_{p,0}(h),
\qquad
\|z-z_H\|_1\le C\epsilon_{d,0}(H).
\]
Then
\begin{equation}\label{gamma0_rate_general}
|\gamma-\gamma_0(u_h,z_H)|
\le C\epsilon_{p,0}(h)\epsilon_{d,0}(H).
\end{equation}
If
\[
\epsilon_{p,0}(h)\le Ch^{s_{p,0}},
\qquad
\epsilon_{d,0}(H)\le CH^{s_{d,0}},
\]
then
\begin{equation}\label{gamma0_rate}
|\gamma-\gamma_0(u_h,z_H)|
\le Ch^{s_{p,0}}H^{s_{d,0}}.
\end{equation}

Assume next that
$(\bsigma_h,u_h)\in\bSigma_p^h\times X_p^h$ and
$(\br_H,z_H)\in\bSigma_d^H\times X_d^H$ satisfy
\[
\|\bsigma-\bsigma_h\|_0+\|u-u_h\|_1
\le C\epsilon_{p,1}(h),
\qquad
\|\br-\br_H\|_0+\|z-z_H\|_1
\le C\epsilon_{d,1}(H).
\]
Then
\begin{equation}\label{gamma1_rate_general}
|\gamma-\gamma_1(u_h,\bsigma_h,z_H,\br_H)|
\le C\epsilon_{p,1}(h)\epsilon_{d,1}(H).
\end{equation}
If
\[
\epsilon_{p,1}(h)\le Ch^{s_{p,1}},
\qquad
\epsilon_{d,1}(H)\le CH^{s_{d,1}},
\]
then
\begin{equation}\label{gamma1_rate}
|\gamma-\gamma_1(u_h,\bsigma_h,z_H,\br_H)|
\le Ch^{s_{p,1}}H^{s_{d,1}}.
\end{equation}
\end{cor}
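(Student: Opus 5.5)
The corollary is a direct consequence of the product bounds already proved, so the plan is simply to substitute the assumed quasi-optimality estimates into them.

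For the potential-only value, take $w=u_h\in X_p^h\subset X_p$ and $v=z_H\in X_d^H\subset X_d$ in Theorem~\ref{thm:gamma0_error}. These are admissible conforming pairs, so \eqref{first_error_bound} gives
\[
|\gamma-\gamma_0(u_h,z_H)|\le C_{\rm con}\|u-u_h\|_1\|z-z_H\|_1 .
\]
Inserting the hypotheses $\|u-u_h\|_1\le C\epsilon_{p,0}(h)$ and $\|z-z_H\|_1\le C\epsilon_{d,0}(H)$ yields \eqref{gamma0_rate_general}, with constant $C_{\rm con}C^2$. The rate statement \eqref{gamma0_rate} then follows by bounding each factor by $Ch^{s_{p,0}}$ and $CH^{s_{d,0}}$.

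For the flux--potential value, apply Theorem~\ref{thm:gamma1_error} with $w=u_h$, $\btau=\bsigma_h$, $v=z_H$, $\brho=\br_H$. The theorem only requires $w\in X_p$, $v\in X_d$ and $\btau,\brho\in L^2(\O)^d$. These hold because $\bSigma_p^h\times X_p^h\subset\bSigma_p\times X_p$ and $\bSigma_d^H\times X_d^H\subset\bSigma_d\times X_d$, with $\bSigma_p,\bSigma_d\subset H(\divvr;\O)\subset L^2(\O)^d$. Estimate \eqref{second_error_bound} then gives
\[
|\gamma-\gamma_1|\le C\bigl(\|\bsigma-\bsigma_h\|_0+\|u-u_h\|_1\bigr)\bigl(\|\br-\br_H\|_0+\|z-z_H\|_1\bigr),
\]
and the assumed bounds by $C\epsilon_{p,1}(h)$ and $C\epsilon_{d,1}(H)$ give \eqref{gamma1_rate_general}. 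The algebraic rate \eqref{gamma1_rate} follows as before.

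There is no real obstacle. The only points to check are admissibility of the discrete approximations in the two theorems, which holds by conformity, and the bookkeeping of constants. The resulting constants depend only on $C_{\rm con}$, $a_0^{-1}$, $\|\bb\|_{0,\infty}$, $\|c\|_{0,\infty}$ and the quasi-optimality constants, and in particular not on $h$ or $H$. Note that no Galerkin orthogonality is used, so the primal and adjoint meshes $h$ and $H$ may be completely independent.
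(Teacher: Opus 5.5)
Your proposal is correct and matches the paper's proof, which likewise obtains the corollary by inserting the discrete approximations into Theorems~\ref{thm:gamma0_error} and~\ref{thm:gamma1_error} and then using the assumed quasi-optimality and rate bounds. Your explicit admissibility check (conforming potentials in $X_p$ and $X_d$, fluxes merely in $L^2(\O)^d$) is exactly what those theorems require.
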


\begin{proof}
The estimates follow directly from
Theorems~\ref{thm:gamma0_error} and \ref{thm:gamma1_error} and the
assumed quasi-optimality bounds.
\end{proof}

When the primal and adjoint problems are approximated on the same mesh,
$H=h$, the preceding estimates become
\[
|\gamma-\gamma_0(u_h,z_h)|
\le Ch^{s_{p,0}+s_{d,0}},
\qquad
|\gamma-\gamma_1(u_h,\bsigma_h,z_h,\br_h)|
\le Ch^{s_{p,1}+s_{d,1}}.
\]
Thus, the convergence order of each corrected quantity is the sum of
the corresponding primal and adjoint approximation orders. In
particular, equal primal and adjoint orders $s$ yield the rate
$O(h^{2s})$.

\begin{rem}\label{rem:qoi_rate_dof}
On a quasi-uniform mesh in $d$ dimensions, $N\simeq h^{-d}$, up to a
fixed factor accounting for the primal and adjoint unknowns. Hence an
$O(h^{s_p+s_d})$ corrected-output error corresponds to
\[
O\!\left(N^{-(s_p+s_d)/d}\right).
\]
In two dimensions, equal primal and adjoint orders $s$ therefore give
$O(N^{-s})$. Under adaptive refinement, the decay with respect to the
number of degrees of freedom depends on the simultaneous approximation
properties of the primal and adjoint solutions.
\end{rem}

\section{A Posteriori Error Estimation by Least-Squares Functionals}
\label{sec:LS-estimators}

In the preceding section, the errors in the corrected quantities were
bounded by products of the primal and adjoint approximation errors. In
this section, we show that the latter errors are controlled by the
least-squares functional estimators associated with the primal and
adjoint first-order systems. Combining the two estimates yields
computable a posteriori bounds for the corrected quantities of interest.

\subsection{Primal and adjoint least-squares functionals}
\label{subsec:LS-functionals}

For $(\btau,w)\in H(\divvr;\O)\times H^1(\O)$, define
\begin{equation}\label{product_norm}
\vert\!\vert\!\vert(\btau,w)\vert\!\vert\!\vert^2
:=\|\btau\|_{H(\divvr;\O)}^2+\|w\|_1^2.
\end{equation}
The least-squares functional associated with the primal first-order system \eqref{fos_primal} is
\begin{equation}\label{LS_functional_primal}
\begin{split}
\mathrm{LS}_p(\btau,w;f_1,\bff_2)
:={}&
\|A^{-1/2}\btau+A^{1/2}\nabla w+A^{-1/2}\bff_2\|_0^2
+\|\gradt\btau+\bb\cdot\nabla w+cw-f_1\|_0^2,
\end{split}
\end{equation}
while the adjoint functional associated with \eqref{fos_dual} is
\begin{equation}\label{LS_functional_dual}
\begin{split}
\mathrm{LS}_d(\brho,v;g_1,\bg_2)
:={}&
\|A^{-1/2}\brho+A^{1/2}\nabla v+A^{-1/2}\bb v+A^{-1/2}\bg_2\|_0^2
+\|\gradt\brho+cv-g_1\|_0^2.
\end{split}
\end{equation}
Both functionals are defined on $H(\divvr;\O)\times H^1(\O)$. Their homogeneous counterparts are $\mathrm{LS}_p(\btau_0,w_0;0,0)$ and $\mathrm{LS}_d(\brho_0,v_0;0,0)$.

\begin{thm}[LS norm equivalence]
\label{thm:LS_norm_equivalence}
Assume that \eqref{infsup} holds. Then there exist constants $c_{\rm LS},C_{\rm LS}>0$ such that
\begin{equation}\label{ls_equivalence_primal}
c_{\rm LS}\vert\!\vert\!\vert(\btau_0,w_0)\vert\!\vert\!\vert^2
\le \mathrm{LS}_p(\btau_0,w_0;0,0)
\le C_{\rm LS}\vert\!\vert\!\vert(\btau_0,w_0)\vert\!\vert\!\vert^2
\end{equation}
for all $(\btau_0,w_0)\in H_N(\divvr;\O)\times H_D^1(\O)$, and
\begin{equation}\label{ls_equivalence_dual}
c_{\rm LS}\vert\!\vert\!\vert(\brho_0,v_0)\vert\!\vert\!\vert^2
\le \mathrm{LS}_d(\brho_0,v_0;0,0)
\le C_{\rm LS}\vert\!\vert\!\vert(\brho_0,v_0)\vert\!\vert\!\vert^2
\end{equation}
for all $(\brho_0,v_0)\in H_N(\divvr;\O)\times H_D^1(\O)$.
\end{thm}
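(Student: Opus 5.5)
The upper bounds in \eqref{ls_equivalence_primal} and \eqref{ls_equivalence_dual} follow from the triangle inequality and \eqref{A_uniform_ellipticity}: each residual term is a sum of $L^2$ norms of $A^{-1/2}\btau_0$, $A^{1/2}\nabla w_0$, $\gradt\btau_0$, $\bb\cdot\nabla w_0$, $cw_0$ (respectively $A^{-1/2}\bb v_0$), each bounded by $\vert\!\vert\!\vert(\btau_0,w_0)\vert\!\vert\!\vert$ with constants depending on $a_0,a_1,\|\bb\|_{0,\infty},\|c\|_{0,\infty}$. The substance is the lower bound. I will argue by the standard splitting: first control the potential, then the flux. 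Throughout, write the residuals as $\bF:=\btau_0+A\nabla w_0$ and $G:=\gradt\btau_0+\bb\cdot\nabla w_0+cw_0$ in the primal case, so that $\mathrm{LS}_p(\btau_0,w_0;0,0)\simeq\|\bF\|_0^2+\|G\|_0^2$.

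\emph{Step 1 (potential bound).} For any $s_0\in H_D^1(\O)$, Green's formula together with $\btau_0\cdot\bn=0$ on $\G_N$ and $s_0=0$ on $\G_D$ gives $(\gradt\btau_0,s_0)=-(\btau_0,\nabla s_0)$. Hence
\[
a(w_0,s_0)=(A\nabla w_0,\nabla s_0)+(\bb\cdot\nabla w_0+cw_0,s_0)
=(\bF,\nabla s_0)+(G,s_0).
\]
By the inf--sup condition \eqref{infsup} (first form), we obtain
$\beta\|w_0\|_1\le\sup_{s_0}\frac{a(w_0,s_0)}{\|s_0\|_1}\le\|\bF\|_0+\|G\|_0$.
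This is the key use of \eqref{infsup}. It replaces coercivity, which is not available for general $\bb,c$, and it is exactly why the boundary spaces $H_N(\divvr;\O)\times H_D^1(\O)$ are paired so that the boundary term vanishes.

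\emph{Step 2 (flux bound).} We have $\|\btau_0\|_0\le\|\bF\|_0+a_1\|\nabla w_0\|_0$, and
$\|\gradt\btau_0\|_0\le\|G\|_0+(\|\bb\|_{0,\infty}+\|c\|_{0,\infty})\|w_0\|_1$.
Inserting Step 1 yields $\vert\!\vert\!\vert(\btau_0,w_0)\vert\!\vert\!\vert\le C(\|\bF\|_0+\|G\|_0)$. Finally, $\|\bF\|_0\le a_0^{-1/2}\cdots$; more precisely, $\|\bF\|_0\le a_1^{1/2}\|A^{-1/2}\bF\|_0$, which gives the lower bound with $c_{\rm LS}$ depending on $\beta,a_0,a_1,\bb,c$.

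\emph{Step 3 (adjoint case).} Here $\bF_d:=\brho_0+A\nabla v_0+\bb v_0$ and $G_d:=\gradt\brho_0+cv_0$. For $s_0\in H_D^1(\O)$,
\[
a(s_0,v_0)=(\nabla s_0,A\nabla v_0+\bb v_0)+(cs_0,v_0)=(\nabla s_0,\bF_d)+(s_0,G_d),
\]
again using $(\gradt\brho_0,s_0)=-(\brho_0,\nabla s_0)$. The second form of \eqref{infsup} then bounds $\|v_0\|_1$, and Step 2 proceeds verbatim, now with the extra term $\|\bb v_0\|_0\le\|\bb\|_{0,\infty}\|v_0\|_1$. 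Taking $c_{\rm LS},C_{\rm LS}$ as the worse of the primal and dual constants gives common constants.

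The main obstacle is only Step 1: one must verify the integration by parts in the normal-trace sense for $\btau_0\in H_N(\divvr;\O)$ against $s_0\in H_D^1(\O)$. This is the very definition of the homogeneous Neumann condition used in Section~\ref{sec:model}, so $\langle\btau_0\cdot\bn,s_0\rangle_{\partial\O}=0$, and Step 1 goes through.
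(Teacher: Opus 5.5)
Your argument is correct and is essentially the standard proof the paper relies on. The paper does not prove the theorem itself but defers in Remark~\ref{rem:LS_norm_equivalence} to \cite{Ku:07,Zhang:23}, where, as in your proposal, the potential is bounded by applying the inf--sup condition \eqref{infsup} to the identity $a(w_0,s_0)=(\bF,\nabla s_0)+(G,s_0)$ (and to its adjoint analogue via the second inf--sup form), and the flux and its divergence are then bounded by the triangle inequality. The only blemish is the stray ``$\|\bF\|_0\le a_0^{-1/2}\cdots$'' in Step~2, which you immediately replace with the correct bound $\|\bF\|_0\le a_1^{1/2}\|A^{-1/2}\bF\|_0$.
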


\begin{rem}\label{rem:LS_norm_equivalence}
The equivalences \eqref{ls_equivalence_primal} and \eqref{ls_equivalence_dual} are the standard stability estimates for the first-order least-squares formulations of the primal and adjoint problems, see \cite{CLMM:94,BLP:97,Cai:04,Ku:07,CZ:10b,CFZ:15}. A recent simple proof can be found in \cite{Zhang:23}. 
\end{rem}

\subsection{The primal least-squares functional estimator}
\label{subsec:primal-LS-estimator}

Let $(\btau,w)\in\bSigma_p\times X_p$. Since the exact pair $(\bsigma,u)$ satisfies \eqref{fos_primal}, subtraction gives
\begin{equation}\label{primal_LS_error_identity}
\mathrm{LS}_p(\btau,w;f_1,\bff_2)
=
\mathrm{LS}_p(\btau-\bsigma,w-u;0,0).
\end{equation}
Moreover, $\btau-\bsigma\in H_N(\divvr;\O)$ and $w-u\in H_D^1(\O)$. Define
\begin{equation}\label{eta_p}
\eta_p^{\mathrm{LS}}(\btau,w)
:=
\mathrm{LS}_p(\btau,w;f_1,\bff_2)^{1/2}.
\end{equation}

\begin{thm}\label{thm:primal_LS_estimator}
For every $(\btau,w)\in\bSigma_p\times X_p$,
\begin{equation}\label{primal_LS_estimator_equivalence}
c\vert\!\vert\!\vert(\bsigma-\btau,u-w)\vert\!\vert\!\vert
\le
\eta_p^{\mathrm{LS}}(\btau,w)
\le
C\vert\!\vert\!\vert(\bsigma-\btau,u-w)\vert\!\vert\!\vert.
\end{equation}
\end{thm}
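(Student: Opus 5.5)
The estimate follows directly from the error identity \eqref{primal_LS_error_identity} combined with the homogeneous norm equivalence \eqref{ls_equivalence_primal} of Theorem~\ref{thm:LS_norm_equivalence}. All the analytic work is contained in that theorem; what remains is to check that the error pair lies in the space where the equivalence holds.

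First, fix $(\btau,w)\in\bSigma_p\times X_p$. Because the exact pair $(\bsigma,u)$ solves \eqref{fos_primal}, it lies in $\bSigma_p\times X_p$. Both $\bSigma_p$ and $X_p$ are affine translates of $H_N(\divvr;\O)$ and $H_D^1(\O)$, by \eqref{Sigma_p} and \eqref{Xp}. Hence $\btau-\bsigma\in H_N(\divvr;\O)$ and $w-u\in H_D^1(\O)$.

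Second, verify \eqref{primal_LS_error_identity} by linearity. Using $\bsigma+A\nabla u=-\bff_2$, the constitutive residual of $(\btau,w)$ becomes
\[
A^{-1/2}\btau+A^{1/2}\nabla w+A^{-1/2}\bff_2=A^{-1/2}(\btau-\bsigma)+A^{1/2}\nabla(w-u).
\]
Using $\gradt\bsigma+\bb\cdot\nabla u+cu=f_1$, the divergence residual becomes
\[
\gradt(\btau-\bsigma)+\bb\cdot\nabla(w-u)+c(w-u).
\]
Together these give $\mathrm{LS}_p(\btau,w;f_1,\bff_2)=\mathrm{LS}_p(\btau-\bsigma,w-u;0,0)$.

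Third, apply \eqref{ls_equivalence_primal} with $(\btau_0,w_0)=(\btau-\bsigma,w-u)$ and take square roots. This gives
\[
c_{\rm LS}^{1/2}\vert\!\vert\!\vert(\bsigma-\btau,u-w)\vert\!\vert\!\vert\le\eta_p^{\mathrm{LS}}(\btau,w)\le C_{\rm LS}^{1/2}\vert\!\vert\!\vert(\bsigma-\btau,u-w)\vert\!\vert\!\vert,
\]
where we used that the triple norm is invariant under sign change. The claim follows with $c=c_{\rm LS}^{1/2}$ and $C=C_{\rm LS}^{1/2}$.

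The only real obstacle is the lower bound (coercivity) inside Theorem~\ref{thm:LS_norm_equivalence}, which is assumed here. If it had to be proved, I would argue by contradiction and compactness, or as in \cite{Zhang:23}. The idea is to use the inf--sup condition \eqref{infsup} to control $\|w_0\|_1$ by the residuals, and then recover $\|\btau_0\|_{H(\divvr;\O)}$ from the constitutive and divergence residuals. The one point needing care is that the boundary conditions $\btau_0\cdot\bn=0$ on $\G_N$ and $w_0=0$ on $\G_D$ make the boundary term in Green's formula vanish.
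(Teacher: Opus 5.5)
Your proof is correct and follows the paper's route. The paper also reduces the functional to the homogeneous one at the error pair via \eqref{primal_LS_error_identity}, notes that this pair lies in $H_N(\divvr;\O)\times H_D^1(\O)$, and applies \eqref{ls_equivalence_primal}, which gives the bounds with $c=c_{\rm LS}^{1/2}$ and $C=C_{\rm LS}^{1/2}$.
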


\begin{proof}
The result follows directly from \eqref{primal_LS_error_identity} and the primal norm equivalence \eqref{ls_equivalence_primal}.
\end{proof}
\subsection{The adjoint least-squares functional estimator}
\label{subsec:dual-LS-estimator}

Let $(\brho,v)\in\bSigma_d\times X_d$. Since the exact pair $(\br,z)$ satisfies \eqref{fos_dual}, subtraction gives
\begin{equation}\label{dual_LS_error_identity}
\mathrm{LS}_d(\brho,v;g_1,\bg_2)
=
\mathrm{LS}_d(\brho-\br,v-z;0,0).
\end{equation}
Moreover, $\brho-\br\in H_N(\divvr;\O)$ and $v-z\in H_D^1(\O)$. Define
\begin{equation}\label{eta_d}
\eta_d^{\mathrm{LS}}(\brho,v)
:=
\mathrm{LS}_d(\brho,v;g_1,\bg_2)^{1/2}.
\end{equation}

\begin{thm}\label{thm:dual_LS_estimator}
For every $(\brho,v)\in\bSigma_d\times X_d$,
\begin{equation}\label{dual_LS_estimator_equivalence}
c\vert\!\vert\!\vert(\br-\brho,z-v)\vert\!\vert\!\vert
\le
\eta_d^{\mathrm{LS}}(\brho,v)
\le
C\vert\!\vert\!\vert(\br-\brho,z-v)\vert\!\vert\!\vert.
\end{equation}
\end{thm}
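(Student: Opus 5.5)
The argument mirrors the proof of Theorem~\ref{thm:primal_LS_estimator}. It combines the error identity \eqref{dual_LS_error_identity} with the adjoint norm equivalence \eqref{ls_equivalence_dual} of Theorem~\ref{thm:LS_norm_equivalence}.

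\emph{Step 1: the error identity.} I first check that \eqref{dual_LS_error_identity} holds for every $(\brho,v)\in\bSigma_d\times X_d$. The exact pair $(\br,z)$ satisfies the first-order system \eqref{fos_dual} by Lemma~\ref{lem:dual_weak_fos}. Hence
\[
\br+A\nabla z+\bb z+\bg_2=0
\qquad\text{and}\qquad
\gradt\br+cz-g_1=0.
\]
Subtracting these from the two residuals in \eqref{LS_functional_dual}, each residual becomes the homogeneous residual of the difference $(\brho-\br,v-z)$, because every operator in \eqref{LS_functional_dual} is linear. This gives
\[
\eta_d^{\mathrm{LS}}(\brho,v)^2=\mathrm{LS}_d(\brho-\br,v-z;0,0).
\]

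\emph{Step 2: the difference is admissible.} Next I verify that $(\brho-\br,v-z)$ lies in the space where \eqref{ls_equivalence_dual} applies.
\begin{itemize}
\item Both $\brho$ and $\br$ belong to $\bSigma_d=\br_d+H_N(\divvr;\O)$. So they have the same normal trace $\psi_d$ on $\G_N$, and $\brho-\br\in H_N(\divvr;\O)$.
\item Both $v$ and $z$ belong to $X_d=z_d+H_D^1(\O)$. So $v-z\in H_D^1(\O)$.
\end{itemize}

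\emph{Step 3: apply the equivalence.} Applying \eqref{ls_equivalence_dual} to $(\brho_0,v_0)=(\brho-\br,v-z)$ and taking square roots gives the two-sided bound \eqref{dual_LS_estimator_equivalence}, with $c=c_{\rm LS}^{1/2}$ and $C=C_{\rm LS}^{1/2}$. The triple norm is unchanged under the sign flip $(\brho-\br,v-z)\mapsto(\br-\brho,z-v)$, so the bound holds as stated.

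\emph{Where the difficulty lies.} There is no real obstacle here; all the substance is in the norm equivalence \eqref{ls_equivalence_dual}, which we take as given (Remark~\ref{rem:LS_norm_equivalence}). Its proof relies on the second inf--sup condition in \eqref{infsup}, which provides the well-posedness of the adjoint operator $a(s_0,\cdot)$ on $H_D^1(\O)$. That condition is also what makes the constants independent of $(\brho,v)$.

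The only point needing care is the Neumann condition in Step~2. It holds in the normal-trace sense, so membership $\brho-\br\in H_N(\divvr;\O)$ should be read as
\[
\langle(\brho-\br)\cdot\bn,s_0\rangle_{\partial\O}=0
\qquad\forall s_0\in H_D^1(\O),
\]
which is exactly the convention under which \eqref{HNdiv} and \eqref{Sigma_d} are defined.
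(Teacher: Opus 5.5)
Your proposal is correct and follows the paper's proof: the paper likewise obtains the bound directly from the error identity \eqref{dual_LS_error_identity} and the adjoint norm equivalence \eqref{ls_equivalence_dual}, applied to $(\brho-\br,v-z)\in H_N(\divvr;\O)\times H_D^1(\O)$. Your Steps 1 and 2 spell out the linearity and membership checks that the paper records in the paragraph preceding the theorem.
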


\begin{proof}
The result follows directly from \eqref{dual_LS_error_identity} and the adjoint norm equivalence \eqref{ls_equivalence_dual}.
\end{proof}

Theorems~\ref{thm:primal_LS_estimator} and \ref{thm:dual_LS_estimator} show that the primal and adjoint least-squares functionals are globally reliable and efficient estimators of the corresponding flux--potential errors.


\subsection{A posteriori estimates for the corrected QoI values}
\label{subsec:LS-QoI-estimates}

We now combine the product error estimates of Section~\ref{sec:qoi-error-identities} with the primal and adjoint least-squares functional estimators.

\begin{thm}\label{thm:LS_QoI_estimates}
Let $(\btau,w)\in\bSigma_p\times X_p$ and $(\brho,v)\in\bSigma_d\times X_d$. Then
\begin{equation}\label{LS_gamma0_estimate}
|\gamma-\gamma_0(w,v)|
\le C\,\eta_p^{\mathrm{LS}}(\btau,w)\eta_d^{\mathrm{LS}}(\brho,v),
\end{equation}
and
\begin{equation}\label{LS_gamma1_estimate}
|\gamma-\gamma_1(w,\btau,v,\brho)|
\le C\,\eta_p^{\mathrm{LS}}(\btau,w)\eta_d^{\mathrm{LS}}(\brho,v).
\end{equation}
\end{thm}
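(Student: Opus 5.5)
The proof chains the product error bounds of Section~\ref{sec:qoi-error-identities} with the reliability halves of the least-squares estimator equivalences.

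For \eqref{LS_gamma0_estimate}, start from \eqref{first_error_bound} of Theorem~\ref{thm:gamma0_error}. This gives $|\gamma-\gamma_0(w,v)|\le C_{\rm con}\|u-w\|_1\|z-v\|_1$, and it holds because $w\in X_p$ and $v\in X_d$. By definition \eqref{product_norm}, $\|u-w\|_1\le\vert\!\vert\!\vert(\bsigma-\btau,u-w)\vert\!\vert\!\vert$. Since $(\btau,w)\in\bSigma_p\times X_p$, the lower bound in \eqref{primal_LS_estimator_equivalence} of Theorem~\ref{thm:primal_LS_estimator} then gives
\[
\|u-w\|_1\le c^{-1}\eta_p^{\mathrm{LS}}(\btau,w).
\]
In the same way, Theorem~\ref{thm:dual_LS_estimator} gives $\|z-v\|_1\le c^{-1}\eta_d^{\mathrm{LS}}(\brho,v)$. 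Multiplying the two bounds yields \eqref{LS_gamma0_estimate} with $C=C_{\rm con}c^{-2}$. Note that the fluxes $\btau,\brho$ serve here only to make the estimators computable and do not enter $\gamma_0$.

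For \eqref{LS_gamma1_estimate}, use \eqref{second_error_bound} of Theorem~\ref{thm:gamma1_error}. It applies because $\bSigma_p,\bSigma_d\subset H(\divvr;\O)\subset L^2(\O)^d$, so the hypotheses of that theorem (recalled in Remark~\ref{rem:gamma1_hypotheses}) hold. Next observe that
\[
\|\bsigma-\btau\|_0+\|u-w\|_1\le\sqrt2\,\vert\!\vert\!\vert(\bsigma-\btau,u-w)\vert\!\vert\!\vert,
\]
since the $L^2$ norm is dominated by the $H(\divvr;\O)$ norm. The analogous bound holds for the adjoint errors. Applying the lower bounds of Theorems~\ref{thm:primal_LS_estimator} and \ref{thm:dual_LS_estimator} once more gives
\[
|\gamma-\gamma_1|\le 2Cc^{-2}\,\eta_p^{\mathrm{LS}}\eta_d^{\mathrm{LS}}.
\]

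No step here is a genuine obstacle. The one point needing care is that the estimator equivalences require the full conformity $(\btau,w)\in\bSigma_p\times X_p$ and $(\brho,v)\in\bSigma_d\times X_d$, including the Neumann normal-trace conditions. Only with these memberships do the identities \eqref{primal_LS_error_identity} and \eqref{dual_LS_error_identity} place the errors in $H_N(\divvr;\O)\times H_D^1(\O)$, where the norm equivalence of Theorem~\ref{thm:LS_norm_equivalence} applies. This is exactly what the hypotheses of the theorem supply. The resulting constant depends on $C_{\rm con}$, $c_{\rm LS}$, $a_0$, $\|\bb\|_{0,\infty}$ and $\|c\|_{0,\infty}$.
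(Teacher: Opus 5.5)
Your proposal is correct and follows the paper's proof: it takes the product bounds of Theorems~\ref{thm:gamma0_error} and \ref{thm:gamma1_error}, bounds each error factor by the corresponding product norm, and then applies the reliability halves of \eqref{primal_LS_estimator_equivalence} and \eqref{dual_LS_estimator_equivalence}. Your explicit constants, $C_{\rm con}c^{-2}$ and the $\sqrt2$ factor for the sum of the flux and potential errors, are a harmless refinement of what the paper leaves implicit.
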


\begin{proof}
By Theorem~\ref{thm:gamma0_error},
\[
|\gamma-\gamma_0(w,v)|
\le C\|u-w\|_1\|z-v\|_1.
\]
Since
\[
\|u-w\|_1
\le \vert\!\vert\!\vert(\bsigma-\btau,u-w)\vert\!\vert\!\vert,
\qquad
\|z-v\|_1
\le \vert\!\vert\!\vert(\br-\brho,z-v)\vert\!\vert\!\vert,
\]
estimate \eqref{LS_gamma0_estimate} follows from \eqref{primal_LS_estimator_equivalence} and \eqref{dual_LS_estimator_equivalence}.

Similarly, Theorem~\ref{thm:gamma1_error} gives
\[
\begin{split}
|\gamma-\gamma_1(w,\btau,v,\brho)|
\le C&
\bigl(\|\bsigma-\btau\|_0+\|u-w\|_1\bigr)
\bigl(\|\br-\brho\|_0+\|z-v\|_1\bigr).
\end{split}
\]
Each factor is bounded by the corresponding product norm, so \eqref{primal_LS_estimator_equivalence} and \eqref{dual_LS_estimator_equivalence} yield \eqref{LS_gamma1_estimate}.
\end{proof}

\begin{rem}\label{rem:computable_QoI_estimators}
The estimates \eqref{LS_gamma0_estimate} and \eqref{LS_gamma1_estimate} are computable directly from the residuals of the physical primal and adjoint first-order systems. They require neither Galerkin orthogonality with respect to $a(\cdot,\cdot)$ nor replacement of the PDE adjoint by the adjoint induced by the least-squares bilinear form.
\end{rem}

\section{Least-Squares Finite Element Discretization and
Goal-Oriented Estimators}
\label{sec:LSFEM}

In this section, we specialize the least-squares functional estimators
of Section~\ref{sec:LS-estimators} to conforming least-squares finite
element approximations. Both the primal and adjoint problems are solved
on the same mesh. Since the corrected-output identities of Section~\ref{sec:qoi-error-identities}
hold for arbitrary conforming approximations, the least-squares finite
element solutions may be inserted directly into those identities. The
resulting QoI bounds are computable from the native residuals of the
physical primal and adjoint first-order systems.

\subsection{Finite element spaces}
\label{subsec:finite-element-spaces}

Let $\mathcal T_h$ be a shape-regular conforming simplicial triangulation of $\O$, possibly obtained by adaptive refinement; quasi-uniformity is assumed only when explicitly stated. For $K\in\mathcal T_h$, let $\mathbb P_k(K)$ denote the polynomials of total degree at most $k$ on $K$.

We use the lowest-order Raviart--Thomas space for the flux variables,
\begin{equation}\label{Sigma_h}
\bSigma_h:=\{\btau_h\in H(\divvr;\O):\btau_h|_K\in\mathbb P_0(K)^d+\bx\mathbb P_0(K)\ \forall K\in\mathcal T_h\},
\end{equation}
and the continuous piecewise linear space for the potential variables,
\begin{equation}\label{X_h}
X_h:=\{v_h\in C^0(\overline\O):v_h|_K\in\mathbb P_1(K)\ \forall K\in\mathcal T_h\}.
\end{equation}
The corresponding homogeneous boundary subspaces are
\begin{equation}\label{Sigma_h_N}
\bSigma_{h,N}:=\bSigma_h\cap H_N(\divvr;\O),
\qquad
X_{h,D}:=X_h\cap H_D^1(\O).
\end{equation}

This choice balances the principal approximation orders of the flux and potential components. On a quasi-uniform mesh, for sufficiently smooth functions,
\[
\inf_{\btau_h\in\bSigma_h}\|\bsigma-\btau_h\|_0\lesssim h,
\qquad
\inf_{w_h\in X_h}\|u-w_h\|_1\lesssim h.
\]
Thus, the $L^2$ approximation of the flux and the $H^1$ approximation of the potential have the same leading-order rate; the same applies to $(\br,z)$. Approximation in the full $H(\divvr;\O)$ norm additionally requires the corresponding regularity and approximation of the divergence.

Let $\mathcal T_0$ be the initial mesh, and let $X_0$ and $\bSigma_0$ denote the spaces \eqref{X_h} and \eqref{Sigma_h} associated with $\mathcal T_0$. For simplicity, assume that the boundary data are exactly representable on $\mathcal T_0$:
\begin{equation}\label{discrete_boundary_data_assumption}
\phi_p,\phi_d\in X_0|_{\G_D},
\qquad
\psi_p,\psi_d\in(\bSigma_0\cdot\bn)|_{\G_N}.
\end{equation}
Since all subsequent meshes are obtained by conforming refinement, the data remain representable on every refined mesh.

Define the discrete affine primal spaces
\begin{equation}\label{discrete_primal_affine_spaces}
X_p^h:=\{w_h\in X_h:w_h=\phi_p\text{ on }\G_D\},
\qquad
\bSigma_p^h:=\{\btau_h\in\bSigma_h:\btau_h\cdot\bn=\psi_p\text{ on }\G_N\},
\end{equation}
and the discrete affine adjoint spaces
\begin{equation}\label{discrete_dual_affine_spaces}
X_d^h:=\{v_h\in X_h:v_h=\phi_d\text{ on }\G_D\},
\qquad
\bSigma_d^h:=\{\brho_h\in\bSigma_h:\brho_h\cdot\bn=\psi_d\text{ on }\G_N\}.
\end{equation}
Then
\[
X_p^h\subset X_p,\qquad \bSigma_p^h\subset\bSigma_p,
\qquad
X_d^h\subset X_d,\qquad \bSigma_d^h\subset\bSigma_d.
\]

\subsection{The primal least-squares finite element method}
\label{subsec:primal-LSFEM}

The primal least-squares finite element approximation is the pair $(\bsigma_h,u_h)\in\bSigma_p^h\times X_p^h$ satisfying
\begin{equation}\label{discrete_primal_LS_min}
\mathrm{LS}_p(\bsigma_h,u_h;f_1,\bff_2)
=
\inf_{(\btau_h,w_h)\in\bSigma_p^h\times X_p^h}
\mathrm{LS}_p(\btau_h,w_h;f_1,\bff_2).
\end{equation}

For $(\brho,v),(\btau,w)\in H(\divvr;\O)\times H^1(\O)$, define
\begin{equation}\label{blsp}
\begin{split}
b_{{\rm ls},p}\bigl((\brho,v),(\btau,w)\bigr)
:={}&(\brho+A\nabla v,A^{-1}\btau+\nabla w)+(\gradt\brho+\bb\cdot\nabla v+cv,
\gradt\btau+\bb\cdot\nabla w+cw),
\end{split}
\end{equation}
and
\begin{equation}\label{Flsp}
F_{{\rm ls},p}(\btau,w)
:=-(\bff_2,A^{-1}\btau+\nabla w)
+(f_1,\gradt\btau+\bb\cdot\nabla w+cw).
\end{equation}
Then \eqref{discrete_primal_LS_min} is equivalent to find $(\bsigma_h,u_h)\in\bSigma_p^h\times X_p^h$, 
\begin{equation}\label{discrete_primal_LS_weak}
b_{{\rm ls},p}\bigl((\bsigma_h,u_h),(\btau_h,w_h)\bigr)
=
F_{{\rm ls},p}(\btau_h,w_h)
\qquad
\forall(\btau_h,w_h)\in\bSigma_{h,N}\times X_{h,D}.
\end{equation}
The bilinear form $b_{{\rm ls},p}$ is used only to compute the primal least-squares approximation; it is distinct from the physical weak form $a(\cdot,\cdot)$ and does not determine the physical adjoint problem.

By the primal least-squares norm equivalence,
\begin{equation}\label{coercivity_primal_LS}
b_{{\rm ls},p}\bigl((\btau_0,w_0),(\btau_0,w_0)\bigr)
=
\mathrm{LS}_p(\btau_0,w_0;0,0)
\ge
c_{\rm LS}\vert\!\vert\!\vert(\btau_0,w_0)\vert\!\vert\!\vert^2
\end{equation}
for all $(\btau_0,w_0)\in H_N(\divvr;\O)\times H_D^1(\O)$. Hence \eqref{discrete_primal_LS_weak} is uniquely solvable on every conforming mesh, without a mesh-size restriction.

\begin{thm}\label{thm:primal_LS_quasioptimal}
Let $(\bsigma,u)\in\bSigma_p\times X_p$ be the exact primal first-order solution, and let $(\bsigma_h,u_h)\in\bSigma_p^h\times X_p^h$ solve \eqref{discrete_primal_LS_min}. Then
\begin{equation}\label{primal_LS_quasioptimal}
\vert\!\vert\!\vert(\bsigma-\bsigma_h,u-u_h)\vert\!\vert\!\vert
\le
C\inf_{(\btau_h,w_h)\in\bSigma_p^h\times X_p^h}
\vert\!\vert\!\vert(\bsigma-\btau_h,u-w_h)\vert\!\vert\!\vert.
\end{equation}
Moreover,
\begin{equation}\label{eta_p_discrete}
c\vert\!\vert\!\vert(\bsigma-\bsigma_h,u-u_h)\vert\!\vert\!\vert
\le
\eta_p^{\mathrm{LS}}(\bsigma_h,u_h)
\le
C\vert\!\vert\!\vert(\bsigma-\bsigma_h,u-u_h)\vert\!\vert\!\vert.
\end{equation}
\end{thm}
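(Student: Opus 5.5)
The argument is the standard Céa-type estimate for a coercive, symmetric least-squares minimization, carried out on affine spaces. The second claim \eqref{eta_p_discrete} needs no new work: since $(\bsigma_h,u_h)\in\bSigma_p^h\times X_p^h\subset\bSigma_p\times X_p$, it is a special case of Theorem~\ref{thm:primal_LS_estimator}. So the effort goes into \eqref{primal_LS_quasioptimal}.

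First I will establish Galerkin orthogonality in the least-squares inner product. Since $\bsigma\cdot\bn=\psi_p$ on $\G_N$ and $u=\phi_p$ on $\G_D$, the exact pair satisfies $\gradt\bsigma+\bb\cdot\nabla u+cu=f_1$ and $\bsigma+A\nabla u=-\bff_2$. Substituting these into \eqref{blsp} shows
\[
b_{{\rm ls},p}\bigl((\bsigma,u),(\btau,w)\bigr)=F_{{\rm ls},p}(\btau,w)
\]
for all $(\btau,w)\in H(\divvr;\O)\times H^1(\O)$. Subtracting \eqref{discrete_primal_LS_weak} then gives
\[
b_{{\rm ls},p}\bigl((\bsigma-\bsigma_h,u-u_h),(\btau_h,w_h)\bigr)=0
\qquad\forall(\btau_h,w_h)\in\bSigma_{h,N}\times X_{h,D}.
\]

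Next, fix any $(\btau_h,w_h)\in\bSigma_p^h\times X_p^h$. The difference $(\btau_h-\bsigma_h,w_h-u_h)$ lies in $\bSigma_{h,N}\times X_{h,D}$, because both pairs carry the same discrete boundary data. Let $E:=(\bsigma-\bsigma_h,u-u_h)$. Since $b_{{\rm ls},p}$ is symmetric and $b_{{\rm ls},p}(\cdot,\cdot)=\mathrm{LS}_p(\cdot;0,0)$, orthogonality yields the best-approximation identity
\[
\mathrm{LS}_p(E;0,0)=b_{{\rm ls},p}\bigl(E,(\bsigma-\btau_h,u-w_h)\bigr)\le \mathrm{LS}_p(E;0,0)^{1/2}\,\mathrm{LS}_p(\bsigma-\btau_h,u-w_h;0,0)^{1/2}.
\]
This is equivalently a Pythagorean relation, or it follows directly from \eqref{primal_LS_error_identity} and the minimization property \eqref{discrete_primal_LS_min}:
\[
\mathrm{LS}_p(E;0,0)=\mathrm{LS}_p(\bsigma_h,u_h;f_1,\bff_2)\le\mathrm{LS}_p(\btau_h,w_h;f_1,\bff_2)=\mathrm{LS}_p(\bsigma-\btau_h,u-w_h;0,0).
\]
The minimization route is the cleanest, and I will use it, so orthogonality is only a cross-check.

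Finally, both $E$ and $(\bsigma-\btau_h,u-w_h)$ lie in $H_N(\divvr;\O)\times H_D^1(\O)$. Applying the lower bound of \eqref{ls_equivalence_primal} to $E$ and the upper bound to $(\bsigma-\btau_h,u-w_h)$ gives $\vert\!\vert\!\vert E\vert\!\vert\!\vert\le (C_{\rm LS}/c_{\rm LS})^{1/2}\vert\!\vert\!\vert(\bsigma-\btau_h,u-w_h)\vert\!\vert\!\vert$. Taking the infimum over $(\btau_h,w_h)$ completes \eqref{primal_LS_quasioptimal}.

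The only delicate point is confirming that the error and the comparison pairs lie in the homogeneous spaces where Theorem~\ref{thm:LS_norm_equivalence} applies. This is exactly where the exact representability of the boundary data, \eqref{discrete_boundary_data_assumption}, is used, together with the inclusions $X_p^h\subset X_p$ and $\bSigma_p^h\subset\bSigma_p$.
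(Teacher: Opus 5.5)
Your proof is correct and is essentially the paper's argument. The paper invokes Céa's lemma via continuity and coercivity of $b_{{\rm ls},p}$ on $H_N(\divvr;\O)\times H_D^1(\O)$; your minimization route through \eqref{primal_LS_error_identity} and \eqref{ls_equivalence_primal} is simply the symmetric form of that argument, giving the constant $(C_{\rm LS}/c_{\rm LS})^{1/2}$, and \eqref{eta_p_discrete} is, as in the paper, a special case of Theorem~\ref{thm:primal_LS_estimator}. (Minor point: the inclusions $\bSigma_p^h\times X_p^h\subset\bSigma_p\times X_p$ hold by definition, so \eqref{discrete_boundary_data_assumption} is needed only to make the discrete affine spaces nonempty.)
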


\begin{proof}
The continuity and coercivity of $b_{{\rm ls},p}$, together with
$\bSigma_{h,N}\times X_{h,D}\subset H_N(\divvr;\O)\times H_D^1(\O)$,
give \eqref{primal_LS_quasioptimal}. Estimate \eqref{eta_p_discrete} follows from Theorem~\ref{thm:primal_LS_estimator} with $(\btau,w)=(\bsigma_h,u_h)$.
\end{proof}
\subsection{The adjoint least-squares finite element method}
\label{subsec:dual-LSFEM}

The adjoint least-squares finite element approximation is the pair $(\br_h,z_h)\in\bSigma_d^h\times X_d^h$ satisfying
\begin{equation}\label{discrete_dual_LS_min}
\mathrm{LS}_d(\br_h,z_h;g_1,\bg_2)
=
\inf_{(\brho_h,v_h)\in\bSigma_d^h\times X_d^h}
\mathrm{LS}_d(\brho_h,v_h;g_1,\bg_2).
\end{equation}

For $(\brho,v),(\btau,w)\in H(\divvr;\O)\times H^1(\O)$, define
\begin{equation}\label{blsd}
\begin{split}
b_{{\rm ls},d}\bigl((\brho,v),(\btau,w)\bigr)
:={}&
(\brho+A\nabla v+\bb v,
 A^{-1}\btau+\nabla w+A^{-1}\bb w)
 +
(\gradt\brho+cv,\gradt\btau+cw),
\end{split}
\end{equation}
and
\begin{equation}\label{Flsd}
F_{{\rm ls},d}(\btau,w)
:=
-(\bg_2,A^{-1}\btau+\nabla w+A^{-1}\bb w)
+(g_1,\gradt\btau+cw).
\end{equation}
Then \eqref{discrete_dual_LS_min} is equivalent to find $(\br_h,z_h)\in\bSigma_d^h\times X_d^h$, such that
\begin{equation}\label{discrete_dual_LS_weak}
b_{{\rm ls},d}\bigl((\br_h,z_h),(\btau_h,w_h)\bigr)
=
F_{{\rm ls},d}(\btau_h,w_h)
\qquad
\forall(\btau_h,w_h)\in\bSigma_{h,N}\times X_{h,D}.
\end{equation}

By the adjoint least-squares norm equivalence,
\begin{equation}\label{coercivity_dual_LS}
b_{{\rm ls},d}\bigl((\brho_0,v_0),(\brho_0,v_0)\bigr)
=
\mathrm{LS}_d(\brho_0,v_0;0,0)
\ge
c_{\rm LS}\vert\!\vert\!\vert(\brho_0,v_0)\vert\!\vert\!\vert^2
\end{equation}
for all $(\brho_0,v_0)\in H_N(\divvr;\O)\times H_D^1(\O)$. Hence the discrete adjoint problem is uniquely solvable on every conforming mesh, without a mesh-size restriction.

\begin{thm}\label{thm:dual_LS_quasioptimal}
Let $(\br,z)\in\bSigma_d\times X_d$ be the exact adjoint first-order solution, and let $(\br_h,z_h)\in\bSigma_d^h\times X_d^h$ solve \eqref{discrete_dual_LS_min}. Then
\begin{equation}\label{dual_LS_quasioptimal}
\vert\!\vert\!\vert(\br-\br_h,z-z_h)\vert\!\vert\!\vert
\le
C\inf_{(\brho_h,v_h)\in\bSigma_d^h\times X_d^h}
\vert\!\vert\!\vert(\br-\brho_h,z-v_h)\vert\!\vert\!\vert.
\end{equation}
Moreover,
\begin{equation}\label{eta_d_discrete}
c\vert\!\vert\!\vert(\br-\br_h,z-z_h)\vert\!\vert\!\vert
\le
\eta_d^{\mathrm{LS}}(\br_h,z_h)
\le
C\vert\!\vert\!\vert(\br-\br_h,z-z_h)\vert\!\vert\!\vert.
\end{equation}
\end{thm}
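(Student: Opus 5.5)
This is the adjoint analogue of Theorem~\ref{thm:primal_LS_quasioptimal}, and I will follow the same Céa-type argument with $b_{{\rm ls},d}$ in place of $b_{{\rm ls},p}$.

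\textbf{Galerkin orthogonality.} First I show that the exact pair $(\br,z)$ satisfies the continuous analogue of \eqref{discrete_dual_LS_weak}. Since $(\br,z)$ solves \eqref{fos_dual}, the residuals of both first-order equations vanish, so
\[
b_{{\rm ls},d}\bigl((\br,z),(\btau,w)\bigr)=F_{{\rm ls},d}(\btau,w)
\qquad\forall(\btau,w)\in H(\divvr;\O)\times H^1(\O).
\]
To check this, write $\br+A\nabla z+\bb z=-\bg_2$ and $\gradt\br+cz=g_1$, then substitute into \eqref{blsd} and compare with \eqref{Flsd}. Subtracting \eqref{discrete_dual_LS_weak} gives
\[
b_{{\rm ls},d}\bigl((\br-\br_h,z-z_h),(\btau_h,w_h)\bigr)=0
\qquad\forall(\btau_h,w_h)\in\bSigma_{h,N}\times X_{h,D}.
\]

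\textbf{Céa argument.} Fix any $(\brho_h,v_h)\in\bSigma_d^h\times X_d^h$. Then
\[
(\br_h-\brho_h,\;z_h-v_h)\in\bSigma_{h,N}\times X_{h,D}\subset H_N(\divvr;\O)\times H_D^1(\O),
\]
and likewise $(\br-\br_h,\,z-z_h)$ and $(\br-\brho_h,\,z-v_h)$ lie in $H_N(\divvr;\O)\times H_D^1(\O)$, because the affine data match. Continuity of $b_{{\rm ls},d}$ in the norm $\vert\!\vert\!\vert\cdot\vert\!\vert\!\vert$ follows from Cauchy--Schwarz together with $A^{-1},\bb,c\in L^\infty$ and \eqref{A_uniform_ellipticity}. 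Coercivity is \eqref{coercivity_dual_LS}. Using orthogonality,
\[
c_{\rm LS}\vert\!\vert\!\vert(\br-\br_h,z-z_h)\vert\!\vert\!\vert^2
\le b_{{\rm ls},d}\bigl((\br-\br_h,z-z_h),(\br-\brho_h,z-v_h)\bigr)
\le C\,\vert\!\vert\!\vert(\br-\br_h,z-z_h)\vert\!\vert\!\vert\,\vert\!\vert\!\vert(\br-\brho_h,z-v_h)\vert\!\vert\!\vert .
\]
Dividing by $\vert\!\vert\!\vert(\br-\br_h,z-z_h)\vert\!\vert\!\vert$ and taking the infimum over $(\brho_h,v_h)$ gives \eqref{dual_LS_quasioptimal}.

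\textbf{Estimator equivalence.} \eqref{eta_d_discrete} is Theorem~\ref{thm:dual_LS_estimator} applied with $(\brho,v)=(\br_h,z_h)$, which is admissible since $\bSigma_d^h\times X_d^h\subset\bSigma_d\times X_d$.

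The only step needing care is that the boundary conditions place all the differences in the homogeneous spaces, so that coercivity can be applied. This is guaranteed by the assumption \eqref{discrete_boundary_data_assumption}, under which the discrete affine spaces lie inside the continuous ones.
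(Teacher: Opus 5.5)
Your proposal is correct and follows the paper's argument: the paper's proof only cites continuity of $b_{{\rm ls},d}$, the coercivity \eqref{coercivity_dual_LS} on $H_N(\divvr;\O)\times H_D^1(\O)$, and Theorem~\ref{thm:dual_LS_estimator}, and you have spelled out the C\'ea step it leaves implicit (consistency of the exact pair, least-squares orthogonality, then the coercivity--continuity chain). One small correction: the inclusions $\bSigma_d^h\times X_d^h\subset\bSigma_d\times X_d$ hold by definition, and assumption \eqref{discrete_boundary_data_assumption} is only what guarantees that the discrete affine spaces are nonempty.
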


\begin{proof}
The proof is identical to that of Theorem~\ref{thm:primal_LS_quasioptimal}, using the continuity of $b_{{\rm ls},d}$, the coercivity estimate \eqref{coercivity_dual_LS}, and Theorem~\ref{thm:dual_LS_estimator}.
\end{proof}
\subsection{Discrete corrected QoI values}
\label{subsec:discrete-corrected-QoI}

Define
\begin{equation}\label{gamma0_h}
\gamma_{0,h}:=\gamma_0(u_h,z_h),
\qquad
\gamma_{1,h}:=\gamma_1(u_h,\bsigma_h,z_h,\br_h).
\end{equation}

\begin{cor}\label{cor:discrete_QoI_bounds}
The corrected finite element values satisfy
\begin{equation}\label{discrete_QoI_bounds}
|\gamma-\gamma_{0,h}|
\le
C\,\eta_p^{\mathrm{LS}}(\bsigma_h,u_h)
\eta_d^{\mathrm{LS}}(\br_h,z_h),
\end{equation}
and
\begin{equation}\label{discrete_QoI_bounds_gamma1}
|\gamma-\gamma_{1,h}|
\le
C\,\eta_p^{\mathrm{LS}}(\bsigma_h,u_h)
\eta_d^{\mathrm{LS}}(\br_h,z_h).
\end{equation}
\end{cor}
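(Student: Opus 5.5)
The plan is to obtain both bounds by specializing Theorem~\ref{thm:LS_QoI_estimates} to the least-squares finite element pairs; no new analysis is required beyond checking that the discrete solutions lie in the admissible affine spaces. First, I will verify conformity. By construction $(\bsigma_h,u_h)\in\bSigma_p^h\times X_p^h$ and $(\br_h,z_h)\in\bSigma_d^h\times X_d^h$. The inclusions
\[
X_p^h\subset X_p,\quad \bSigma_p^h\subset\bSigma_p,\quad X_d^h\subset X_d,\quad \bSigma_d^h\subset\bSigma_d
\]
hold because the boundary data are exactly representable on the initial mesh, assumption \eqref{discrete_boundary_data_assumption}, and all subsequent meshes are conforming refinements of it. Hence $(\bsigma_h,u_h)\in\bSigma_p\times X_p$ and $(\br_h,z_h)\in\bSigma_d\times X_d$, and both pairs are admissible in Theorem~\ref{thm:LS_QoI_estimates}.

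Second, for \eqref{discrete_QoI_bounds} I will take $(\btau,w)=(\bsigma_h,u_h)$ and $(\brho,v)=(\br_h,z_h)$ in \eqref{LS_gamma0_estimate}. Since $\gamma_{0,h}=\gamma_0(u_h,z_h)$ by \eqref{gamma0_h}, this gives the claim directly. Only the potentials enter $\gamma_0$; the fluxes enter only the estimators. Equivalently, one can combine \eqref{first_error_bound} with the lower bounds in \eqref{eta_p_discrete} and \eqref{eta_d_discrete}, using $\|u-u_h\|_1\le\vert\!\vert\!\vert(\bsigma-\bsigma_h,u-u_h)\vert\!\vert\!\vert$ and the analogous inequality for the adjoint.

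Third, for \eqref{discrete_QoI_bounds_gamma1} I will use the same substitution in \eqref{LS_gamma1_estimate}. Here I need $\bsigma_h,\br_h\in L^2(\O)^d$, which holds trivially, so Theorem~\ref{thm:gamma1_error} applies. Its product bound \eqref{second_error_bound} is then controlled by the $H(\divvr)\times H^1$ norms, which in turn are bounded by $c^{-1}\eta_p^{\mathrm{LS}}$ and $c^{-1}\eta_d^{\mathrm{LS}}$ via Theorems~\ref{thm:primal_LS_quasioptimal} and \ref{thm:dual_LS_quasioptimal}.

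The only point that needs care is that the constant $C$ is independent of $h$. It depends only on $C_{\rm con}$ (or on $a_0^{-1}$, $\|\bb\|_{0,\infty}$, $\|c\|_{0,\infty}$) and on the norm-equivalence constant $c_{\rm LS}$ of Theorem~\ref{thm:LS_norm_equivalence}. All of these are continuous-level constants and are unaffected by mesh refinement.

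Galerkin orthogonality of the least-squares method is not used anywhere. The minimization property \eqref{discrete_primal_LS_min} matters only in that it makes the estimators small, through \eqref{primal_LS_quasioptimal}; it plays no role in the validity of the bounds.
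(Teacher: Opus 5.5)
Your proposal is correct and follows the paper's argument: the corollary is obtained by substituting $(\btau,w)=(\bsigma_h,u_h)$ and $(\brho,v)=(\br_h,z_h)$ into Theorem~\ref{thm:LS_QoI_estimates}, with $h$-independent constants, exactly as you do. One small precision: the inclusions $X_p^h\subset X_p$, $\bSigma_p^h\subset\bSigma_p$, etc.\ follow directly from the definitions, whereas the representability assumption \eqref{discrete_boundary_data_assumption} is what guarantees that these discrete affine spaces are nonempty, so that the discrete solutions exist.
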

This follows immediately from Theorem~\ref{thm:LS_QoI_estimates} with
$(\btau,w)=(\bsigma_h,u_h)$ and $(\brho,v)=(\br_h,z_h)$.

Combining Corollary~\ref{cor:discrete_QoI_bounds} with Theorems~\ref{thm:primal_LS_quasioptimal} and \ref{thm:dual_LS_quasioptimal} gives, for $i=0,1$,
\begin{equation}\label{discrete_QoI_best_approximation}
\begin{split}
|\gamma-\gamma_{i,h}|
\le C&
\inf_{(\btau_h,w_h)\in\bSigma_p^h\times X_p^h}
\vert\!\vert\!\vert(\bsigma-\btau_h,u-w_h)\vert\!\vert\!\vert
\inf_{(\brho_h,v_h)\in\bSigma_d^h\times X_d^h}
\vert\!\vert\!\vert(\br-\brho_h,z-v_h)\vert\!\vert\!\vert.
\end{split}
\end{equation}

On a quasi-uniform family of meshes, suppose that
\begin{equation}\label{discrete_primal_adjoint_rates}
\inf_{(\btau_h,w_h)\in\bSigma_p^h\times X_p^h}
\vert\!\vert\!\vert(\bsigma-\btau_h,u-w_h)\vert\!\vert\!\vert
\lesssim h^{s_p},
\end{equation}
and
\begin{equation}\label{discrete_dual_rate}
\inf_{(\brho_h,v_h)\in\bSigma_d^h\times X_d^h}
\vert\!\vert\!\vert(\br-\brho_h,z-v_h)\vert\!\vert\!\vert
\lesssim h^{s_d}.
\end{equation}
Then
\begin{equation}\label{discrete_QoI_product_rate}
|\gamma-\gamma_{i,h}|
\lesssim h^{s_p+s_d},
\qquad i=0,1.
\end{equation}
In particular, under the regularity assumptions required for first-order approximation in the full flux--potential norm, the lowest-order Raviart--Thomas space and the continuous piecewise linear space give $s_p=s_d=1$, and hence
\[
|\gamma-\gamma_{i,h}|\lesssim h^2,
\qquad i=0,1.
\]

The estimates \eqref{discrete_primal_adjoint_rates}--\eqref{discrete_QoI_product_rate} apply to quasi-uniform meshes. On adaptively refined meshes, convergence is more naturally measured in terms of the number of degrees of freedom and the approximation classes of the primal and adjoint solutions.
\subsection{Elementwise least-squares contributions}
\label{subsec:elementwise-LS-contributions}

The primal and adjoint least-squares functionals admit exact elementwise decompositions. For each $K\in\mathcal T_h$, define
\begin{equation}\label{eta_p_K}
\begin{split}
(\eta_{p,K}^{\mathrm{LS}})^2
:={}&
\|A^{-1/2}\bsigma_h+A^{1/2}\nabla u_h+A^{-1/2}\bff_2\|_{0,K}^2+
\|\gradt\bsigma_h+\bb\cdot\nabla u_h+cu_h-f_1\|_{0,K}^2,
\end{split}
\end{equation}
and
\begin{equation}\label{eta_d_K}
\begin{split}
(\eta_{d,K}^{\mathrm{LS}})^2
:={}&
\|A^{-1/2}\br_h+A^{1/2}\nabla z_h
+A^{-1/2}\bb z_h+A^{-1/2}\bg_2\|_{0,K}^2+
\|\gradt\br_h+cz_h-g_1\|_{0,K}^2.
\end{split}
\end{equation}
Then
\begin{equation}\label{elementwise_LS_decomposition}
\bigl(\eta_p^{\mathrm{LS}}(\bsigma_h,u_h)\bigr)^2
=
\sum_{K\in\mathcal T_h}(\eta_{p,K}^{\mathrm{LS}})^2,
\qquad
\bigl(\eta_d^{\mathrm{LS}}(\br_h,z_h)\bigr)^2
=
\sum_{K\in\mathcal T_h}(\eta_{d,K}^{\mathrm{LS}})^2.
\end{equation}

Thus, \eqref{eta_p_K} and \eqref{eta_d_K} are the exact elementwise contributions to the primal and adjoint least-squares functionals, rather than local estimates involving unknown constants. They provide the local quantities used in the goal-oriented marking strategy of the next section.

\section{A Goal-Oriented Adaptive Algorithm}
\label{sec:adaptive-algorithm}

We turn the a~posteriori bounds of Section~\ref{sec:LSFEM} into a
goal-oriented marking strategy built directly from the physical primal and
adjoint first-order systems. The indicator below is a weighted marking of
the type introduced by Becker, Estecahandy, and
Trujillo~\cite{BET:11}, here formed from the
built-in primal and adjoint least-squares functionals rather than from
classical residual estimators. Write
$\eta_p^{\mathrm{LS}}:=\eta_p^{\mathrm{LS}}(\bsigma_h,u_h)$ and
$\eta_d^{\mathrm{LS}}:=\eta_d^{\mathrm{LS}}(\br_h,z_h)$, and recall that
the elementwise decompositions \eqref{elementwise_LS_decomposition} are
exact:
$(\eta_p^{\mathrm{LS}})^2=\sum_K(\eta_{p,K}^{\mathrm{LS}})^2$ and
$(\eta_d^{\mathrm{LS}})^2=\sum_K(\eta_{d,K}^{\mathrm{LS}})^2$.

The corrected-output estimate is of product form,
$|\gamma-\gamma_{i,h}|\le C\,\eta_p^{\mathrm{LS}}\eta_d^{\mathrm{LS}}$,
$i=0,1$. A na\"ive localization uses the elementwise product
$\eta_{p,K}^{\mathrm{LS}}\eta_{d,K}^{\mathrm{LS}}$, which is unsatisfactory:
it does not decompose the global product additively, since
$\sum_K\eta_{p,K}^{\mathrm{LS}}\eta_{d,K}^{\mathrm{LS}}\le
\eta_p^{\mathrm{LS}}\eta_d^{\mathrm{LS}}$ by Cauchy--Schwarz, and it
vanishes on any element where either factor vanishes, even if the other is
large, so an element important to only one problem may be overlooked.

To obtain an additive indicator we replace the product by a sum. For any
$\delta>0$, Young's inequality gives
\[
\eta_p^{\mathrm{LS}}\eta_d^{\mathrm{LS}}
\le\frac{\delta}{2}(\eta_p^{\mathrm{LS}})^2+\frac{1}{2\delta}(\eta_d^{\mathrm{LS}})^2,
\]
with equality precisely when the two terms coincide, i.e.\ when
$\delta=\eta_d^{\mathrm{LS}}/\eta_p^{\mathrm{LS}}$. Assuming
$\eta_p^{\mathrm{LS}},\eta_d^{\mathrm{LS}}>0$ and making this balancing
choice, the right-hand side becomes
$\tfrac12(\eta_d^{\mathrm{LS}}/\eta_p^{\mathrm{LS}})(\eta_p^{\mathrm{LS}})^2
+\tfrac12(\eta_p^{\mathrm{LS}}/\eta_d^{\mathrm{LS}})(\eta_d^{\mathrm{LS}})^2$;
substituting the exact decompositions and collecting elementwise motivates,
for each $K\in\mathcal T_h$, the \emph{balanced indicator}
\begin{equation}\label{adaptive_balanced_indicator}
\eta_K^2:=\frac12\left(\frac{\eta_d^{\mathrm{LS}}}{\eta_p^{\mathrm{LS}}}(\eta_{p,K}^{\mathrm{LS}})^2
+\frac{\eta_p^{\mathrm{LS}}}{\eta_d^{\mathrm{LS}}}(\eta_{d,K}^{\mathrm{LS}})^2\right) ,
\end{equation}
in which the local primal contribution is weighted by the global adjoint
estimator and vice versa. This is the least-squares analogue of the
weighted estimator of~\cite{BET:11}; in contrast to
the bounded weights $\zeta^2/(\eta^2+\zeta^2)$ used there, the balancing
weights $\tfrac12\,\eta_d^{\mathrm{LS}}/\eta_p^{\mathrm{LS}}$ and
$\tfrac12\,\eta_p^{\mathrm{LS}}/\eta_d^{\mathrm{LS}}$ make the element sum
equal the global product exactly, as we now record.

\begin{lem}\label{lem:adaptive_balanced_sum}
The indicators \eqref{adaptive_balanced_indicator} satisfy
$\sum_{K\in\mathcal T_h}\eta_K^2=\eta_p^{\mathrm{LS}}\eta_d^{\mathrm{LS}}$,
and hence, for $i=0,1$,
$|\gamma-\gamma_{i,h}|\le C\sum_{K\in\mathcal T_h}\eta_K^2$.
\end{lem}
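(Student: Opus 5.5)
The plan is a direct summation, using the exact elementwise decompositions \eqref{elementwise_LS_decomposition}. Throughout I assume $\eta_p^{\mathrm{LS}},\eta_d^{\mathrm{LS}}>0$, as is implicit in the definition \eqref{adaptive_balanced_indicator}; if either estimator vanishes, the bound in Corollary~\ref{cor:discrete_QoI_bounds} already gives $\gamma=\gamma_{i,h}$, so that case is trivial.

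First I sum \eqref{adaptive_balanced_indicator} over $K\in\mathcal T_h$. The global weights $\eta_d^{\mathrm{LS}}/\eta_p^{\mathrm{LS}}$ and $\eta_p^{\mathrm{LS}}/\eta_d^{\mathrm{LS}}$ do not depend on $K$, so they come out of the sums:
\[
\sum_{K\in\mathcal T_h}\eta_K^2
=\frac12\,\frac{\eta_d^{\mathrm{LS}}}{\eta_p^{\mathrm{LS}}}\sum_{K\in\mathcal T_h}(\eta_{p,K}^{\mathrm{LS}})^2
+\frac12\,\frac{\eta_p^{\mathrm{LS}}}{\eta_d^{\mathrm{LS}}}\sum_{K\in\mathcal T_h}(\eta_{d,K}^{\mathrm{LS}})^2 .
\]

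Next I apply the exact decompositions \eqref{elementwise_LS_decomposition} to replace the two sums by $(\eta_p^{\mathrm{LS}})^2$ and $(\eta_d^{\mathrm{LS}})^2$. Each term then reduces to $\tfrac12\eta_p^{\mathrm{LS}}\eta_d^{\mathrm{LS}}$, and adding them gives $\sum_K\eta_K^2=\eta_p^{\mathrm{LS}}\eta_d^{\mathrm{LS}}$.

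Finally, I insert this identity into Corollary~\ref{cor:discrete_QoI_bounds}, that is, into \eqref{discrete_QoI_bounds} and \eqref{discrete_QoI_bounds_gamma1}. This yields $|\gamma-\gamma_{i,h}|\le C\sum_K\eta_K^2$ for $i=0,1$, with the same constant $C$.

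There is no real obstacle. The only points that need care are that the weights are global and therefore factor out of the sums, and the degenerate case where one of the estimators vanishes, which is handled by the remark at the start.
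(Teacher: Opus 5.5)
Your proposal is correct and follows the paper's own argument. You factor the global weights out of the sums, apply the exact decompositions \eqref{elementwise_LS_decomposition} to obtain $\eta_p^{\mathrm{LS}}\eta_d^{\mathrm{LS}}$ (which the paper identifies as the equality case of Young's inequality), and then invoke the product bound of Corollary~\ref{cor:discrete_QoI_bounds}, with the positivity of both estimators assumed exactly as in the paper's definition of the indicator.
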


\begin{proof}
Summing \eqref{adaptive_balanced_indicator} and using the exact
decompositions,
$\sum_K\eta_K^2
=\frac12\left(\frac{\eta_d^{\mathrm{LS}}}{\eta_p^{\mathrm{LS}}}(\eta_p^{\mathrm{LS}})^2
+\frac{\eta_p^{\mathrm{LS}}}{\eta_d^{\mathrm{LS}}}(\eta_d^{\mathrm{LS}})^2\right)
=\eta_p^{\mathrm{LS}}\eta_d^{\mathrm{LS}}$, the equality case of
Young's inequality at
$\delta=\eta_d^{\mathrm{LS}}/\eta_p^{\mathrm{LS}}$; the bound then follows
from the product estimate.
\end{proof}

The identity $\sum_K\eta_K^2=\eta_p^{\mathrm{LS}}\eta_d^{\mathrm{LS}}$ is
exact for the \emph{computed} least-squares functionals, so the relative
weighting in \eqref{adaptive_balanced_indicator} contains no unknown
estimator-equivalence constants; the stability constant $C$ enters only
the global reliability bound, not the marking. The balanced indicator also
removes the degeneracy of the pure product: if
$\eta_{d,K}^{\mathrm{LS}}=0$ but $\eta_{p,K}^{\mathrm{LS}}>0$ then
$\eta_K^2=\tfrac12(\eta_d^{\mathrm{LS}}/\eta_p^{\mathrm{LS}})(\eta_{p,K}^{\mathrm{LS}})^2>0$,
and symmetrically. We stress that $\eta_K^2$ is a marking weight tied to
the global product estimator, not an exact local contribution to the
signed error $\gamma-\gamma_{i,h}$.

Given $0<\theta<1$, we mark by the D\"orfler criterion, choosing
$\mathcal M_h\subset\mathcal T_h$ of (near-)minimal cardinality with
$\sum_{K\in\mathcal M_h}\eta_K^2\ge\theta\sum_{K\in\mathcal T_h}\eta_K^2$,
and refine the marked elements together with any needed for conformity;
the primal and adjoint problems are solved on the resulting common mesh,
which lets the local contributions be combined directly without
transferring indicators between triangulations. Because the weight of each
element combines the local primal and adjoint contributions, an element
that is important to either problem alone can still be selected, which the
elementwise product would not achieve. If either global estimator
vanishes, the corresponding flux--potential approximation is exact by the
norm equivalence
\eqref{ls_equivalence_primal}--\eqref{ls_equivalence_dual} and
$\gamma=\gamma_{i,h}$, so no further refinement is needed.

We do not claim a quasi-optimality result of the type
in~\cite{BET:11,MS:09}, whose
analyses proceed through Galerkin orthogonality in the energy norm and are
not available in the present least-squares setting. Here the balanced
indicator is used as a practical goal-oriented marking strategy, and the
experiments of Section~\ref{sec:numerical} illustrate the resulting
product-rate behavior of the corrected quantities of interest.

\section{Numerical Tests}
\label{sec:numerical}

We now illustrate the theory with four numerical experiments. In all
cases the primal and physical-adjoint first-order systems are discretized
by the least-squares finite element method of Section~\ref{sec:LSFEM},
using the lowest-order Raviart--Thomas space for the fluxes and continuous
piecewise linear elements for the potentials on the same mesh; the primal
and adjoint least-squares functionals serve as the built-in error
estimators $\eta_p^{\mathrm{LS}}$ and $\eta_d^{\mathrm{LS}}$, and the
corrected quantities $\gamma_{0,h}$ and $\gamma_{1,h}$ are evaluated from
the resulting discrete solutions. The experiments are designed to probe
the framework from complementary angles: Section~\ref{subsec:smooth-interface-example}
verifies, on a smooth problem with a discontinuous diffusion coefficient
across fitted interfaces, the primal--adjoint functional identity and the
predicted product-order convergence under uniform refinement;
Section~\ref{subsec:mommer-stevenson} uses a benchmark with spatially
separated primal and adjoint singularities, the configuration in which
purely multiplicative localization is most exposed, to test the balanced
marking strategy;
Section~\ref{subsec:convection-diffusion} treats a convection--diffusion
problem whose primal and adjoint boundary layers lie on opposite sides of
the domain; and Section~\ref{subsec:lshape} considers an L-shaped domain
with a reentrant-corner singularity probed by four different quantities of
interest. Throughout, $N$ denotes the number of mesh nodes, and all rates
are reported with respect to $N$. Non-polynomial Dirichlet data are imposed by
nodal interpolation in the potential spaces; the effect of boundary-data
approximation is not analyzed here. The reference values of the
quantities of interest used below are derived in
Appendix~\ref{app:reference-values}.

\subsection{A smooth interface problem with a convergence check}
\label{subsec:smooth-interface-example}
Let \(\Omega:=(-1,1)^2\), with subdomains
\(\Omega_1:=\{x+y>1\}\), \(\Omega_2:=\{x+y<-1\}\), and
\(\Omega_0:=\Omega\setminus\overline{\Omega_1\cup\Omega_2}\).
The diffusion tensor is \(A=\alpha I\) with \(\alpha=1\) in \(\Omega_0\) and \(\alpha=5\) in \(\Omega_1\cup\Omega_2\); we take \(\bb=(1,2)^T\), \(c=1\), \(\Gamma_N=\{y=1\}\cup\{x=-1\}\), and \(\Gamma_D=\{y=-1\}\cup\{x=1\}\). The initial triangulation resolves both interfaces \(x+y=\pm1\), and all subsequent meshes are obtained by conforming refinement.

\subsubsection{Manufactured primal and adjoint solutions}
For the primal problem we choose the smooth solution \(u=\cos(x+\tfrac12)\cos(y-\tfrac12)\) and set \(\bsigma:=-\alpha\nabla u-\bff_2\), with a piecewise divergence-free correction \(\bff_2\) (\(\bff_2=0\) in \(\Omega_0\)) making the normal flux continuous across the interfaces:
\[
\bff_2=
\begin{cases}
\bigl(4\sin(\tfrac32-y)\cos(y-\tfrac12),\ 4\cos(x+\tfrac12)\sin(\tfrac12-x)\bigr)^{T}, &\text{in }\Omega_1,\\[1mm]
\bigl(4\sin(-\tfrac12-y)\cos(y-\tfrac12),\ 4\cos(x+\tfrac12)\sin(-x-\tfrac32)\bigr)^{T}, &\text{in }\Omega_2.
\end{cases}
\]
By construction \(\divvr\bff_2=0\) in each \(\Omega_i\) and \([\bsigma\cdot n]=0\) across \(x+y=\pm1\), so \(\bsigma\in H(\divvr;\Omega)\); the primal data are \(f_1:=\divvr\bsigma+\bb\cdot\nabla u+cu\), \(\phi_p:=u|_{\Gamma_D}\), \(\psi_p:=\bsigma\cdot n|_{\Gamma_N}\).
For the adjoint problem we choose \(z=\cos(x-\tfrac12)\cos(y+\tfrac12)\) and \(\br:=-(\alpha\nabla z+\bb z+\bg_2)\), with the analogous correction (\(\bg_2=0\) in \(\Omega_0\))
\[
\bg_2=
\begin{cases}
\bigl(4\sin(\tfrac12-y)\cos(y+\tfrac12),\ 4\cos(x-\tfrac12)\sin(\tfrac32-x)\bigr)^{T}, &\text{in }\Omega_1,\\[1mm]
\bigl(4\sin(-y-\tfrac32)\cos(y+\tfrac12),\ 4\cos(x-\tfrac12)\sin(-x-\tfrac12)\bigr)^{T}, &\text{in }\Omega_2.
\end{cases}
\]
Since \(\bb\) is constant and \(z\) continuous, \(\bb z\) has no interface jump, and \(\bg_2\) cancels the normal jump of \(\alpha\nabla z\), so \([\br\cdot n]=0\) and \(\br\in H(\divvr;\Omega)\); the adjoint data are \(g_1:=\divvr\br+cz\), \(\phi_d:=z|_{\Gamma_D}\), \(\psi_d:=\br\cdot n|_{\Gamma_N}\).
The primal and dual representations of the output, evaluated at the exact
fields, agree by the continuous primal--adjoint identity and give the
reference value $\gamma_{\rm ref}=5.058351722069$; see
Appendix~\ref{app:reference-values}.

\subsubsection{Uniform-refinement convergence}
Under uniform refinement, with every mesh resolving the interfaces
\(x+y=\pm1\), hence the coefficient jumps, exactly,
Figure~\ref{fig:convergence-history} shows that both flux--potential
errors decay as \(O(N^{-1/2})\) in the
\(H(\divvr;\Omega)\times H^1(\Omega)\) norm. The built-in estimators
\(\eta_p^{\rm LS}\) and \(\eta_d^{\rm LS}\) track the primal and dual
errors almost exactly, so that their
product satisfies \(\eta_p^{\rm LS}\eta_d^{\rm LS}=O(N^{-1})\). Both
corrected outputs \(|\gamma_{\rm ref}-\gamma_{i,h}|\) then decay at this
product rate \(O(N^{-1})\). On these quasi-uniform meshes
\(N\simeq h^{-2}\), so that the observed rates \(O(N^{-1/2})\) and
\(O(N^{-1})\) correspond to \(O(h)\) and \(O(h^2)\), as predicted by
Corollary~\ref{cor:qoi_rate}. The example thus verifies the
discontinuous-coefficient treatment on fitted interfaces, the mixed
boundary conditions, the primal--adjoint identity, and the predicted
first- and second-order rates.

\begin{figure}[htbp]
\centering
\includegraphics[width=0.6\textwidth]{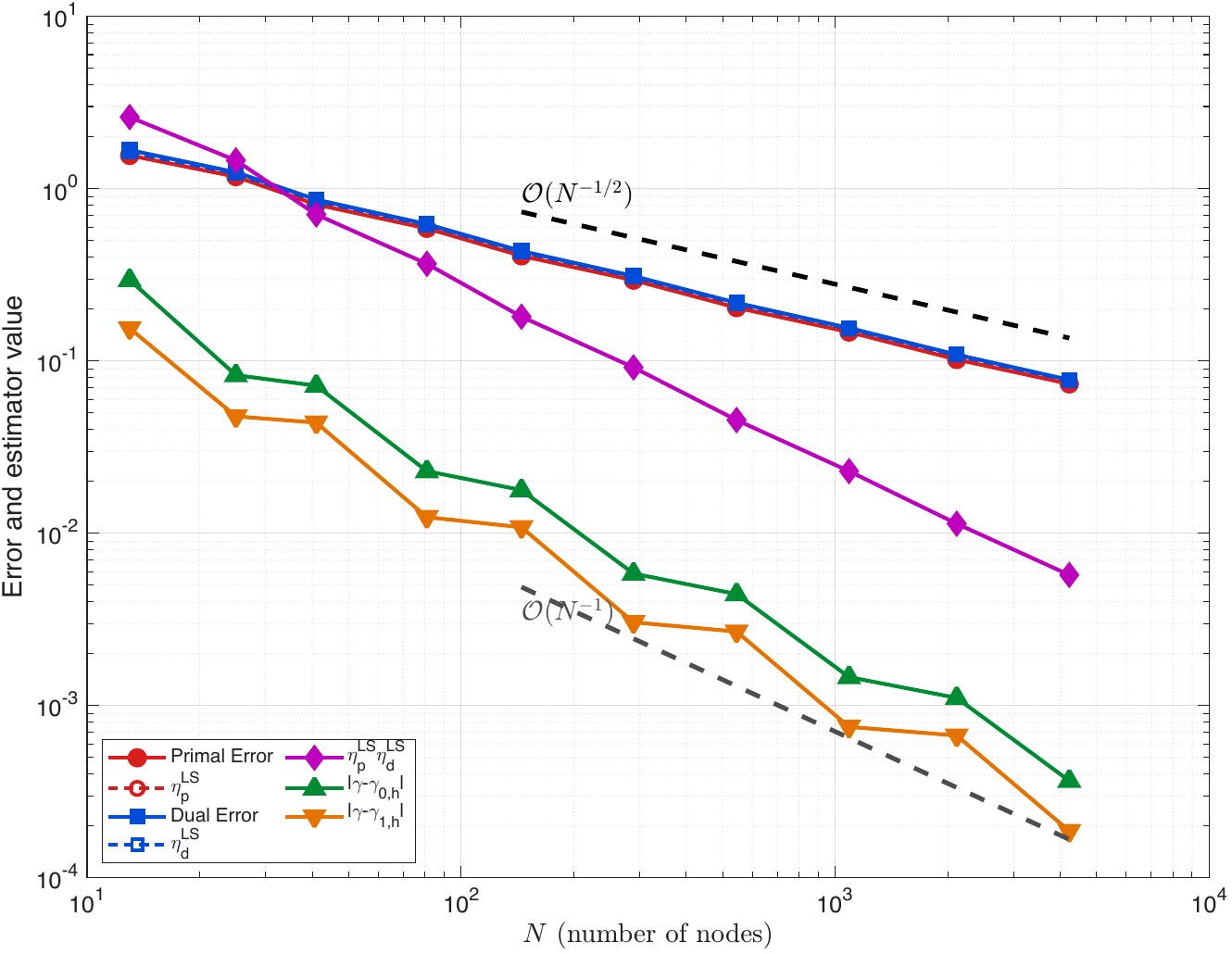}
\caption{Uniform-refinement convergence history for the smooth interface problem. The primal and adjoint least-squares errors and the built-in estimators \(\eta_p^{\rm LS}\), \(\eta_d^{\rm LS}\) decay as
\(O(N^{-1/2})\); the product estimator \(\eta_p^{\rm LS}\eta_d^{\rm LS}\)
and the corrected output errors \(|\gamma_{\rm ref}-\gamma_{0,h}|\),
\(|\gamma_{\rm ref}-\gamma_{1,h}|\) decay as \(O(N^{-1})\).}
\label{fig:convergence-history}
\end{figure}

\subsection{The Mommer--Stevenson benchmark}
\label{subsec:mommer-stevenson}
We next consider the goal-oriented benchmark of Mommer and Stevenson~\cite{MS:09}, in which the dominant local primal and adjoint residual contributions are concentrated in spatially separated regions, a configuration that triggers the ``blind spot'' of purely multiplicative (local product) marking. Let $\Omega:=(-1,1)^2$ and
\[
-\Delta u=\nabla\cdot\bff_2,\qquad -\Delta z=\nabla\cdot\bg_2\quad\text{in }\Omega,\qquad u=z=0\ \text{on }\partial\Omega,
\]
so $f_1=g_1=0$ and $\phi_p=\phi_d=0$. The sources are the piecewise-constant fields $\bff_2=(1,0)^{\mathsf T}$ on the triangle $T_p:=\operatorname{conv}\{(1,0),(1,1),(0,1)\}$ and $\bg_2=(1,0)^{\mathsf T}$ on the opposite-corner triangle $T_d:=\operatorname{conv}\{(-1,-1),(0,-1),(-1,0)\}$ (zero elsewhere), with disjoint supports $T_p\cap T_d=\emptyset$; one uniform refinement resolves both triangles exactly. With $\bsigma:=-\nabla u-\bff_2$ and $\br:=-\nabla z-\bg_2$ the fluxes satisfy $\nabla\cdot\bsigma=\nabla\cdot\br=0$. Since $\bff_2,\bg_2$ are discontinuous, the sources are distributional, supported on $\partial T_p$, $\partial T_d$; the dominant primal and adjoint singular structures are thus generated near $T_p$ and $T_d$ and are spatially separated. The data define
\[
\mathcal J_p(u):=-(\bg_2,\nabla u)_\Omega=-\int_{T_d}\partial_x u\,dx,
\qquad
\mathcal J_d(z):=-(\bff_2,\nabla z)_\Omega=-\int_{T_p}\partial_x z\,dx,
\]
with common value $\gamma:=\mathcal J_p(u)=(\nabla u,\nabla z)_\Omega=\mathcal J_d(z)$; a Dirichlet eigenfunction expansion on $\Omega$ yields the reference value $\gamma_{\rm ref}=-0.006340363264$.

Near $T_p$ the primal least-squares contribution is large and the adjoint one small, and vice versa near $T_d$; hence the elementwise product $\eta_{p,K}^{\rm LS}\eta_{d,K}^{\rm LS}$ can be small on \emph{every} element even though the global $\eta_p^{\rm LS},\eta_d^{\rm LS}$ are both substantial. The blind spot thus concerns the \emph{local} product used for marking, not the global one. We instead use the balanced indicator \eqref{adaptive_balanced_indicator},
$
\eta_K^2
=\frac{1}{2}\left(\frac{\eta_d^{\rm LS}}{\eta_p^{\rm LS}}\bigl(\eta_{p,K}^{\rm LS}\bigr)^2
+\frac{\eta_p^{\rm LS}}{\eta_d^{\rm LS}}\bigl(\eta_{d,K}^{\rm LS}\bigr)^2\right)$,
with the D\"orfler criterion of Section~\ref{sec:adaptive-algorithm} on a common mesh. As Figure~\ref{fig:mommer-stevenson}(a) shows, refinement is driven near the edges and vertices of both $T_p$ and $T_d$, not only where the local residuals overlap. In the convergence history, Figure~\ref{fig:mommer-stevenson}(b), the estimators $\eta_p^{\rm LS}$ and $\eta_d^{\rm LS}$ coincide and decay as $O(N^{-1/2})$, so $\eta_p^{\rm LS}\eta_d^{\rm LS}=O(N^{-1})$, and both corrected outputs $|\gamma_{\rm ref}-\gamma_{i,h}|$ exhibit a decay close to $O(N^{-1})$, with moderate preasymptotic oscillations. Neither corrected approximation stagnates despite the spatial separation of the two singular structures. This is the configuration in which purely multiplicative localization is most exposed: the elementwise product vanishes on any element where one of the two local factors vanishes, and by Cauchy--Schwarz it does not decompose the global product additively, whereas the balanced indicator satisfies the exact identity of Lemma~\ref{lem:adaptive_balanced_sum}.

\begin{figure}[htbp]
\centering
\begin{minipage}[t]{0.44\textwidth}
    \centering
    \includegraphics[width=\textwidth]{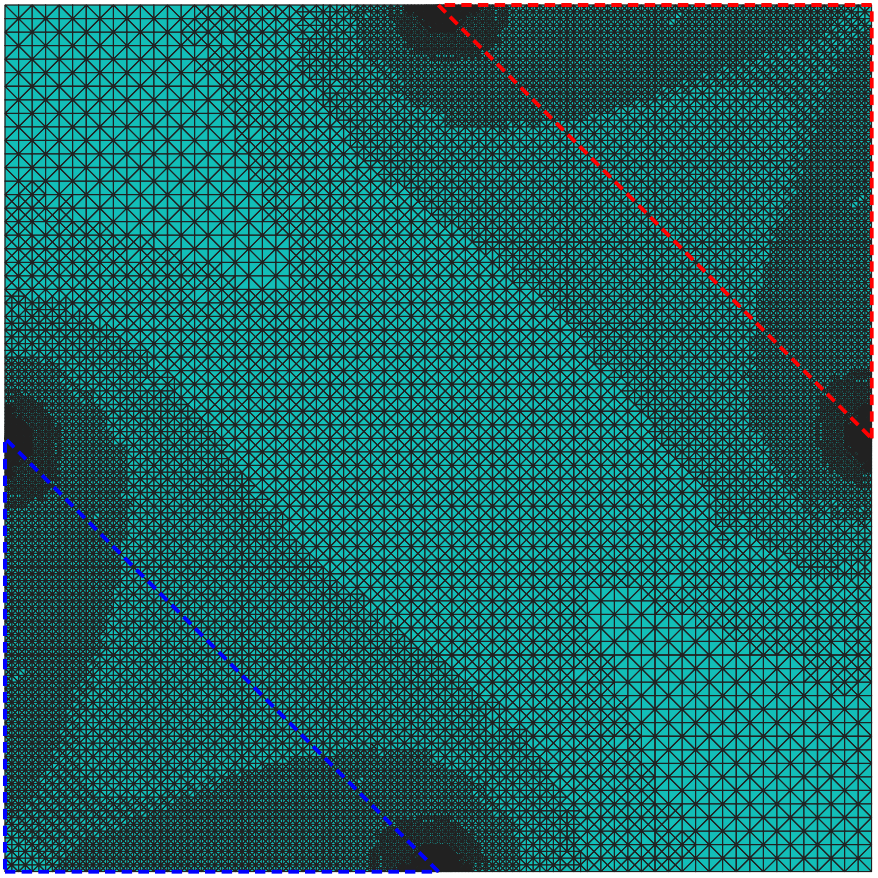}

    \smallskip{\small (a) Final adaptive mesh.}
\end{minipage}\hfill
\begin{minipage}[t]{0.52\textwidth}
    \centering
    \includegraphics[width=\textwidth]{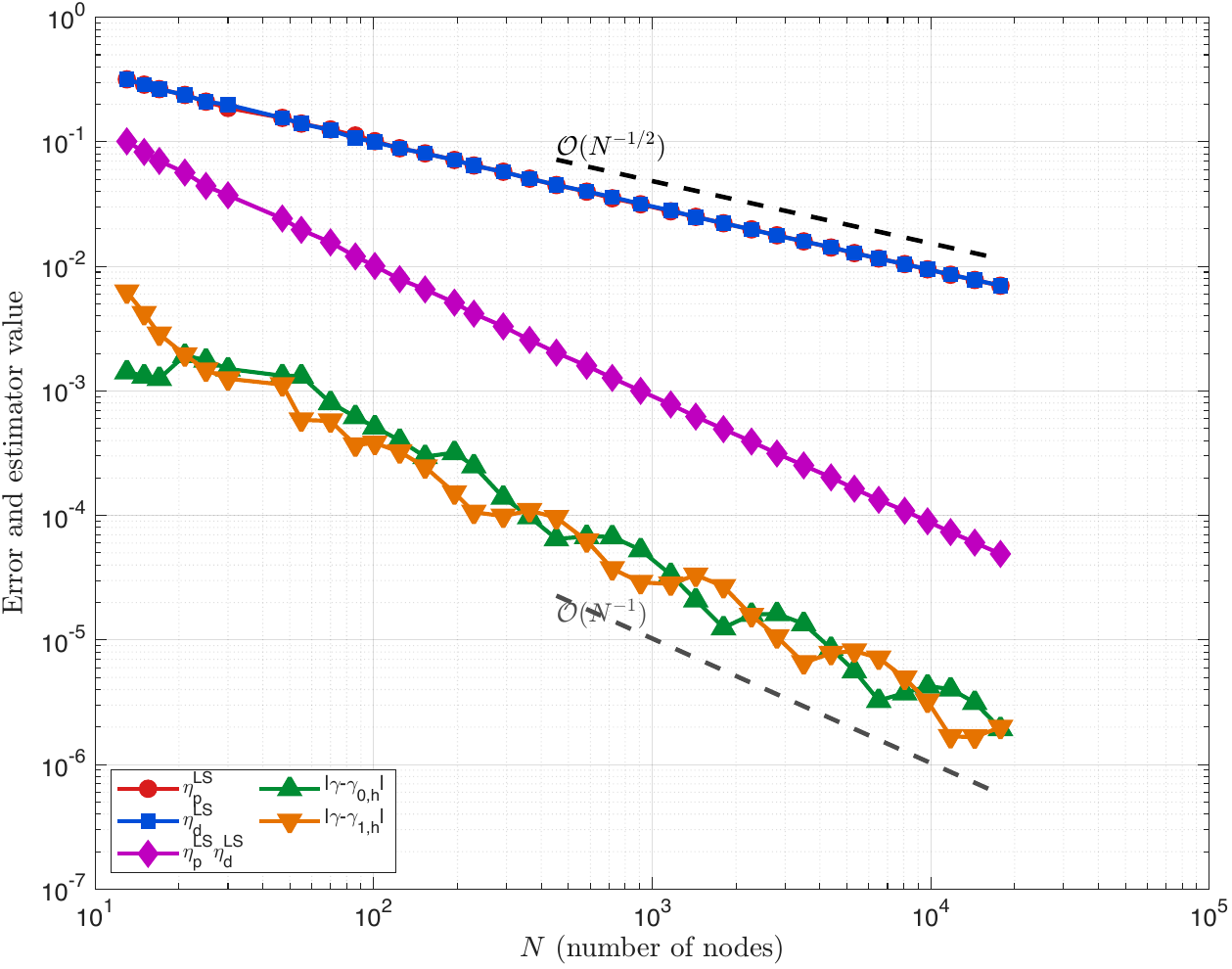}

    \smallskip{\small (b) Adaptive convergence history.}
\end{minipage}
\caption{Mommer--Stevenson benchmark. Left: final adaptive mesh from the balanced primal--adjoint marking strategy, with $T_p$ (red) and $T_d$ (blue) indicated; refinement concentrates near both active triangles. Right: adaptive convergence of the primal and adjoint least-squares estimators $\eta_p^{\rm LS}$, $\eta_d^{\rm LS}$ (which coincide here), their product $\eta_p^{\rm LS}\eta_d^{\rm LS}$, and the corrected output errors $|\gamma_{\rm ref}-\gamma_{0,h}|$, $|\gamma_{\rm ref}-\gamma_{1,h}|$.}
\label{fig:mommer-stevenson}
\end{figure}

\subsection{A convection--diffusion problem with primal and dual boundary layers}
\label{subsec:convection-diffusion}
Let \(\Omega=(0,1)^2\), \(A=0.1\,I\), \(\bb=(2,3)^{\mathsf T}\), \(c=1\), \(\Gamma_D=\partial\Omega\). We take the manufactured primal solution \(u(x,y)=(x-e^{20(x-1)})(y^2-e^{30(y-1)})\), with outflow layers at \(x=1,y=1\), and the reflected adjoint \(z(x,y):=u(1-x,1-y)\), with layers at \(x=0,y=0\). Setting \(\bsigma:=-A\nabla u\), \(\br:=-A\nabla z-\bb z\), \(\bff_2=\bg_2=0\), and taking \(f_1,g_1,\phi_p,\phi_d\) from the exact fields, the first-order systems and the quantity of interest \(\gamma:=\mathcal J_p(u)=\mathcal J_d(z)\) are as in Section~\ref{subsec:adjoint-problem}; direct integration gives \(\gamma=0.1052871614333\).

Figure~\ref{fig:boundary-layer-results} shows the final mesh and convergence history. The balanced strategy refines all four layers, most strongly near the corners where two layers meet. Both flux--potential errors decay as \(O(N^{-1/2})\), and each least-squares estimator tracks its own error, so \(\eta_p^{\rm LS}\eta_d^{\rm LS}=O(N^{-1})\); the corrected outputs \(|\gamma-\gamma_{i,h}|\) exhibit a decay close to \(O(N^{-1})\).

\begin{figure}[htbp]
\centering
\begin{minipage}[t]{0.44\textwidth}
    \centering
    \includegraphics[width=\textwidth]{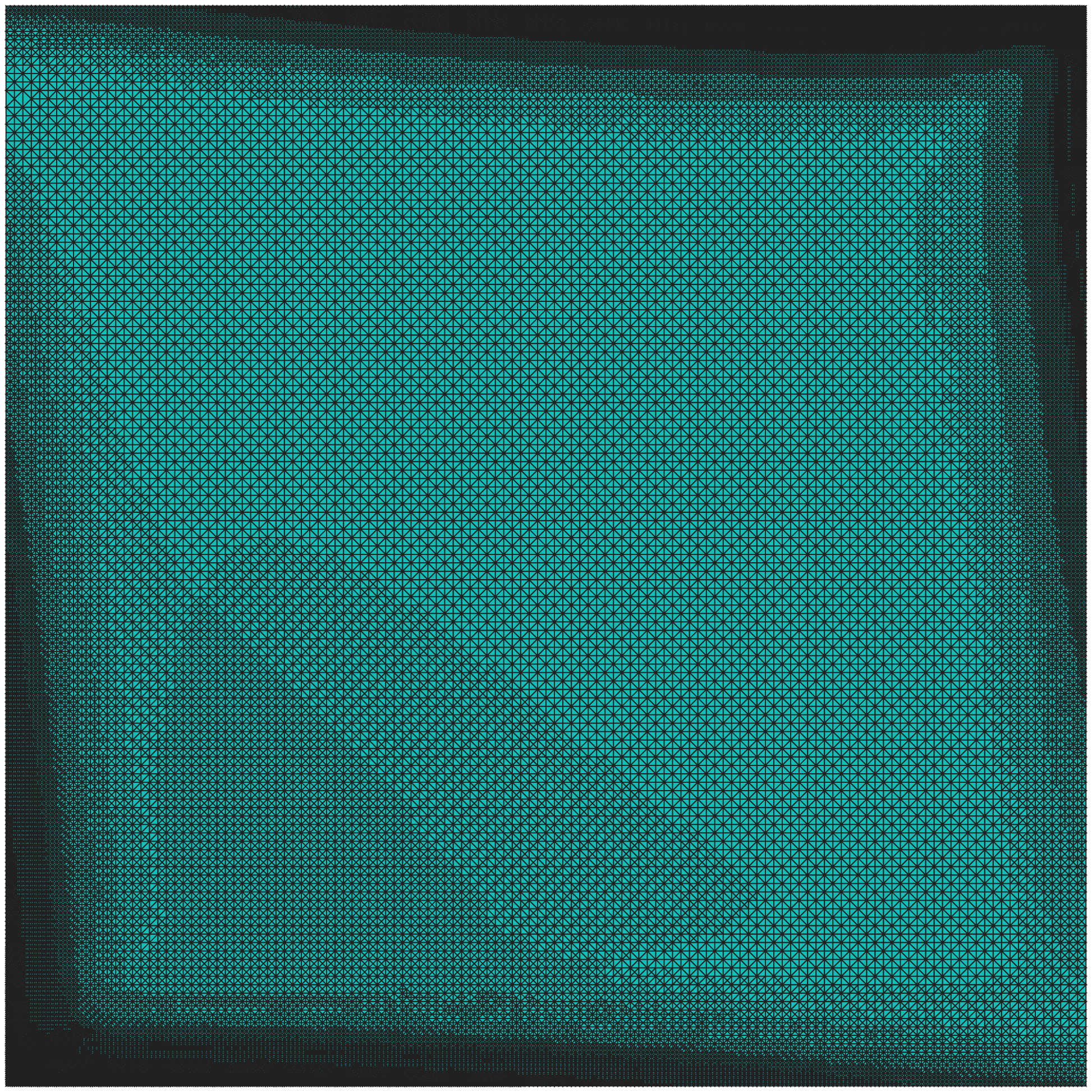}
    \smallskip{\small (a) Final adaptive mesh.}
\end{minipage}\hfill
\begin{minipage}[t]{0.52\textwidth}
    \centering
    \includegraphics[width=\textwidth]{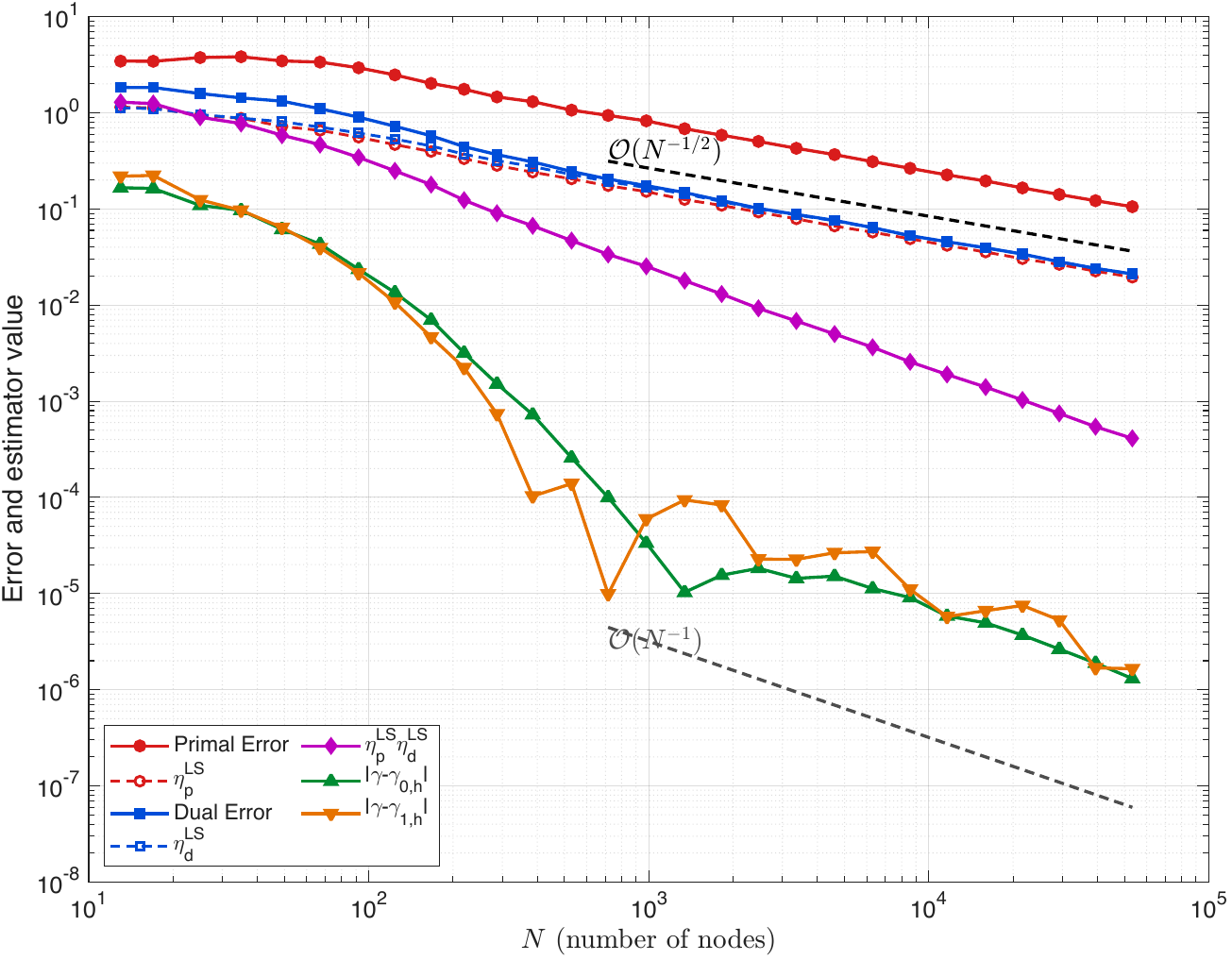}
    \smallskip{\small (b) Adaptive convergence history.}
\end{minipage}
\caption{Convection--diffusion problem with primal and adjoint boundary layers. Left: final adaptive mesh, resolving the primal layers at \(x=1,y=1\) and the adjoint layers at \(x=0,y=0\). Right: primal and adjoint errors, their least-squares estimators, the product estimator, and the corrected output errors against \(N\).}
\label{fig:boundary-layer-results}
\end{figure}

\subsection{An L-shaped problem with four quantities of interest}
\label{subsec:lshape}

Let
\[
\O=(-1,1)^2\setminus\bigl([0,1]\times[-1,0]\bigr),
\qquad
\G_N:=\{(x,1):-1<x<1\},
\qquad
\G_D:=\partial\O\setminus\overline{\G_N},
\]
and $\G_L:=\{(-1,y):-1<y<1\}\subset\G_D$. The origin is a reentrant
corner of opening angle $3\pi/2$; $(r,\theta)$ denote polar coordinates
centered there with $\theta\in(0,3\pi/2)$, so that $\theta=0$ and
$\theta=3\pi/2$ are the two edges meeting at the corner. Set
\[
u_{\rm ex}(r,\theta):=r^{2/3}\sin\Bigl(\frac{2\theta}{3}\Bigr)
\in H^{5/3-\epsilon}(\O)\setminus H^{5/3}(\O)
\qquad\text{for every }\epsilon>0 .
\]
The primal problem is
\begin{equation}\label{lshape_primal}
\begin{aligned}
-\Delta u&=0 &&\text{in }\O,\\
u&=\phi_p:=u_{\rm ex}|_{\G_D} &&\text{on }\G_D,\\
\bsigma\cdot\bn&=\psi_p:=-\nabla u_{\rm ex}\cdot\bn|_{\G_N} &&\text{on }\G_N,
\end{aligned}
\qquad
\bsigma:=-\nabla u,
\end{equation}
so $u=u_{\rm ex}$ and $\gradt\bsigma=0$. Since $u_{\rm ex}$ vanishes at
$\theta=0$ and $\theta=3\pi/2$, the datum $\phi_p$ is identically zero on
the two edges adjacent to the reentrant corner and smooth elsewhere on
$\G_D$. A single primal singularity
is probed below by four physical outputs, each inducing its own adjoint,
so the marking strategy of Section~\ref{sec:adaptive-algorithm} must adapt
to a different adjoint structure in each case.

\subsubsection{Four quantities of interest and their adjoints}
\label{subsec:lshape-qoi}
The four outputs exercise, in turn, the Neumann-trace, Dirichlet-flux,
volume and gradient terms of \eqref{cJp}:
\[
J_1(u)=\int_{\G_N}u\,ds,\qquad
J_2(\bsigma)=\int_{\G_L}(y+1)\,\bsigma\cdot\bn\,ds,\qquad
J_3(u)=\int_{\omega_0}u\,dx,\qquad
J_4(\bsigma)=\int_{T_0}\sigma_1\,dx,
\]
where
\[
\omega_0:=\Bigl(-1,-\tfrac12\Bigr)^2,
\qquad
T_0:=\operatorname{conv}\Bigl\{(-1,-1),(-1,0),\bigl(-\tfrac12,-\tfrac12\bigr)\Bigr\},
\]
whose interiors lie in $\O$, whose closures meet $\G_D$, and which are
aligned with the initial mesh, so that $\chi_{\omega_0}$ and $\bg_2$ are piecewise
constant on every mesh used; $\chi_\omega$ denotes the characteristic
function of $\omega$. The corresponding data in
\eqref{cJp} and adjoint problems \eqref{eq_dual} are as follows; in each
case $\br_i:=-(\nabla z_i+\bg_2)$.

\medskip
\noindent\emph{(i) Neumann trace.} $g_1=0$, $\bg_2=\mathbf 0$,
$\psi_d=-1$, $\phi_d=0$:
\[
-\Delta z_1=0\ \text{in }\O,\qquad
z_1=0\ \text{on }\G_D,\qquad
\br_1\cdot\bn=-1\ \text{on }\G_N .
\]
Besides the reentrant-corner singularity, $z_1$ has weaker mixed-boundary
singularities at the Neumann endpoints $(\pm1,1)$.

\medskip
\noindent\emph{(ii) Weighted Dirichlet flux.} $g_1=0$, $\bg_2=\mathbf 0$,
$\psi_d=0$, $\phi_d=(y+1)\chi_{\G_L}$:
\[
-\Delta z_2=0\ \text{in }\O,\qquad
z_2=(y+1)\chi_{\G_L}\ \text{on }\G_D,\qquad
\br_2\cdot\bn=0\ \text{on }\G_N .
\]
The weight of the flux measurement has become the essential datum of the
adjoint, as predicted by \eqref{dual_essential_data}. In addition to the
corner singularity, $z_2$ has a mixed-boundary singularity at $(-1,1)$,
where the datum meets the homogeneous Neumann condition without
first-order compatibility.

\medskip
\noindent\emph{(iii) Localized volume.} $g_1=\chi_{\omega_0}$,
$\bg_2=\mathbf 0$, $\psi_d=0$, $\phi_d=0$:
\[
-\Delta z_3=\chi_{\omega_0}\ \text{in }\O,\qquad
z_3=0\ \text{on }\G_D,\qquad
\br_3\cdot\bn=0\ \text{on }\G_N .
\]
Here $z_3$ retains the corner singularity and has reduced regularity
across $\partial\omega_0$, where the source jumps.

\medskip
\noindent\emph{(iv) Localized flux.} $g_1=0$,
$\bg_2=(\chi_{T_0},0)^{\mathsf T}$, $\psi_d=0$, $\phi_d=0$, so that
$\cJ_p(u)=-(\bg_2,\nabla u)_\O=(\bg_2,\bsigma)_\O=J_4(\bsigma)$:
\[
\br_4+\nabla z_4=-\bg_2,\quad \gradt\br_4=0\ \ \text{in }\O,
\qquad
z_4=0\ \text{on }\G_D,
\qquad
\br_4\cdot\bn=0\ \text{on }\G_N,
\]
equivalently $-\Delta z_4=\partial_x\chi_{T_0}$ in $\mathcal D'(\O)$.
Thus $z_4$ is harmonic in $T_0$ and in $\O\setminus\overline{T_0}$, while
the jump of $\bg_2$ across $\partial T_0$ acts as a distributional source
on $\partial T_0$, producing singular behavior at the vertices of $T_0$.

\medskip
Each output is evaluated exactly at $u=u_{\rm ex}$ by elementary
integration; the resulting values are collected in
Table~\ref{tab:lshape-qoi}.

\begin{table}[htbp]
\centering
\small
\begin{tabular}{lll}
\toprule
QoI & Type & Exact value $\gamma$\\
\midrule
$J_1$ & Neumann trace           & $\phantom{-}1.78860312648842$\\
$J_2$ & Weighted Dirichlet flux & $-1.03265049655150$\\
$J_3$ & Localized volume        & $\phantom{-}0.130002152935573$\\
$J_4$ & Localized flux          & $\phantom{-}0.157712193795526$\\
\bottomrule
\end{tabular}
\caption{L-shaped example: the four quantities of interest and their
exact values.}
\label{tab:lshape-qoi}
\end{table}

\subsubsection{Adaptive results}
\label{subsec:lshape-results}
For all four outputs the primal error and both estimators decay as
$O(N^{-1/2})$, so $\eta_p^{\mathrm{LS}}\eta_d^{\mathrm{LS}}=O(N^{-1})$,
and both corrected outputs exhibit a decay close to this product rate
(Figure~\ref{fig:lshape-convergence}), consistent with
Theorem~\ref{thm:gamma1_error} and Corollary~\ref{cor:qoi_rate}; the
moderate preasymptotic oscillations come from cancellation among the
signed terms of the corrected functionals. The flux--potential correction
$\gamma_{1,h}$ is markedly more accurate than $\gamma_{0,h}$ for the flux
output $J_4$ and comparable for $J_1$--$J_3$. Driven by the balanced
indicator \eqref{adaptive_balanced_indicator}, refinement resolves the
reentrant-corner singularity common to all four problems together with the
output-dependent adjoint features visible in
Figure~\ref{fig:lshape-duals}: the Neumann endpoints $(\pm1,1)$ for
$J_1$, the mixed-boundary point $(-1,1)$ for $J_2$, the interface
$\partial\omega_0$ for $J_3$, and the vertices of $T_0$ for $J_4$, all of
which a primal-only strategy would miss.

\begin{figure}[htbp]
\centering
\begin{minipage}[t]{0.24\textwidth}\centering
  \includegraphics[width=\textwidth]{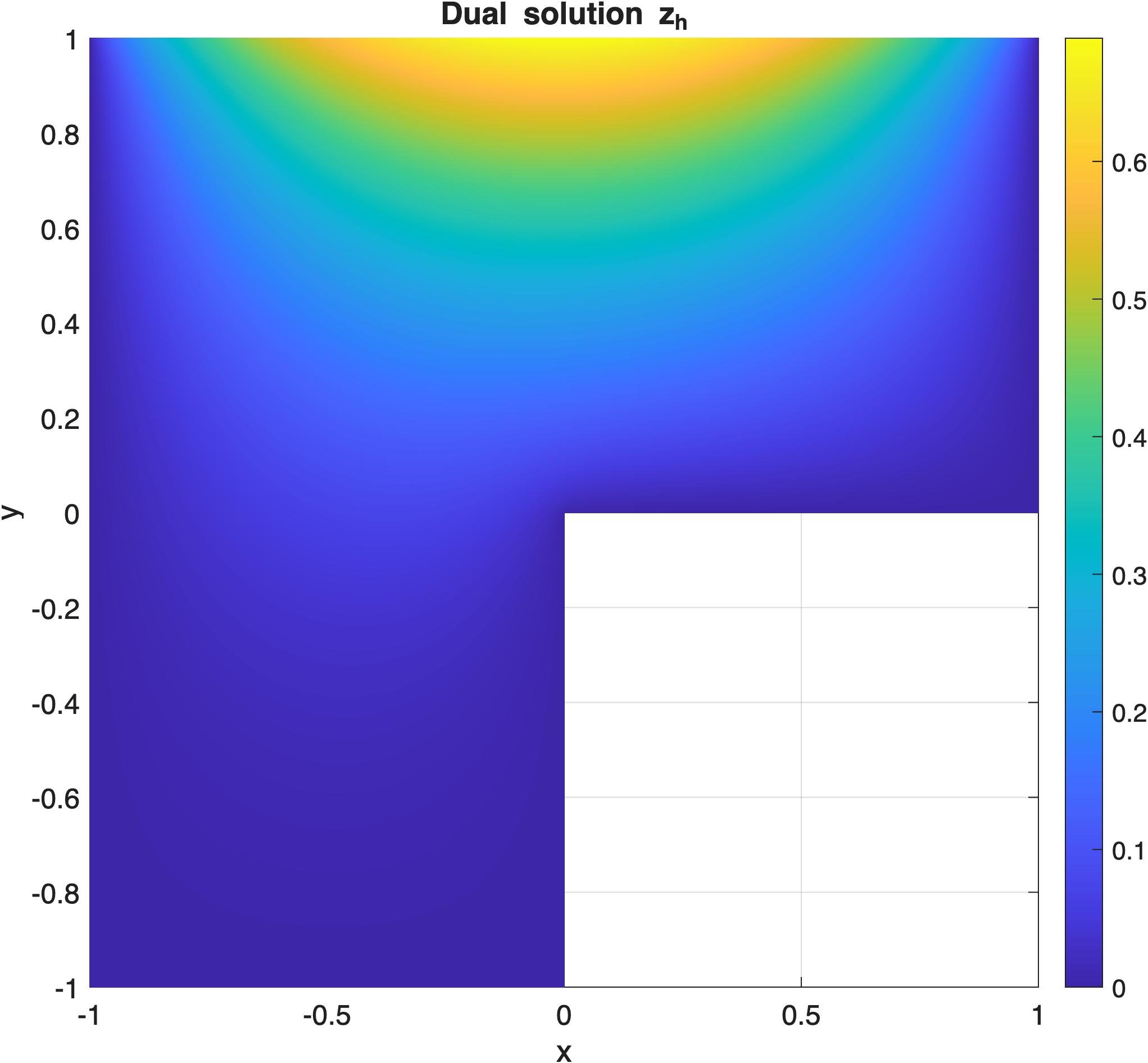}\\
  \smallskip{\small (a) $J_1$}\end{minipage}\hfill
\begin{minipage}[t]{0.24\textwidth}\centering
  \includegraphics[width=\textwidth]{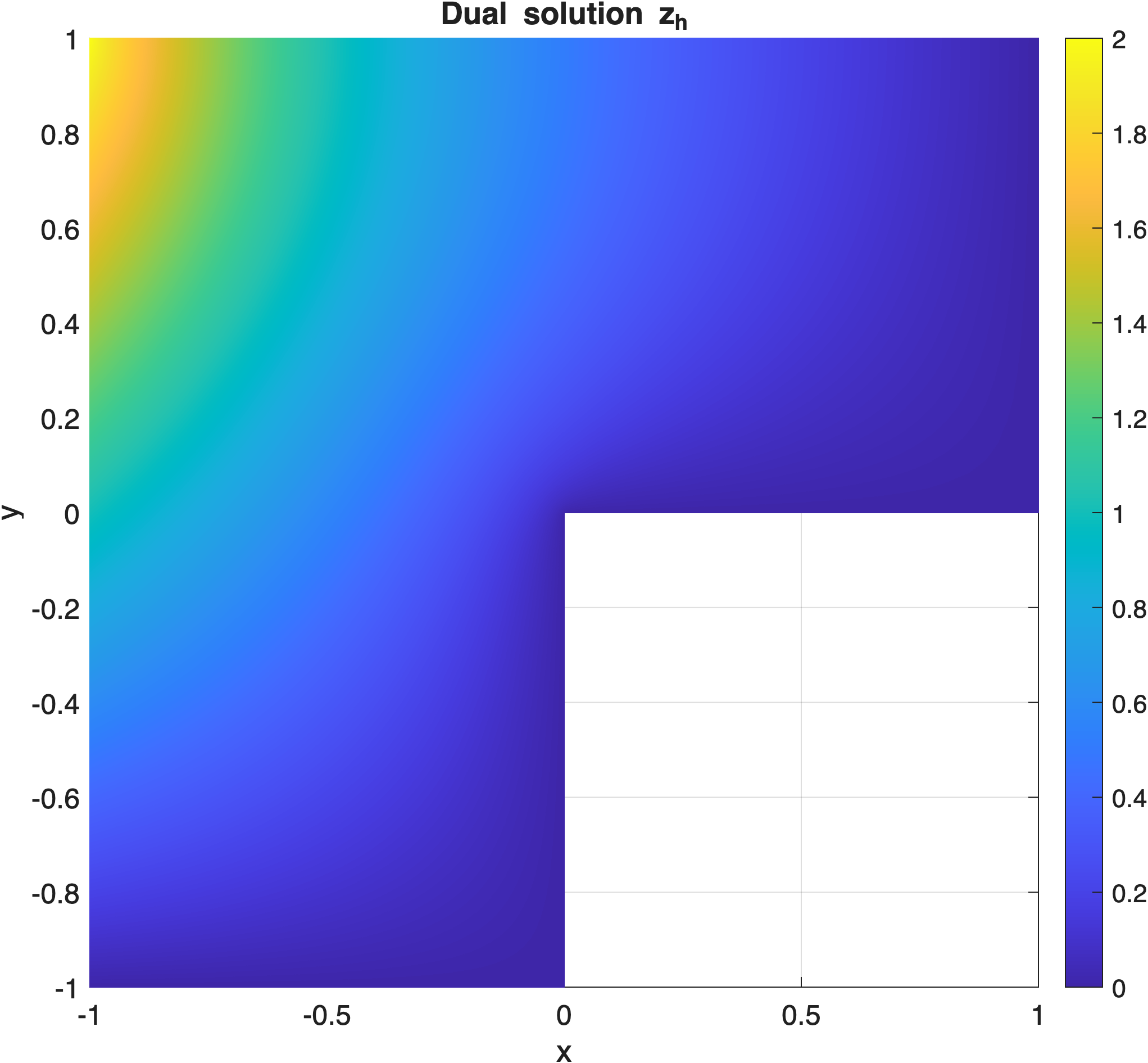}\\
  \smallskip{\small (b) $J_2$}\end{minipage}\hfill
\begin{minipage}[t]{0.24\textwidth}\centering
  \includegraphics[width=\textwidth]{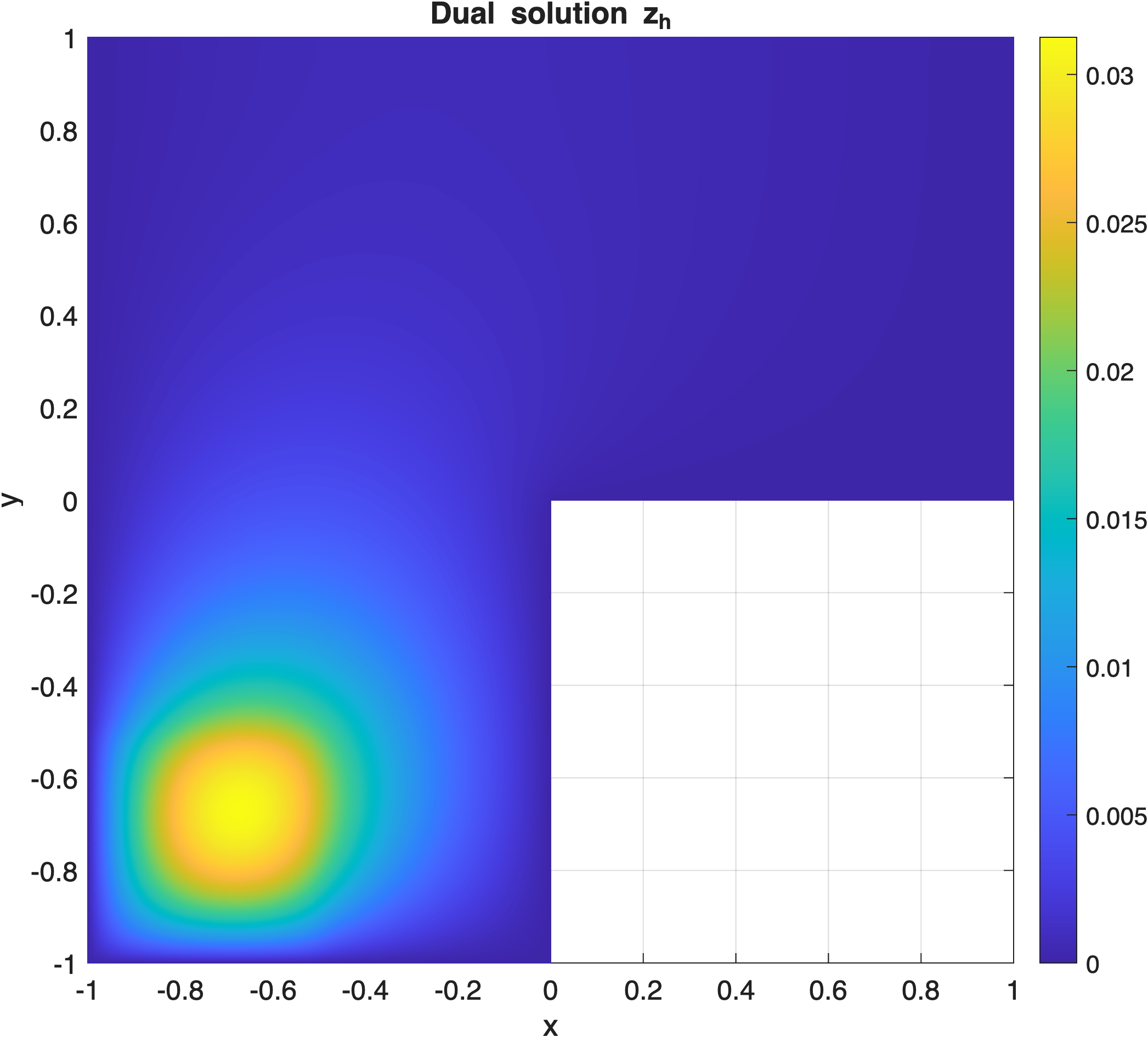}\\
  \smallskip{\small (c) $J_3$}\end{minipage}\hfill
\begin{minipage}[t]{0.24\textwidth}\centering
  \includegraphics[width=\textwidth]{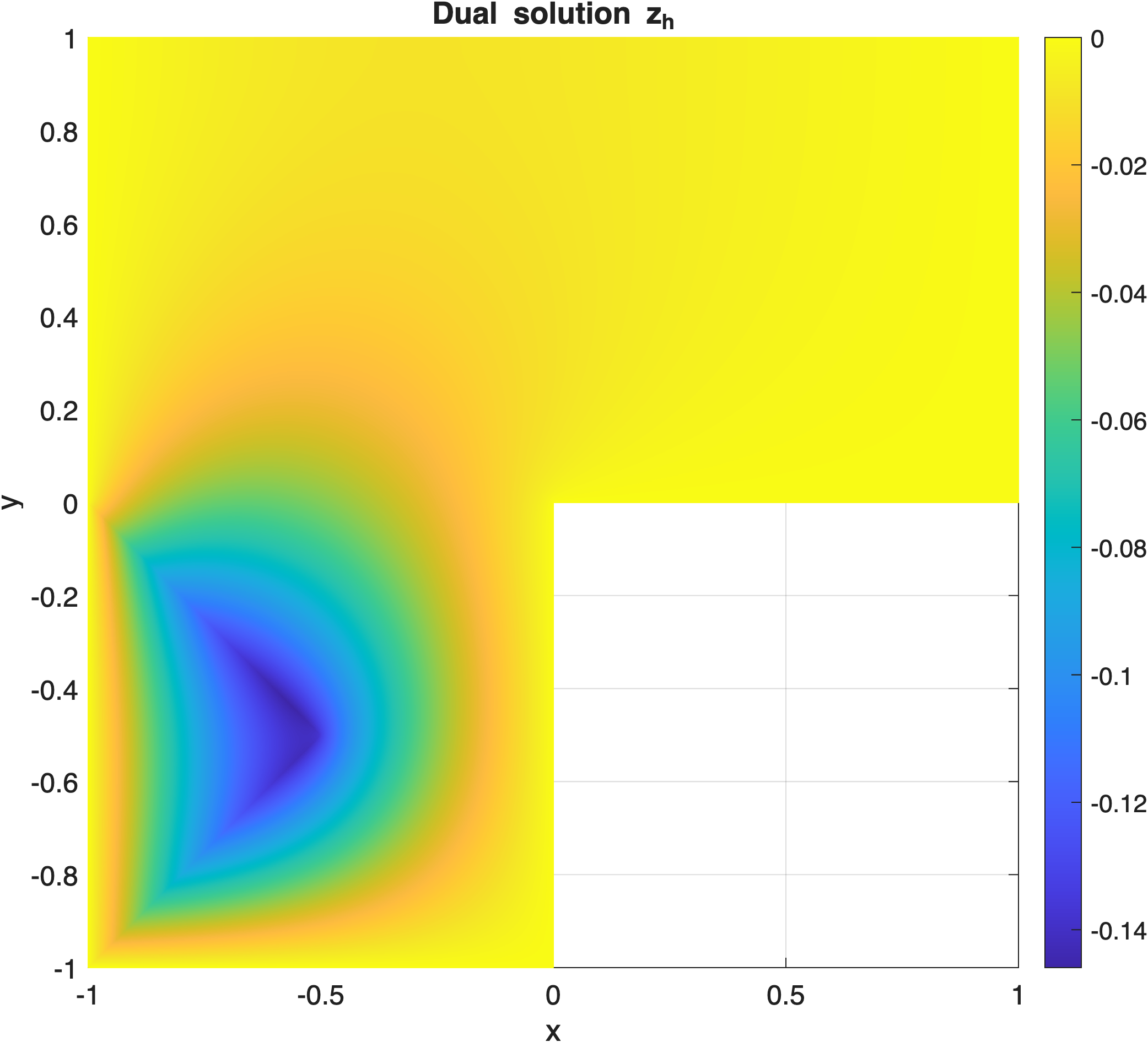}\\
  \smallskip{\small (d) $J_4$}\end{minipage}
\caption{L-shaped example: computed adjoint solutions $z_h$ for the four
quantities of interest, each induced by a different physical output.}
\label{fig:lshape-duals}
\end{figure}

\begin{figure}[htbp]
\centering
\begin{minipage}[t]{0.45\textwidth}\centering
  \includegraphics[width=\textwidth]{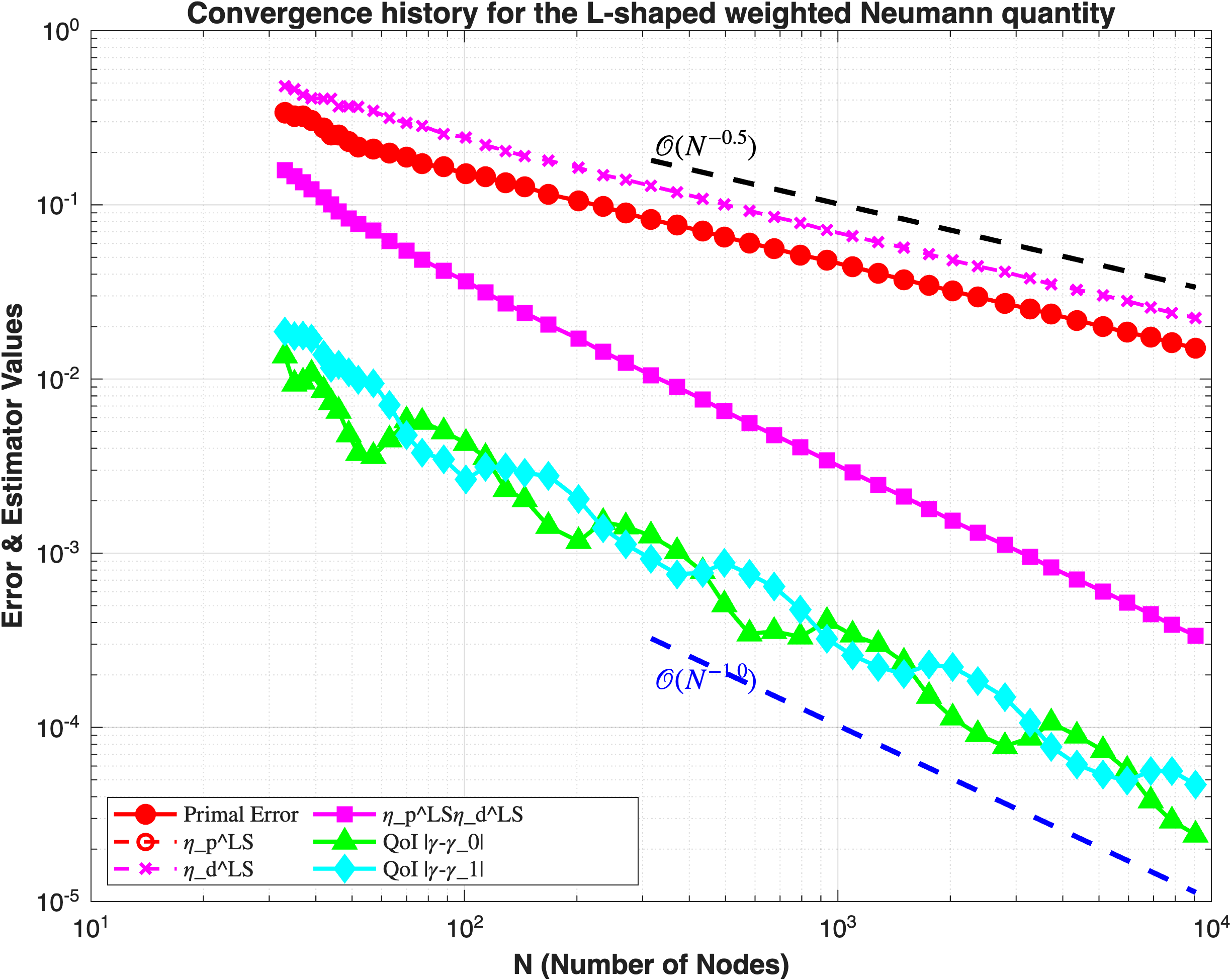}\\
  \smallskip{\small (a) $J_1$}\end{minipage}\hfill
\begin{minipage}[t]{0.45\textwidth}\centering
  \includegraphics[width=\textwidth]{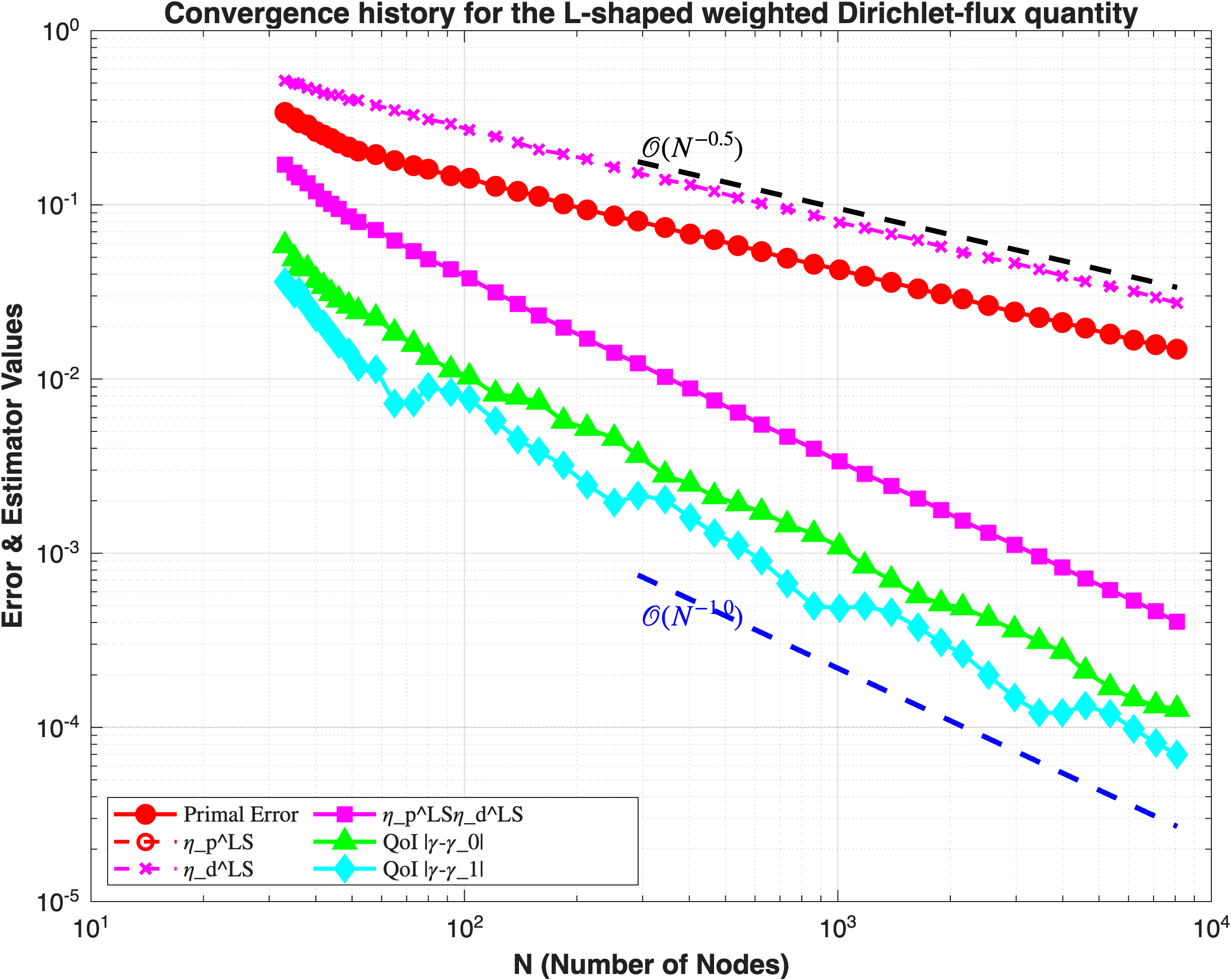}\\
  \smallskip{\small (b) $J_2$}\end{minipage}

\medskip
\begin{minipage}[t]{0.45\textwidth}\centering
  \includegraphics[width=\textwidth]{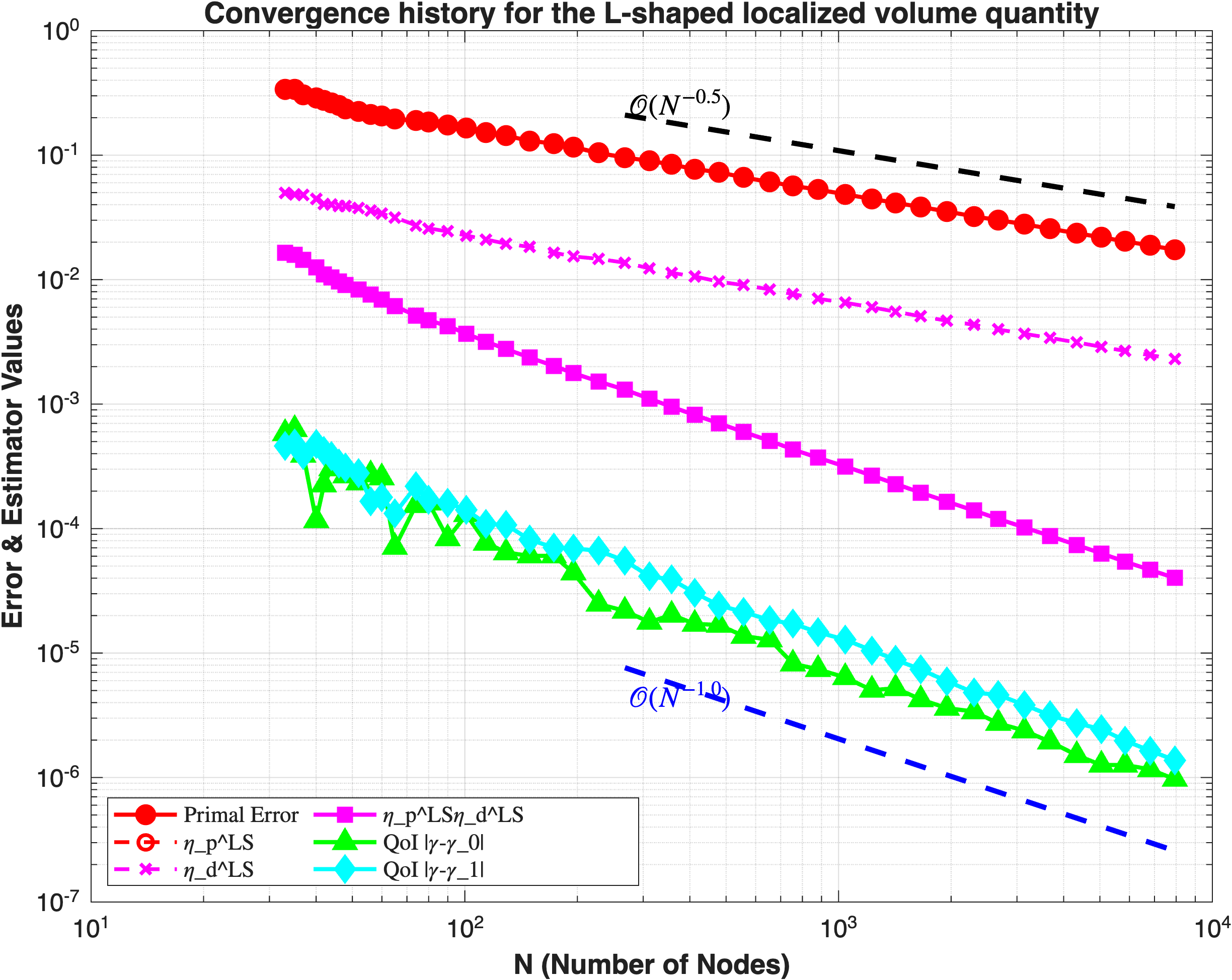}\\
  \smallskip{\small (c) $J_3$}\end{minipage}\hfill
\begin{minipage}[t]{0.45\textwidth}\centering
  \includegraphics[width=\textwidth]{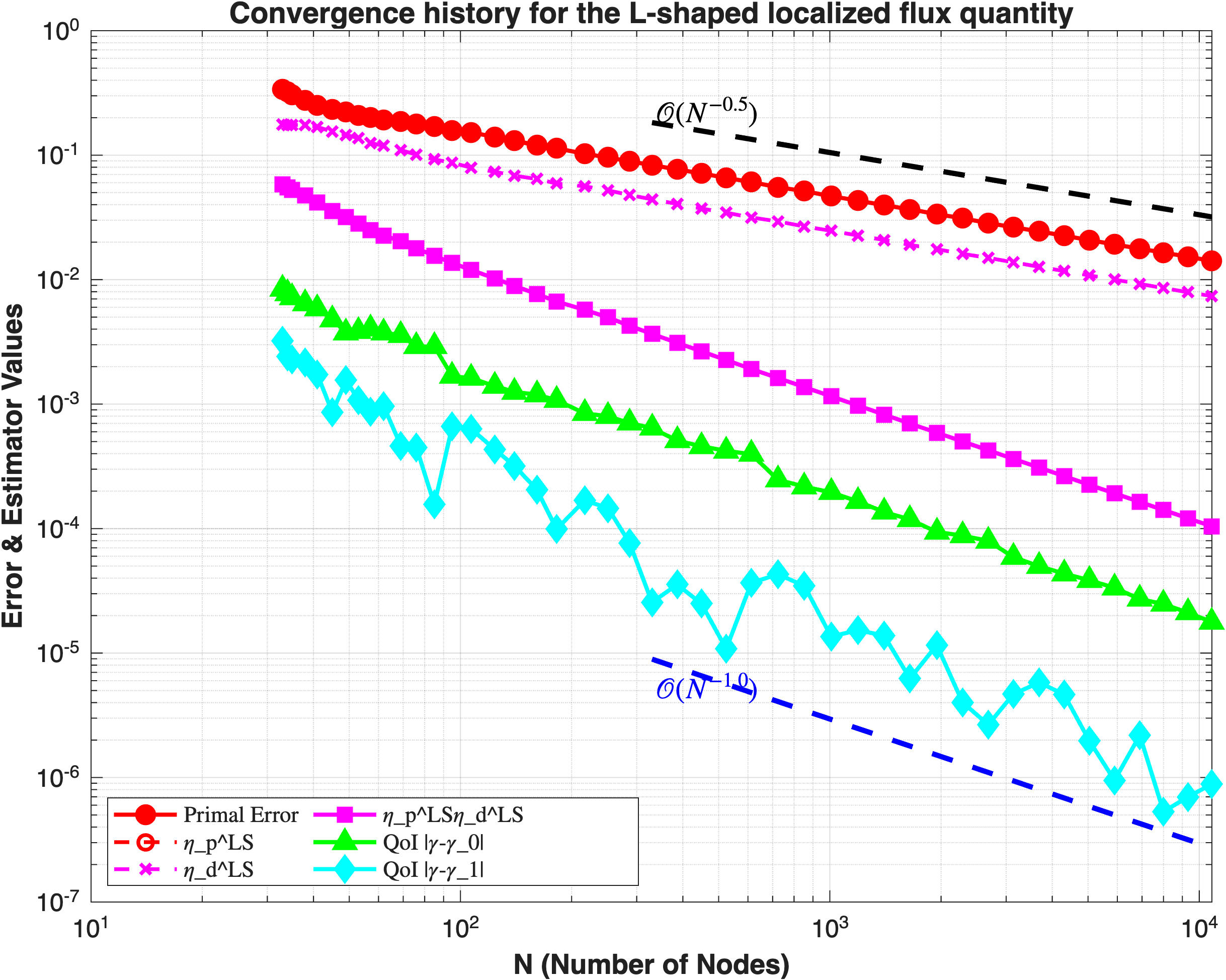}\\
  \smallskip{\small (d) $J_4$}\end{minipage}
\caption{L-shaped example: adaptive convergence histories for the four
quantities of interest. Each panel shows the primal error, the primal and
adjoint least-squares estimators, their product, and the corrected output
errors $|\gamma-\gamma_{0,h}|$, $|\gamma-\gamma_{1,h}|$ against $N$; dashed
lines mark $O(N^{-1/2})$ and $O(N^{-1})$.}
\label{fig:lshape-convergence}
\end{figure}

\section{Concluding Remarks}
\label{sec:conclusions}

We have developed goal-oriented error estimation for first-order system
least-squares methods around the physical PDE adjoint. The corrected
values $\gamma_0$ and $\gamma_1$ satisfy exact product-type error
identities that follow from the continuous primal and adjoint equations
alone, with no Galerkin orthogonality; because the physical adjoint is
itself a boundary value problem with a first-order flux system, it
carries a native least-squares functional, so a built-in estimator is
available for the dual problem as well as the primal one.

Since these identities are independent of the discretization, the
framework is not confined to least-squares approximations. They also
suggest an extension to DPG \cite{DPG:acta2025,CDG:14} and other
minimum-residual schemes, provided that the corresponding residual
estimators control the primal and PDE-adjoint norms entering the
corrected identity. The framework applies in non-intrusive
settings: given conforming potential approximations produced by another
discretization, an $H(\divvr;\O)$-conforming flux recovery
\cite{CZ:10b,LZYZ:24} matching the Neumann data supplies
admissible pairs for $\gamma_1$, and the least-squares functionals
\eqref{LS_functional_primal}--\eqref{LS_functional_dual} remain reliable
and efficient estimators of the resulting flux--potential errors by
Theorems~\ref{thm:primal_LS_estimator} and~\ref{thm:dual_LS_estimator},
so the goal-oriented bounds apply verbatim to an approximation the
least-squares method did not produce. Augmented mixed methods, in which
least-squares terms are added to the mixed formulation so that the
potential is sought in $H^1(\O)$ alongside an $H(\divvr;\O)$-conforming
flux \cite{LZ:24}, supply such pairs directly, with no recovery step:
the potential is conforming and the flux is admissible in the sense of
Remark~\ref{rem:gamma1_hypotheses}, so both corrected values and the
least-squares functional estimators are available unchanged. Neural network approximations fit the same pattern: deep least-squares methods \cite{CCLL:20} minimize the functionals \eqref{LS_functional_primal}--\eqref{LS_functional_dual} over classes of network functions, and since the identities of Section~\ref{sec:qoi-error-identities} and Theorems~\ref{thm:primal_LS_estimator} and~\ref{thm:dual_LS_estimator} concern arbitrary admissible pairs rather than a linear approximation space, they apply to such approximations unchanged, provided the essential boundary conditions are imposed exactly.

\appendix
\section{Expansion of \texorpdfstring{$\gamma-\widetilde\gamma_1$}{gamma - gamma1-tilde}}
\label{app:expansion}

We prove the expansion \eqref{tilde_gamma1_error_expansion}. Write
$E_p=\bsigma-\btau$, $e_p=u-w$, $E_d=\br-\brho$, $e_d=z-v$, so that
$\btau=\bsigma-E_p$, $w=u-e_p$, $\brho=\br-E_d$, $v=z-e_d$. The map
$\widetilde\gamma_1$ is affine in $(\btau,w)$ and in $(\brho,v)$, and by the
exact-pair identities $\widetilde m(\bsigma,u)=m(u)$,
$\widetilde\ell(\br,z)=\ell(z)$,
$\widetilde a((\bsigma,u),(\br,z))=a(u,z)$ together with
\eqref{basic_gamma_representation}, $\widetilde\gamma_1(u,\bsigma,z,\br)=\gamma$.
Hence
\begin{equation}\label{app:split}
\gamma-\widetilde\gamma_1
=\bigl[\widetilde m(\bsigma,u)-\widetilde m(\btau,w)\bigr]
+\bigl[\widetilde\ell(\br,z)-\widetilde\ell(\brho,v)\bigr]
-\bigl[\widetilde a((\bsigma,u),(\br,z))-\widetilde a((\btau,w),(\brho,v))\bigr].
\end{equation}

\emph{The three differences.} From \eqref{def_tilde_m}--\eqref{def_tilde_a},
using bilinearity and cancelling the constant parts,
\begin{align}
\widetilde m(\bsigma,u)-\widetilde m(\btau,w)
&=(g_1,e_p)+(A^{-1}E_p,\bg_2)-\langle\psi_d,e_p\rangle_{\G_N},
\label{app:Im}\\
\widetilde\ell(\br,z)-\widetilde\ell(\brho,v)
&=(f_1,e_d)+\bigl(\bff_2,A^{-1}(E_d+\bb e_d)\bigr)-\langle\psi_p,e_d\rangle_{\G_N},
\label{app:Il}\\
\widetilde a((\bsigma,u),(\br,z))-\widetilde a((\btau,w),(\brho,v))
&=\bigl(A^{-1}(\bsigma+\bff_2),E_d\bigr)
 +\bigl(A^{-1}E_p,\br+\bg_2\bigr)
 -(A^{-1}E_p,E_d)\nonumber\\
&\quad+(cu,e_d)+(ce_p,z)-(ce_p,e_d).
\label{app:Ia}
\end{align}

\emph{Volume and Neumann data.} Since $e_p,e_d\in H_D^1(\O)$, the adjoint and
primal weak equations \eqref{weak_dual}, \eqref{weak} give
\[
(g_1,e_p)-\langle\psi_d,e_p\rangle_{\G_N}=a(e_p,z)+(\bg_2,\nabla e_p),
\qquad
(f_1,e_d)-\langle\psi_p,e_d\rangle_{\G_N}=a(u,e_d)+(\bff_2,\nabla e_d).
\]
Using the exact constitutive relations $A\nabla u=-(\bsigma+\bff_2)$ and
$A\nabla z=-(\br+\bb z+\bg_2)$ (pointwise), one finds
\[
a(e_p,z)=-(\nabla e_p,\br+\bg_2)+(ce_p,z),
\qquad
a(u,e_d)=-(\bsigma+\bff_2,\nabla e_d)+(\bb\cdot\nabla u,e_d)+(cu,e_d),
\]
where the $\bb z$ contribution cancels in the first identity. Substituting
these into \eqref{app:Im}--\eqref{app:Il}, the $\bg_2$- and $\bff_2$-gradient
terms cancel pairwise.

\emph{Reaction terms.} The reaction contributions $(ce_p,z)$ and
$(cu,e_d)$ just produced cancel against the corresponding terms of
\eqref{app:Ia}, which enter \eqref{app:split} with the opposite sign,
leaving
\[
(ce_p,z)+(cu,e_d)-\bigl[(cu,e_d)+(ce_p,z)-(ce_p,e_d)\bigr]=(ce_p,e_d).
\]

\emph{Collecting the flux terms.} Introduce the computable constitutive
residuals
\[
R_p^h:=\btau+A\nabla w+\bff_2,
\qquad
R_d^h:=\brho+\bg_2+A\nabla v+\bb v,
\]
which satisfy $R_p^h=-E_p-A\nabla e_p$ and $R_d^h=-E_d-A\nabla e_d-\bb e_d$.
Expressing $\nabla e_p,\nabla e_d$ through these residuals and substituting the
constitutive relations for $\bsigma,\br$ in the remaining terms, the flux
contributions collect to
\[
(A^{-1}E_p,E_d)+\bigl(R_p^h,A^{-1}E_d\bigr)+\bigl(A^{-1}E_p,R_d^h\bigr)
+\bigl(R_p^h,A^{-1}\brho\bigr)+\bigl(A^{-1}\btau,R_d^h\bigr).
\]
Together with the reaction term $(ce_p,e_d)$, this is exactly
\eqref{tilde_gamma1_error_expansion}. Folding the last two terms into the
definition \eqref{def_gamma1} of $\gamma_1$ removes
$(R_p^h,A^{-1}\brho)+(A^{-1}\btau,R_d^h)$ and yields the identity
\eqref{second_error_identity}. \qed

\section{Reference values for the numerical experiments}
\label{app:reference-values}

We record how the reference values $\gamma_{\rm ref}$ used in
Section~\ref{sec:numerical} are obtained, so that the reported output
errors are independently reproducible.

\subsection*{B.1 The smooth interface problem}

Both representations
\[
\cJ_p(u)=(g_1,u)_\O-(\bg_2,\nabla u)_\O-\langle\psi_d,u\rangle_{\G_N}
+\langle\bsigma\cdot\bn,\phi_d\rangle_{\G_D},
\qquad
\cJ_d(z)=(f_1,z)_\O-(\bff_2,\nabla z)_\O-\langle\psi_p,z\rangle_{\G_N}
+\langle\br\cdot\bn,\phi_p\rangle_{\G_D},
\]
are evaluated at the exact fields of
Section~\ref{subsec:smooth-interface-example}. Since $A$ and the
corrections $\bff_2,\bg_2$ are piecewise smooth with jumps across
$x+y=\pm1$, each volume integral is split into the three subdomains
$\O_0,\O_1,\O_2$ and each boundary integral at the points where
$\G_D,\G_N$ meet an interface; on each piece the integrand is analytic and
a tensor-product Gauss--Legendre rule is used. The two representations
then agree to
\[
\gamma_{\rm ref}=\cJ_p(u)=\cJ_d(z)=5.058351722069 ,
\]
which also provides a consistency check on the volume terms, the mixed
boundary terms, the interface corrections, and the primal--adjoint sign
conventions. An equivalent evaluation through
$\gamma=m(u)+\ell(z)-a(u,z)$, which avoids the Dirichlet-boundary flux
pairing altogether, reproduces the same value.

\subsection*{B.2 The benchmark with separated singularities}

On $\O=(-1,1)^2$ the $L^2$-orthonormal Dirichlet eigenfunctions are
\[
\phi_{nm}(x,y)=\sin\Bigl(\frac{n\pi(x+1)}2\Bigr)
               \sin\Bigl(\frac{m\pi(y+1)}2\Bigr),
\qquad
-\Delta\phi_{nm}=\lambda_{nm}\phi_{nm},
\qquad
\lambda_{nm}=\frac{\pi^2}{4}(n^2+m^2).
\]
Put $F_{nm}:=\int_{T_p}\partial_x\phi_{nm}\,dx$ and
$G_{nm}:=\int_{T_d}\partial_x\phi_{nm}\,dx$. Testing the two weak
equations with $\phi_{nm}$ gives
$\lambda_{nm}u_{nm}=-F_{nm}$ and $\lambda_{nm}z_{nm}=-G_{nm}$ for the
expansion coefficients of $u$ and $z$, whence
\[
\gamma=(\nabla u,\nabla z)_\O
=\sum_{n,m\ge1}\lambda_{nm}u_{nm}z_{nm}
=\sum_{n,m\ge1}\frac{F_{nm}G_{nm}}{\lambda_{nm}} .
\]
The two active triangles are related by the point reflection $T_d=-T_p$,
and $\phi_{nm}(-x,-y)=(-1)^{n+m}\phi_{nm}(x,y)$ gives
$G_{nm}=(-1)^{n+m+1}F_{nm}$. Hence
\begin{equation}\label{app:ms_series}
\gamma=\frac{4}{\pi^2}\sum_{n=1}^{\infty}\sum_{m=1}^{\infty}
(-1)^{n+m+1}\,\frac{F_{nm}^2}{n^2+m^2}.
\end{equation}
Since $\phi_{nm}$ vanishes on the outer edges $x=1$ and $y=1$ of $T_p$,
the divergence theorem applied to the constant field $(1,0)^{\mathsf T}$
reduces $F_{nm}$ to a single edge integral, and a direct trigonometric
evaluation gives
\[
F_{nm}=-\frac{2(-1)^m}{\pi(n^2-m^2)}
\Bigl[(-1)^n n\sin\Bigl(\frac{m\pi}2\Bigr)
      +m\sin\Bigl(\frac{n\pi}2\Bigr)\Bigr]
\quad (n\neq m),
\qquad
F_{nn}=
\begin{cases}
\tfrac12(-1)^{n/2}, & n\ \text{even},\\[1mm]
-\tfrac1{n\pi}\sin\bigl(\tfrac{n\pi}2\bigr), & n\ \text{odd}.
\end{cases}
\]
The square partial sums of \eqref{app:ms_series} numerically exhibit
$O(M^{-2})$ convergence; evaluating them in high-precision arithmetic and
extrapolating in $M$ gives
\[
\gamma_{\rm ref}=-0.006340363264 ,
\]
consistent with an independent computation on highly refined meshes
aligned with $\partial T_p$ and $\partial T_d$.

\subsection*{B.3 The convection--diffusion problem}

Here $\bff_2=\bg_2=\mathbf 0$ and $\G_N=\emptyset$, so
$\gamma=(g_1,u)_\O+(f_1,z)_\O-a(u,z)$ with $u$ and $z$ the exact fields of
Section~\ref{subsec:convection-diffusion}. All three terms are analytic on
$\O=(0,1)^2$ and are evaluated by tensor-product Gauss--Legendre
quadrature, giving $\gamma=0.1052871614333$.

\subsection*{B.4 The four L-shaped quantities of interest}

With $u_{\rm ex}=r^{2/3}\sin(2\theta/3)=\operatorname{Im}(x+\mathrm iy)^{2/3}$
and $\bsigma=-\nabla u_{\rm ex}$, each output of
Section~\ref{subsec:lshape} reduces to elementary integrals of
$\zeta^{2/3}$ or $\zeta^{-1/3}$ along the relevant edges or over the
relevant region, evaluated with the antiderivatives
$\tfrac35\zeta^{5/3}$, $\tfrac32\zeta^{2/3}$ and
$-\tfrac{9\mathrm i}{40}\zeta^{8/3}$. This gives the exact output values
\[
J_1(u_{\rm ex})=\frac{3\cdot2^{1/3}}{10}\bigl(3+\sqrt3\bigr),
\quad
J_2(u_{\rm ex})=-\frac{3\cdot2^{1/3}}{10}\bigl(1+\sqrt3\bigr),
\quad
J_4(u_{\rm ex})=\frac{3}{40}
\Bigl[-2+2^{2/3}+\sqrt3\bigl(4\cdot2^{1/3}-2-2^{2/3}\bigr)\Bigr],
\]
and, evaluating $-\tfrac{9\mathrm i}{40}\zeta^{8/3}$ at the four corners of
$\omega_0$,
\[
J_3(u_{\rm ex})=-\frac{9}{40}\operatorname{Re}
\Bigl[\bigl(-\tfrac12-\tfrac{\mathrm i}2\bigr)^{8/3}
     -\bigl(-1-\tfrac{\mathrm i}2\bigr)^{8/3}
     -\bigl(-\tfrac12-\mathrm i\bigr)^{8/3}
     +\bigl(-1-\mathrm i\bigr)^{8/3}\Bigr].
\]
Their decimal values are those of Table~\ref{tab:lshape-qoi}. 

\bibliographystyle{plain}
\bibliography{../bib/szhang}

\begin{thebibliography}{10}

\bibitem{BR:03}
Wolfgang Bangerth and Rolf Rannacher.
\newblock {\em Adaptive finite element methods for differential equations}.
\newblock Birkhauser, Basel, 2003.

\bibitem{BET:11}
Roland Becker, Elodie Estecahandy, and David Trujillo.
\newblock Weighted marking for goal-oriented adaptive finite element methods.
\newblock {\em SIAM Journal on Numerical Analysis}, 49(6):2451--2469, 2011.

\bibitem{BR:01}
Roland Becker and Rolf Rannacher.
\newblock An optimal control approach to a posteriori error estimation in
  finite element methods.
\newblock {\em Acta Numerica}, 10:1--102, 2001.

\bibitem{BG:09}
Pavel~B. Bochev and Max~D Gunzburger.
\newblock {\em {Least-Squares Finite Element Methods}}.
\newblock Applied Mathematical Sciences, 166. Springer, 2009.

\bibitem{BLP:97}
James~H. Bramble, Raytcho~D. Lazarov, and Joseph~E. Pasciak.
\newblock A least-squares approach based on a discrete minus one inner product
  for first order systems.
\newblock {\em Mathematics of Computation}, 66(219):935--955, 1997.

\bibitem{Bringmann:23}
Philipp Bringmann.
\newblock Review and computational comparison of adaptive least-squares finite
  element schemes.
\newblock {\em Computers \& Mathematics with Applications}, 172:1--15, 2024.

\bibitem{Cai:04}
Zhiqiang Cai.
\newblock Least-squares method.
\newblock Unpublished lecture notes of Purdue University, 2004.

\bibitem{CCLL:20}
Zhiqiang Cai, J.~Chen, M.~Liu, and X.~Liu.
\newblock Deep least-squares methods: an unsupervised learning-based numerical
  method for solving elliptic pdes.
\newblock {\em Journal of Computational Physics}, 420(109707), 2020.

\bibitem{CFZ:15}
Zhiqiang Cai, Rob Falgout, and Shun Zhang.
\newblock Div first-order system {LL* (FOSLL*)} least-squares for second-order
  elliptic partial differential equations.
\newblock {\em SIAM J. Numer. Anal.}, 53(1):405--420, 2015.

\bibitem{CLMM:94}
Zhiqiang Cai, Raytcho~D. Lazarov, Thomas~A. Manteuffel, and Stephen~F.
  McCormick.
\newblock {First order system least-squares for second order partial
  differential equations: Part I}.
\newblock {\em SIAM J. Numer. Anal.}, 31:1785--1799, 1994.

\bibitem{CMM:97}
Zhiqiang Cai, Tom Manteuffel, and Stephen~F. McCormick.
\newblock First-order system least squares for second-order partial
  differential equations: Part ii.
\newblock {\em SIAM J. Numer. Anal.}, 34(2):425--454, 1997.

\bibitem{CS:04}
Zhiqiang Cai and Gerhard Starke.
\newblock Least-squares methods for linear elasticity.
\newblock {\em SIAM J. Numer. Anal.}, 42:826--842, 2004.

\bibitem{CZ:10b}
Zhiqiang Cai and Shun Zhang.
\newblock Flux recovery and a posteriori error estimators: conforming elements
  for scalar elliptic equations.
\newblock {\em SIAM J. Numer. Anal.}, 48(2):578--602, 2010.

\bibitem{CDG:14}
Carsten Carstensen, Leszek Demkowicz, and Jay Gopalakrishnan.
\newblock A posteriori error control for {DPG} methods.
\newblock {\em SIAM J. Numer. Anal.}, 52(3):1335--1353, 2014.

\bibitem{LSQoI:14}
Jehanzeb~Hameed Chaudhry, Eric~C. Cyr, Kuo Liu, Thomas~A. Manteuffel, Luke~N.
  Olson, and Lei Tang.
\newblock Enhancing least-squares finite element methods through a
  quantity-of-interest.
\newblock {\em SIAM J. Numer. Anal.}, 52:3085--3105, 2014.

\bibitem{DPG:acta2025}
Leszek Demkowicz and Jay Gopalakrishnan.
\newblock The discontinuous {P}etrov--{G}alerkin method.
\newblock {\em Acta Numerica}, 34:293--384, 2025.

\bibitem{DGK:00}
Leszek Demkowicz, Jay Gopalakrishnan, and Brendan Keith.
\newblock The {DPG}-star method.
\newblock {\em Computers \& Mathematics with Applications}, 79:3092--3116,
  2020.

\bibitem{FPV:16}
Michael Feischl, Dirk Praetorius, and Kristoffer G~Van der Zee.
\newblock An abstract analysis of optimal goal-oriented adaptivity.
\newblock {\em SIAM J. Numer. Anal.}, 54(3):1423--1448, 2016.

\bibitem{GS:02}
Michael~B. Giles and Endre S{\"u}li.
\newblock Adjoint methods for pdes: a posteriori error analysis and
  postprocessing by duality.
\newblock {\em Acta Numerica}, 11:145--236, 2002.

\bibitem{Jiang:98}
Bo-nan Jiang.
\newblock {\em The Least-Squares Finite Element Method Theory and Applications
  in Computational Fluid Dynamics and Electromagnetics}.
\newblock Scientific Computation. Springer, 1998.

\bibitem{KAD:19}
Brendan Keith, Ali~Vaziri Astaneh, and Leszek~F Demkowicz.
\newblock Goal-oriented adaptive mesh refinement for {Discontinuous
  Petrov--Galerkin} methods.
\newblock {\em SIAM J. Numer. Anal.}, 57(4):1649--1676, 2019.

\bibitem{Ku:07}
JaEun Ku.
\newblock A remark on the coercivity for a first-order least-squares method.
\newblock {\em Numer. Methods Partial Differential Equations},
  23(6):1577--1581, 2007.

\bibitem{LZYZ:24}
Ziyan Li and Shun Zhang.
\newblock Non-intrusive least-squares functional a posteriori error estimator:
  Linear and nonlinear problems with plain convergence.
\newblock {\em Computers and Mathematics with Applications}, 191:275--295,
  2025.

\bibitem{LZ:24}
Yuxiang Liang and Shun Zhang.
\newblock Least-squares versus partial least-squares finite element methods:
  Robust a priori and a posteriori error estimates of augmented mixed finite
  element methods.
\newblock {\em Computers and Mathematics with Applications}, 184:45--64, 2025.

\bibitem{LZ:19}
Qunjie Liu and Shun Zhang.
\newblock Adaptive flux-only least-squares finite element methods for linear
  transport equations.
\newblock {\em Journal of Scientific Computing}, 84(2):26, 2020.

\bibitem{LZ:18}
Qunjie Liu and Shun Zhang.
\newblock Adaptive least-squares finite element methods for linear transport
  equations based on an {H(div)} flux reformulation.
\newblock {\em Comput. Methods Appl. Mech. Engrg.}, 366:113041, 2020.

\bibitem{MS:09}
Mario~S. Mommer and Rob Stevenson.
\newblock A goal-oriented adaptive finite element method with convergence
  rates.
\newblock {\em SIAM J. Numer. Anal.}, 47:861--886, 2009.

\bibitem{PG:00}
Niles~A Pierce and Michael~B Giles.
\newblock Adjoint recovery of superconvergent functionals from pde
  approximations.
\newblock {\em SIAM Review}, 42(2):247--264, 2000.

\bibitem{QZ:20}
Weifeng Qiu and Shun Zhang.
\newblock Adaptive first-order system least-squares finite element methods for
  second order elliptic equations in non-divergence form.
\newblock {\em SIAM J. Numer. Anal.}, 58(6):3286--3308, 2020.

\bibitem{Zhang:23}
Shun Zhang.
\newblock A simple proof of coerciveness of first-order system least-squares
  methods for general second-order elliptic {PDE}s.
\newblock {\em Computers \& Mathematics with Applications}, 130:98--104, 2023.

\end{thebibliography}
\end{document}